\theoremstyle{plain}
\newtheorem{lem}{Lemma}[section]
\newtheorem{theo}[lem]{Theorem}
\newtheorem{prop}[lem]{Proposition}
\newtheorem{cor}[lem]{Corollary}
\newtheorem{remark}[lem]{Remark}
\newtheorem{conjecture}[lem]{Conjecture}
\font\k=cmr7
  \newcommand {\cu}{\mbox{\k cus}}
  \newcommand {\di}{\mbox{\k dis}}
  \newcommand {\red}{\mbox{\k red}}
  \newcommand{\unip}{\operatorname{unip}}
  \newcommand {\sico}{\mbox{\k sc}}
  \newcommand {\reg}{\text{reg}}
  \newcommand {\res}{\mbox{\k res}}
  \newcommand {\spec}{\text{spec}}
  \newcommand {\geo}{\mbox{\k geom}}
  \newcommand {\temp}{\mbox{\k temp}}
  \newcommand {\C}{{\mathbb C}}
  \newcommand {\bH}{{\mathbb H}}
  \newcommand {\N}{{\mathbb N}}
  \newcommand {\R}{{\mathbb R}}
  \newcommand {\Z}{{\mathbb Z}}
  \newcommand {\Q}{{\mathbb Q}}
  \newcommand {\A}{{\mathbb A}}
  \newcommand{\bG}{G}
  \newcommand {\af}{{\mathfrak a}}
  \newcommand {\gf}{{\mathfrak g}}
  \newcommand {\mf}{{\mathfrak m}}
  \newcommand {\kf}{{\mathfrak k}}
  \newcommand {\of}{{\mathfrak o}}
  \newcommand {\nf}{{\mathfrak n}}
  \newcommand {\cf}{{\mathfrak c}}
 \newcommand{\ho}{{\mathfrak o}}
\renewcommand {\H}{{\mathcal H}}
  \newcommand {\M}{{\mathcal M}}
  \newcommand {\Co}{{\mathcal C}}
 \newcommand {\cO}{{\mathcal O}}
\newcommand {\cF}{{\mathcal F}}
 \newcommand {\ccC}{{\mathscr C}}
\newcommand {\bbG}{\bf G}
\newcommand {\G}{G}
\newcommand {\CmL}{{\mathcal L}}
  \newcommand {\cE}{{\mathcal E}}
  \newcommand {\cH}{{\mathcal H}}
 \newcommand {\cP}{{\mathcal P}}
 \newcommand {\cL}{{\mathcal L}}
 \newcommand {\cA}{{\mathcal A}}
\newcommand  {\cZ}{{\mathcal Z}}
\newcommand {\bs}{\backslash}
 \newcommand {\ov}{\overline}
\newcommand{\cU}{{\mathcal U}}
\newcommand{\levis}{{\mathcal L}}
\newcommand{\Ai}{A_M}
\newcommand{\Ag}{A_G}
\newcommand{\AG}{A_G}
\renewcommand{\Im}{\operatorname{Im}}
\renewcommand{\Re}{\operatorname{Re}}
\newcommand{\Tr}{\operatorname{Tr}}
\newcommand{\End}{\operatorname{End}}
\newcommand{\tr}{\operatorname{tr}}
\newcommand{\Id}{\operatorname{Id}}
\newcommand{\Hom}{\operatorname{Hom}}
\newcommand{\Ind}{\operatorname{Ind}}
\newcommand{\rk}{\operatorname{rank}}
\newcommand{\vol}{\operatorname{vol}}
\newcommand{\Res}{\operatorname{Res}}
\newcommand{\SL}{\operatorname{SL}}
\newcommand{\GL}{\operatorname{GL}}
\newcommand{\SO}{\operatorname{SO}}
\newcommand{\PSL}{\operatorname{PSL}}
\renewcommand{\det}{\operatorname{det}}
\newcommand{\norm}[1]{\lVert#1\rVert}
\newcommand{\abs}[1]{\lvert#1\rvert}
\newcommand{\one}{\mathbf 1}
\newcommand{\oP}{\overline P}
\newcommand{\Lie}{\operatorname{Lie}}
\newcommand{\aaa}{\mathfrak{a}}
  \newcommand {\K}{{\bf K}}
  \newcommand{\Ht}{H}
\newcommand{\sprod}[2]{\left\langle#1,#2\right\rangle}
\newcommand{\PPP}{\mathcal{P}}
\newcommand{\FFF}{{\mathcal F}}
\newcommand{\rts}{\Sigma}
\newcommand{\disc}{\operatorname{disc}}
\newcommand{\srts}{\Delta}
\newcommand{\modulus}{\delta}
\newcommand{\AF}{{\mathcal A}}
\newcommand{\zzz}{\mathfrak{Z}}
\newcommand{\iii}{{\mathrm i}}
\newcommand{\LieG}{\mathfrak{g}}
\newcommand{\bases}{\mathfrak{B}}
\newcommand{\bss}{\underline{\beta}}
\newcommand{\dtup}{\mathcal{X}}
\newcommand{\card}[1]{\lvert#1\rvert}
\newcommand{\CmP}{\mathcal{P}}
\newcommand{\CmU}{\mathcal{U}}
\newcommand{\CmO}{\mathcal{O}}
\newcommand{\cpt}{\mathbf{K}}
\newcommand{\One}{\mathbf{1}}
\newcommand{\level}{\operatorname{level}}
\begin{document}

\title[]
{The Weyl law for congruence subgroups  and arbitrary
$K_\infty$-types}
\date{\today}

\author{Werner M\"uller}
\address{Universit\"at Bonn\\
Mathematisches Institut\\
Endenicher Allee 60\\
D -- 53115 Bonn, Germany}
\email{mueller@math.uni-bonn.de}

\keywords{Weyl law, locally symmetric spaces}
\subjclass{Primary: 11F72, Secondary: 58J35}


\begin{abstract}
Let $G$ be a reductive algebraic group over $\Q$  and 
$\Gamma\subset G(\Q)$ an
arithmetic subgroup. Let $K_\infty\subset G(\R)$ be a maximal compact subgroup.
We study the asymptotic behavior of the counting functions  of the cuspidal
and residual spectrum, respectively, of the regular representation of $G(\R)$ in
$L^2(\Gamma\bs G(\R))$ of a fixed $K_\infty$-type $\sigma$.
A conjecture, which is due to Sarnak, states that the counting function of
the cuspidal spectrum of type $\sigma$  satisfies Weyl's law  and
the residual spectrum is of lower order growth. Using the Arthur trace formula
we reduce the conjecture to a problem about $L$-functions occurring in the
constant terms of Eisenstein series. If $G$ satisfies property (L),
introduced by Finis and Lapid, we establish the conjecture.
This includes classical groups over a number field.
\end{abstract}

\maketitle
\setcounter{tocdepth}{1}
\tableofcontents

\section{Introduction}
Let $G$ be a connected semisimple algebraic group over $\Q$ and
$\Gamma\subset G(\Q)$ an arithmetic subgroup, which we assume to be torsion
free. A basic problem in the theory of
automorphic forms is the study of the spectral resolution of the regular
representation $R_\Gamma$ of $G(\R)$ in $L^2(\Gamma\bs G(\R))$. Of particular
importance is the determination of the structure of the discrete spectrum. 
Let $L^2_{\di}(\Gamma\bs G(\R))$
be the discrete part of $L^2(\Gamma\bs G(\R))$, i.e., the closure of the span
of all irreducible subrepresentations of $R_\Gamma$. Denote by $R_{\Gamma,\di}$
the corresponding restriction of $R_\Gamma$. Denote by $\Pi_{\di}(G(\R))$ the
set of isomorphism classes of irreducible unitary representations of $G(\R)$,
which occur in $R_\Gamma$. By definition we have
\begin{equation}\label{reg-repr}
R_{\Gamma,\di}=\widehat\bigoplus_{\pi\in\Pi_{\di}(G(\R))}m_\Gamma(\pi)\pi,
\end{equation}
where
\[
m_\Gamma(\pi)=\dim\Hom_{G(\R)}(\pi,R_\Gamma)=\dim\Hom_{G(\R)}(\pi,R_{\Gamma,\di})
\]
is the multiplicity with which $\pi$ occurs in $R_\Gamma$.
Apart from special cases, as for example discrete series representations, one
cannot hope to describe the multiplicity function $m_\Gamma$ on $\Pi(G(\R))$
explicitly. Therefor it is feasible to study asymptotic questions such as
the limit multiplicity problem \cite{FLM2} and the Weyl law, which is the
subject of this article.

To begin with we recall that the discrete spectrum decomposes into the cuspidal
and the residual spectrum. Let $\K_\infty$ be a maximal compact subgroup of
$G(\R)$. Let $\cZ(\gf_\C)$ be the center of the
universal enveloping algebra of the complexification of the Lie algebra $\gf$
of $G(\R)$. Recall that a cusp form for $\Gamma$ is a smooth and right
$\K_\infty$-finite function $\phi\colon\Gamma\bs G(\R)\to \C$ which is a
simultaneous eigenfunction of $\cZ(\gf_\C)$ and which satisfies 
\begin{equation}
\int_{\Gamma\cap N_P(\R)\bs N_P(\R)}\phi(nx) dn=0
\end{equation}
for all unipotent radicals $N_P$ of proper rational parabolic subgroups $P$ of
$G$. By Langlands' theory of Eisenstein series \cite{La1},  cusp forms are the
building blocks of the spectral resolution. 
We note that each cusp form $\phi\in C^\infty(\Gamma\bs G(\R))$ is
rapidly decreasing on $\Gamma\bs G(\R)$ and hence square integrable. Let
$L^2_{\cu}(\Gamma\bs G(\R))$ be the closure of the linear span of all cusp
forms. The restriction of the regular representation $R_\Gamma$ to
$L^2_{\cu}(\Gamma\bs G(\R))$ decomposes discretely and
$L^2_{\cu}(\Gamma\bs G(\R))$ is a subspace of $L^2_{\di}(\Gamma\bs G(\R))$. Denote
by $L^2_{\res}(\Gamma\bs G(\R))$ the orthogonal complement of
$L^2_{\cu}(\Gamma\bs G(\R))$ in $L^2_{\di}(\Gamma\bs G(\R))$. This is the
residual subspace. Let $(\sigma,V_\sigma)$ be an irreducible unitary
representation of $\K_\infty$. Set
\begin{equation}\label{sigma-iso}
L^2(\Gamma\bs G(\R),\sigma):=(L^2(\Gamma\bs G(\R))\otimes V_\sigma)^{\K_\infty}.
\end{equation}
Define the subspaces $L^2_{\di}(\Gamma\bs G(\R),\sigma)$,
$L^2_{\cu}(\Gamma\bs G(\R),\sigma)$ and $L^2_{\res}(\Gamma\bs G(\R),\sigma)$ in a
similar way. Then $L^2_{\cu}(\Gamma\bs G(\R),\sigma)$ is the space of cusp
forms with fixed $\K_\infty$-type $\sigma$. 
Let $\widetilde X:=G(\R)/\K_\infty$ be the Riemannian symmetric space associated
to $G(\R)$ and $X=\Gamma\bs \widetilde X$ the corresponding locally symmetric
space. Since we assume that $\Gamma$ is torsion free, $X$ is a manifold. Let
$E_\sigma\to \Gamma\bs X$ be the locally homogeneous vector bundle
associated to $\sigma$ and let $L^2(X,E_\sigma)$ be the space
of square integrable sections of $E_\sigma$. There is a canonical isomorphism
\begin{equation}\label{homog-vect-bdl}
L^2(\Gamma\bs G(\R),\sigma)\cong L^2(X,E_\sigma).
\end{equation}
Let $\Omega_{G(\R)}\in\cZ(\gf_\C)$ be the Casimir element of $G(\R)$. Then
$-\Omega_{G(\R)}\otimes\Id$ induces a self-adjoint operator $\Delta_\sigma$ in the
Hilbert space $L^2(\Gamma\bs G(\R),\sigma)$ which is bounded from below. With
respect to the isomorphism \eqref{homog-vect-bdl} we have
\begin{equation}
\Delta_\sigma=(\nabla^\sigma)^\ast\nabla^\sigma-\lambda_\sigma\Id,
\end{equation}
where $\nabla^\sigma$ is the canonical invariant connection in $E_\sigma$ and
$\lambda_\sigma$ denotes the Casimir eigenvalue of $\sigma$. In particular,
if $\sigma_0$ is the trivial representation, then $L^2(\Gamma\bs G(\R),\sigma_0)
\cong L^2(X)$ and $\Delta_{\sigma_0}$ equals the Laplacian $\Delta$ on $X$.

The restriction of $\Delta_\sigma$ to the subspace
$L^2_{\di}(\Gamma\bs G(\R),\sigma)$ has pure point spectrum consisting of
eigenvalues $\lambda_0(\sigma)<\lambda_1(\sigma)<\cdots$ of finite
multiplicities. Let $\cE(\lambda_i(\sigma))$ be the eigenspace corresponding to
$\lambda_i(\sigma)$. Then we define the eigenvalue counting function
$N_{\Gamma,\di}(\lambda,\sigma)$, $\lambda\ge 0$, by
\begin{equation}
N_{\Gamma,\di}(\lambda,\sigma)=\sum_{\lambda_i(\sigma)\le \lambda}
\dim\cE(\lambda_i(\sigma)).
\end{equation}
The counting functions $N_{\Gamma,\cu}(\lambda,\sigma)$ and
$N_{\Gamma,\res}(\lambda,\sigma)$ of
the cuspidal and residual spectrum are defined  by considering
the restriction of $\Delta_\sigma$ to the cuspidal and residual subspace,
respectively. 
The main goal is to determine the asymptotic behavior of the counting functions
as $\lambda\to \infty$. If $X$ is compact, the Weyl law holds. 
Recall that for a compact Riemannian manifold $X$ of dimension $n$, the Weyl law
states that the number $N_X(\lambda)$ of eigenvalues $\lambda_i\le\lambda$,
counted with multiplicity,  of the Laplace operator $\Delta$ of $X$  satisfies
\begin{equation}\label{weyl-law1}
N_X(\lambda)=\frac{\vol(X)}{(4\pi)^n\Gamma(\frac{n}{2}+1)}\lambda^{n/2}
+o(\lambda^{(n/2})
\end{equation}
as $\lambda\to\infty$. A standard method to prove \eqref{weyl-law1} is the heat
equation method. Using the wave equation, one gets a more precise version with
an estimation of the remainder term:
\begin{equation}
N_X(\lambda)=\frac{\vol(X)}{(4\pi)^n\Gamma(\frac{n}{2}+1)}\lambda^{n/2}
+O(\lambda^{(n-1)/2})
\end{equation}
as $\lambda\to\infty$. This is due to Avakumovic \cite{Av} and H\"ormander.
Without
further assumptions on the Riemannian manifold, the remainder term is optimal
\cite{Av}. More generally, one can consider the Bochner-Laplace operator
$\Delta_E$ for a Hermitian vector bundle $E\to X$ with Hermitian connection.
There is a similar formula \eqref{weyl-law1}
for the eigenvalue counting function $N_X(\lambda,E)$ of $\Delta_E$. The only
difference is the rank of $E$ which appears on the right hand side in the
leading coefficient.

For non-uniform lattices $\Gamma$ the self-adjoint
operator $\Delta_\sigma$ has a large continuous spectrum so that alomost all
eigenvalues of $\Delta_\sigma$ will be embedded in the continuous spectrum which
makes it very difficult to study them. A number of
results are known for the spherical cuspidal spectrum. 
The first results concerning the growth of the cuspidal spectrum are due to
Selberg \cite{Se1}. He proved that for every congruence subgroup
$\Gamma\subset\SL(2,\Z)$,
the counting function of the cuspidal spectrum satisfies Weyl's law, i.e.,
one has
\begin{equation}\label{weyl-law4}
N_\Gamma(\lambda)=\frac{\vol(\Gamma\bs\bH^2)}{4\pi}\lambda^2-\frac{2m}{\pi}
\lambda\log\lambda+O(\lambda),
\end{equation}
as $\lambda\to\infty$.
This shows that for congruence subgroups eigenvalues
exist in abundance. On the other hand, based on their work on the dissolution
of cusp forms under deformation of lattices, Phillips and Sarnak \cite{Sa2}
conjectured that except for the Teichm\"uller space of the once punctured torus,
the point spectrum of the Laplacian on $\Gamma\bs\bH^2$ for a generic
non-uniform lattice $\Gamma$ in $\SL(2,\R)$ is finite and is contained in
$[0,1/4)$. In the more general context of manifolds with cusps
Colin de Verdère \cite{CV} has shown that under a generic compactly supported
conformal deformation of the metric of a non-compact hyperbolic surface of
finite area all eigenvalues $\lambda\ge 1/4$ are dissolved. 

If $\rk(G)>1$, the situation is very different.
By the results of Margulis, we have rigidity of irreducible lattices and
irreducible lattices are arithmetic. One expects that arithmetic groups
have a large discrete spectrum. The following conjecture is due to Sarnak
\cite{Sa1}.
\begin{conjecture}\label{conj1}
Let $\Gamma\subset G(\Q)$ be a congruence subgroup. Then for every $\nu\in
\Pi(\K_\infty)$, $N_{\Gamma,\cu}(\lambda;\nu)$ satisfies Weyl's law and
$N_{\Gamma,\res}(\lambda;\nu)$ is of lower order growth.
\end{conjecture}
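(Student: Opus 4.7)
The plan is to apply the Arthur invariant trace formula to a family of test functions adapted to the $\K_\infty$-type $\nu=\sigma$ and to extract the Weyl asymptotics by a Karamata-type Tauberian argument. Set $n=\dim\widetilde X$. For each $t>0$, I would construct a test function $f_{t,\sigma}$ on $G(\A)$ of the form $f_{t,\sigma}^\infty\otimes\vol(K_f)^{-1}\One_{K_f}$, with $K_f$ the compact open subgroup defining $\Gamma$, whose action on the $\sigma$-isotypic component of any unitary representation of $G(\R)$ reproduces $e^{-t\Delta_\sigma}$ and which annihilates all other $\K_\infty$-types. Concretely, $f_{t,\sigma}^\infty$ is obtained as the convolution of the bi-$\K_\infty$-invariant heat kernel on $\widetilde X$ (twisted by the Casimir shift $-\lambda_\sigma$) with the matrix coefficient $d(\sigma)\overline{\tr\sigma(\cdot)}$. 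The trace identity $J_\geo(f_{t,\sigma})=J_\spec(f_{t,\sigma})$ then drives everything.

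The geometric side is dominated by the identity contribution, which by a Plancherel computation on $G(\R)$ equals
\[
\frac{\dim\sigma\cdot\vol(X)}{(4\pi)^{n/2}\Gamma(\tfrac{n}{2}+1)}\,t^{-n/2}\bigl(1+o(1)\bigr)\qquad\text{as }t\to 0^+.
\]
The remaining terms in Arthur's fine geometric expansion---unipotent orbital integrals, weighted orbital integrals, and contributions of non-central semisimple classes---are of strictly lower order in $t$, by standard heat-kernel estimates on $\widetilde X$ and volume bounds on conjugacy classes at finite level. The spectral side splits as $J_\di(f_{t,\sigma})+J_\spec^{\mathrm{cont}}(f_{t,\sigma})$, where the discrete part is precisely the heat trace
\[
\sum_i \dim\cE(\lambda_i(\sigma))\,e^{-t\lambda_i(\sigma)}.
\]
Karamata's Tauberian theorem would then yield Weyl's law for $N_{\Gamma,\di}(\lambda,\sigma)$, provided the continuous part is shown to be $o(t^{-n/2})$.

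The main obstacle is the estimation of this continuous contribution, which by Arthur's refinement is a sum over proper parabolic subgroups $P$ of integrals of the shape
\[
\int_{i\af_P^*}\tr\!\bigl(M_P(\lambda)^{-1}M_P'(\lambda)\,\rho_{P,\lambda}(f_{t,\sigma})\bigr)\,d\lambda,
\]
where $M_P(\lambda)$ is the global intertwining operator. Property (L) of Finis--Lapid supplies the uniform polynomial-in-$\lambda$ bounds on $M_P(\lambda)^{-1}M_P'(\lambda)$ which, via the Gindikin--Karpelevich factorization, reduce to bounds on logarithmic derivatives of the automorphic $L$-functions occurring in the constant terms of Eisenstein series. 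Combined with the decay in $\lambda$ of $\tr\rho_{P,\lambda}(f_{t,\sigma})$ coming from the explicit form of $f_{t,\sigma}$, this delivers the desired $o(t^{-n/2})$ bound; this is clearly the hardest step. Finally, to separate the cuspidal from the residual part, I would invoke Langlands' description of $L^2_\res(\Gamma\bs G(\R),\sigma)$ as iterated residues of cuspidal Eisenstein series along singular hyperplanes of intertwining operators; this supports the residual automorphic spectral data on affine subspaces of $i\af_P^*$ of strictly smaller dimension, giving $N_{\Gamma,\res}(\lambda,\sigma)=O(\lambda^{(n-1)/2})$, and hence Weyl's law for $N_{\Gamma,\cu}(\lambda,\sigma)$ follows by subtraction.
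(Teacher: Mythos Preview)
Your outline is essentially the paper's strategy: adelic heat-kernel test functions inserted into Arthur's (non-invariant) trace formula, with the geometric side dominated by the identity contribution, the continuous spectral terms controlled via property~(L), and Karamata to extract the Weyl asymptotic for the discrete spectrum, followed by Langlands' residue description to isolate the residual piece.

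Two points where your sketch diverges from or underspecifies the paper's argument are worth flagging. First, the residual bound $N_{\Gamma,\res}(\lambda;\nu)=O(\lambda^{(n-1)/2})$ is not a pure dimension argument: one must count the singular hyperplanes of the intertwining operators $M_{Q|P}(\pi,\lambda)$ that meet a fixed ball, and this count (via the Maass--Selberg relations and the normalization \eqref{normalization1}) again requires property~(L) to bound poles of the global normalizing factors; the exponent $(n-1)/2$ then comes from Donnelly's Weyl-type upper bound for the cuspidal spectrum of the Levi $M$ together with $\dim X_M\le n-2$. Second, and logically prior, the paper needs the residual estimate \emph{for each proper Levi $M$} (Proposition~\ref{prop-res}) as an input to control the discrete-spectrum sums $\sum_{\pi\in\Pi_{\di}(M(\A);\lambda)}\dim\cA^2_\pi(P)^{K_f,\nu}$ appearing in the spectral side for $J_{\spec,M}$ (Lemma~\ref{lem-est-sum}); so the residual analysis is not merely a post-processing step after Karamata but feeds back into the proof that the continuous contribution is $O(t^{-(n-1)/2})$.
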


There are some general results concerning the conjecture. Let $G$ be a
connected real semisimple Lie group, $K$ a maximal compact subgroup of $G$,
and $\Gamma\subset G$ a torsion free lattice. Let $n=\dim G/K$. Donnelly
\cite[Theorem 9.1]{Do} has established the following upper bound for
the cuspidal spectrum
\begin{equation}\label{upper-bd}
\limsup_{\lambda\to\infty}\frac{N_{\Gamma,\cu}(\lambda;\nu)}{\lambda^{n/2}}\le
\frac{\dim(\nu)\vol(\Gamma\bs G/K)}{(4\pi)^{n/2}\Gamma(\frac{n}{2}+1)},
\end{equation}
which holds for every $\nu\in\Pi(K)$. Concerning the residual spectrum,
 it was proved in \cite[Theorem 0.1]{Mu1} that for a general lattice one has
\begin{equation}\label{bd-res-spec}
N_{\Gamma,\res}(\lambda;\nu)\ll 1+\lambda^{2n}.
\end{equation}
However, this is not the optimal bound that one expects. In general,
one would expect
that the residual spectrum is of order $O(\lambda^{n/2})$ and for arithmetic
groups of order $O(\lambda^{(n-1)/2})$ as $\lambda\to\infty$. 

Conjecture \eqref{conj1} has been verified in a number of cases. Most
of the results are obtained for the spherical spectrum. The first result in
higher rank  is due to S.D. Miller \cite{Mil}
who established the Weyl law for spherical cusp forms for 
$\Gamma=\SL(3,\Z)$. The author \cite{Mu3} proved it for a principal congruence
subgroup $\Gamma\subset \SL(n,\Z)$. The method of proof
follows Selberg's approach and uses the trace formula. Then
Lindenstrauss and Venkatesh  \cite{LV} proved the Weyl law for
spherical cusps forms in great generality, namely for congruence subgroups
$\Gamma\subset \bbG(\R)$, where $\bbG$ is a split adjoint semisimple group
over $\Q$.
The method is different. It uses Hecke operators to eliminate the
contribution of Eisenstein series. For congruence subgroups of $\SL(n,\Z)$,
E. Lapid and W. M\"uller \cite{LM} established the Weyl law for the cuspidal
spectrum with an estimation of the remainder term. The order of the remainder
term is $O(\lambda^{(d-1)/2}(\log\lambda)^{\max(n,3)})$ where
$d=\dim\SL(n,\R)/\SO(n)$. The method is also based on the Arthur trace formula
as in \cite{Mu3}. However, the argument is simplified and strengthened, which
corresponds to the use of the wave equation in the derivation of the Weyl law
for a compact Riemannian manifold.
Recently T. Finis and E. Lapid \cite{FL2} estimated
the remainder term for the cuspidal spectrum of a locally symmetric space
$X=\Gamma\bs {\bbG}(\R)/\K_\infty$, where $\bbG$ is a simply connected,
simple Chevalley group and $\Gamma$ a congruence subgroup of $\bbG(\Z)$. The
method
also uses Hecke operators as in \cite{LV}, but in a slightly different way. The
estimation they obtain is $O(\lambda^{d-\delta})$, where $d=\dim X$ and
$\delta>0$ some constant which is not further specified. In \cite{FM}, T. Finis
and J. Matz included Hecke operators. They studied the asymptotic behavior of
the traces of Hecke operators for the spherical discrete spectrum. 

For the non-spherical case, 
the Weyl law was proved in \cite{Mu3} for a principal congruence subgroup of
$\SL(n,\Z)$. Recently, A. Maiti \cite{Ma} has generalized the approach of
Lindenstrauss and Venkatesh \cite{LV} to establish the Weyl law for cusp forms
and arbitrary $K_\infty$-types. As in \cite{LV}, the method works
for a semi-simple, split, adjoint linear algebraic group over $\Q$. It provides
no results for the residual spectrum. 

Concerning the residual spectrum, there is the general upper bound
\eqref{bd-res-spec}, which, however, is not the expected optimal one.
For $\rk(G)=1$,
the residual spectrum is known to be finite. For $\GL(n)$ the residual
spectrum has been determined by M{\oe}glin and Waldspurger \cite{MW}. This
has been used in \cite[Proposition 3.6]{Mu3} to prove that in this case the
residual spectrum is of lower order growth.,

The main goal of the present paper is to prove Conjecture \eqref{conj1} for a
certain class of reductive groups including classical groups over a number
field. We use the Arthur trace formula to reduce the proof of the conjecture to
a problem about automorphic $L$-functions occurring in the constant terms of
Eisenstein series. This problem can be dealt with if the reductive group $G$ 
satisfies property (L), which was introduced by Finis and Lapid in  
\cite[Definition 3.4]{FL1}. 
Let $G$ be a reductive group over $\Q$. 
As usual, let $G(\R)^1$ denote the intersection of the kernels of the
homorphisms $|\chi|\colon G(\R)\to \R^{>0}$, where $\chi$ ranges over the
$\Q$-rational characters of $G$. Then our main result is the following theorem. 
 vvv
\begin{theo}\label{main-thm}
Let $\G_0$ be a connected reductive algebraic group over a number field $F$
which satisfies property (L). Let $G=\Res_{F/\Q}(G_0/F)$. Let $\K_\infty\subset
\G(\R)^1$ be a maximal compact subgroup and let $n=\dim \G(\R)^1/\K_\infty$. 
Let $\Gamma \subset \G(\Q)$ be a torsion free congruence subgroup. Then for
every $\nu\in\Pi(\K_\infty)$ we have
\begin{equation}\label{weyl-law3}
N_{\Gamma,\cu}(\lambda;\nu)\sim\frac{\dim(\nu)vol(\Gamma\bs \G(\R)^1/\K_\infty)}
{(4\pi)^{n/2}\Gamma(\frac{n}{2}+1)}\lambda^{n/2},\quad \lambda\to\infty.
\end{equation}
and
\begin{equation}\label{res-spec-bound}
N_{\Gamma,\res}(\lambda;\nu)\ll 1+\lambda^{(n-1)/2},\quad \lambda>0.
\end{equation}
\end{theo}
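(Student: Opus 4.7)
The plan is to prove Theorem \ref{main-thm} by the Arthur trace formula, along the lines of \cite{Mu3, LM, FL2}. First I would adelize the problem: by strong approximation, the torsion-free congruence subgroup $\Gamma \subset \G(\Q)$ corresponds to an open compact subgroup $K_f \subset \G(\A_f)$, and the $K_f$-invariants inside $L^2(\G(\Q)\bs \G(\A)^1)$ recover $L^2(\Gamma\bs \G(\R)^1)$. The counting functions $N_{\Gamma,\cu}(\lambda;\nu)$ and $N_{\Gamma,\res}(\lambda;\nu)$ thereby become traces, on the cuspidal and residual subspaces respectively, of the spectral projector for the operator associated to $\Delta_\nu \le \lambda$ in the $\K_\infty$-type $\nu$.

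Next I would insert into the trace formula a family of test functions $h_\lambda = h_{\lambda,\infty} \otimes \one_{K_f}$, where $h_{\lambda,\infty}$ is a finite-propagation, wave-kernel-type approximation of this spectral projector on $\G(\R)^1$. The geometric side is then dominated by the identity contribution, which, via the Plancherel formula on $\G(\R)^1$ at the $\K_\infty$-type $\nu$, reproduces the expected main term $\frac{\dim(\nu)\vol(\Gamma\bs \G(\R)^1/\K_\infty)}{(4\pi)^{n/2}\Gamma(n/2+1)}\lambda^{n/2}$, while the remaining unipotent and semisimple orbital integrals contribute $O(\lambda^{(n-1)/2})$ by standard estimates as in \cite{LM, FL2}. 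Combining this with Donnelly's upper bound \eqref{upper-bd} and subtracting the residual and continuous contributions on the spectral side would then force the asymptotic \eqref{weyl-law3}, provided these latter contributions are of order $O(\lambda^{(n-1)/2})$ up to logarithmic factors.

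The heart of the argument, and its main obstacle, lies in estimating the continuous part of the spectral side. By Langlands's theory this part is expressed as a sum over standard parabolic subgroups $P$ of $\G$ of integrals involving global intertwining operators, whose normalizing factors are built from automorphic $L$-functions attached to the cuspidal data on the Levi of $P$. Here I would invoke property (L), in the formulation of \cite[Definition 3.4]{FL1}: it provides polynomial bounds for logarithmic derivatives of the relevant $L$-functions in vertical strips and, consequently, polynomial control of the intertwining operators in the spectral parameter. Truncating the spectral parameter to a ball of radius $\sim \lambda^{1/2}$ dictated by the support of $h_{\lambda,\infty}$, together with these polynomial estimates, yields the desired $O(\lambda^{(n-1)/2}(\log\lambda)^c)$ bound for the Eisenstein contribution, which is precisely the improvement over the general bound \eqref{bd-res-spec} that property (L) affords.

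For the residual bound \eqref{res-spec-bound}, I would apply the same input in a slightly different guise. Residues of Eisenstein series are parameterized by poles of the intertwining operators, which lie on finitely many affine hyperplanes in the spectral parameter space, so the effective dimension of integration drops by one; the polynomial control from property (L), applied to residues rather than to the operators themselves, then yields the required $O(\lambda^{(n-1)/2})$ bound directly. The hardest step throughout is the uniform control of intertwining operators over families of automorphic representations: the ad hoc arguments available in the spherical or $\SL(n)$ settings of \cite{Mu3, LM} do not extend, and it is exactly property (L), verified by Finis and Lapid for classical groups over a number field, that supplies the uniform quantitative input making the trace-formula argument succeed for general $\K_\infty$-types and general reductive $\G$.
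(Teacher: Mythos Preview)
Your overall strategy is close to the paper's, but there is one genuine gap and one substantive methodological difference worth noting.

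\textbf{The gap: adelization via strong approximation.} You assert that by strong approximation the $K_f$-invariants in $L^2(\G(\Q)\bs \G(\A)^1)$ recover $L^2(\Gamma\bs \G(\R)^1)$. For a general reductive $\G$ this is false: strong approximation can fail (e.g.\ when $\G$ is not simply connected), and one only has
\[
L^2(\AG \G(\Q)\bs \G(\A))^{K_f}\cong\bigoplus_{i=1}^l L^2(\Gamma_i\bs \G(\R)^1)
\]
for finitely many congruence subgroups $\Gamma_1,\dots,\Gamma_l$, with $\Gamma=\Gamma_1$ among them. The paper (Section~\ref{sect-proof-main-thm}) therefore first proves the adelic Weyl law for the sum $\sum_i N_{\Gamma_i,\cu}(\lambda;\nu)$ (Theorem~\ref{main-thm-adelic}), and then isolates the individual $N_{\Gamma,\cu}$ by combining this with Donnelly's upper bound \eqref{upper-bd} applied to each $\Gamma_i$: since the sum has the right main term and each summand is bounded above by its share of that main term, each must actually attain its share. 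You mention Donnelly's bound, but not this de-adelization trick, and without it the argument does not reach a single $\Gamma$.

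\textbf{The methodological difference: heat kernel versus wave kernel.} The paper does not use finite-propagation wave-kernel test functions. It inserts the \emph{heat kernel} $\phi_t^\nu$ (see \eqref{adelic-heat-ker}) into the trace formula, shows via the geometric side (Proposition~\ref{prop-geo-side}) and the spectral-side estimate (Theorem~\ref{thm-spec-side}) that the discrete heat trace satisfies $\sum_\pi(\cdots)e^{t\lambda_{\pi_\infty}}=c\,t^{-n/2}+O(t^{-(n-1)/2})$ as $t\to 0$, and then applies Karamata's Tauberian theorem to obtain \eqref{weyl-law3} with remainder $o(\lambda^{n/2})$. Your wave-kernel approach, if it goes through, would potentially give a power-saving remainder as in \cite{LM}, but the paper makes no such claim. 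Relatedly, the paper establishes the residual bound \eqref{res-spec-bound} \emph{first} (Section~7, Proposition~\ref{prop-res}), because it is needed as input on the spectral side to bound the discrete-spectrum sums over proper Levi subgroups (Lemma~\ref{lem-est-sum}); your proposal reverses this logical order. Your description of how property~(L) enters---via control of normalizing factors of intertwining operators, hence of poles for the residual estimate and of logarithmic derivatives (property (TWN+)) for the continuous contribution---is, however, in agreement with what the paper actually does.
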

Thus in order to establish the Weyl law and the estimation of the residual
spectrum for every $\K_\infty$-type, we are reduced to the verification that
$G_0$
satisfies property (L). For $\GL(n)$ the relevant $L$-functions are the
Rankin-Selberg $L$-functions, which are known to satisfy the pertinent
properties. Using Arthur's work on functoriality from classical groups to
$\GL(n)$,  T. Finis and E. Lapid \cite[Theorem 3.11]{FL1} proved that
quasi-split
classical groups over a number field $F$ satisfy property (L). Moreover, they
also proved that inner forms of $\GL(n)$ and the exceptional group $G_2$ over a
number field $F$ satisfy property (L). In fact, one expects that property (L)
holds for all reductive groups. Currently, we only know
\cite[Theorem 3.11]{FL1}.
Together with Theorem \ref{main-thm} this leads to the following corollary.

\begin{cor}\label{cor-class-groups}
Let $F$ be a number field and let $\G_0$ be one of the following groups over
$F$:
\begin{enumerate}
\item $\GL(n)$ and its inner forms.
\item Quasi-split classical groups.
\item The exceptional group $G_2$.
\end{enumerate}
Let ${\G}=\Res_{F/\Q}({\G_0}/F)$. Let $\Gamma\subset \G(\Q)$ be a congruence
subgroup
and $\nu\in\Pi(\K_\infty)$. Then \eqref{weyl-law3} and \eqref{res-spec-bound}
hold.
\end{cor}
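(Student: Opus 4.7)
The plan is to reduce the corollary directly to Theorem \ref{main-thm} by verifying property (L) for each of the three classes of groups listed. Since Theorem \ref{main-thm} already takes care of the passage from a reductive group $G_0$ over a number field $F$ to $G=\Res_{F/\Q}(G_0/F)$ and yields both the Weyl law \eqref{weyl-law3} for $N_{\Gamma,\cu}(\lambda;\nu)$ and the bound \eqref{res-spec-bound} for $N_{\Gamma,\res}(\lambda;\nu)$ for every $\K_\infty$-type $\nu$, the corollary will follow at once once property (L) is confirmed in each case. Thus no new harmonic-analytic or trace-formula input is needed; the whole content is an invocation of known results on automorphic $L$-functions.

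For the three cases I would proceed as follows. For $\GL(n)$, the $L$-functions appearing in the constant terms of Eisenstein series are the Rankin--Selberg $L$-functions $L(s,\pi\times\pi')$ of Jacquet--Piatetski-Shapiro--Shalika, together with symmetric and exterior square $L$-functions, and the relevant analytic properties (meromorphic continuation, functional equation, boundedness in vertical strips, non-vanishing on the edge of the critical strip, together with the appropriate uniform bounds) are classical; these are precisely the ingredients that make $\GL(n)$ satisfy property (L), as recorded in \cite[Theorem 3.11]{FL1}. For quasi-split classical groups I would appeal to Arthur's endoscopic classification and functorial transfer to $\GL(n)$ to reduce the needed analytic properties of the intertwining-operator $L$-functions to the $\GL(n)$ case already handled. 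Finally, for inner forms of $\GL(n)$ and for $G_2$, I would simply quote the corresponding parts of \cite[Theorem 3.11]{FL1}, which establish property (L) for these groups directly.

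Having verified property (L) for $G_0$ in each case, the hypotheses of Theorem \ref{main-thm} are satisfied, and \eqref{weyl-law3} and \eqref{res-spec-bound} follow.

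The only potential obstacle in carrying out this plan is making sure that the definition of property (L) as given in \cite[Definition 3.4]{FL1} is the precise one used in the hypothesis of Theorem \ref{main-thm}, and that the list of $L$-functions appearing in the Eisenstein series constant terms for each $G_0$ above matches those whose analytic properties are established in \cite{FL1}. Beyond this bookkeeping, which is explicitly carried out in \cite[Theorem 3.11]{FL1}, no substantive additional argument is required.
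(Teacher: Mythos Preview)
Your proposal is correct and follows essentially the same approach as the paper: the corollary is deduced immediately from Theorem~\ref{main-thm} once one knows that each of the listed groups satisfies property~(L), and this is exactly what \cite[Theorem~3.11]{FL1} provides (the paper likewise notes that the classical-group case relies on Arthur's functorial transfer to $\GL(n)$). No further argument is needed.
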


Our approach to prove Theorem \ref{main-thm} is a generalization of the heat
equation method to the non-compact setting. The basic tool is the Arthur
trace formula. This requires to pass to the adelic setting. We will work with
reductive groups over a number field $F$. However, for the rest of the
introduction we will assume that $F=\Q$. So let $G$ be a connected reductive
group defined over $\Q$. Let $\A$ be the ring of adeles of $\Q$. Let $G(\A)^1
:=\cap_\chi \ker|\chi|$, where $\chi$ runs over the rational characters of $G$.
Denote by $T_G$ the split component of the center of $G$ and
let $\AG$ be the component of the identity of $T_G(\R)$. Then
\[
G(\A)=A_G\times G(\A)^1.
\]
We replace $\Gamma\bs G(\R)^1$ by the adelic quotient
$\AG G(\Q)\bs G(\A)/K_f=G(\Q)\bs G(\A)^1/K_f$, where
$K_f\subset G(\A_f)$ is an open compact subgroup.
Let $\Pi(G(\A))$ (resp. $\Pi(G(\A)^1)$) be the set of equivalence classes of
irreducible unitary representations of $G(\A)$ (resp. $G(\A)^1$). We identify
a representation of $G(\A)^1$ with a representation of $G(\A)$, which is trivial
on $A_G$. Let $L^2_{\di}(\AG G(\Q)\bs G(\A))$ be the
closure of the span of all irreducible subrepresentations of
the regular representation $R$ of $G(\A)$ in $L^2(\AG G(\Q)\bs G(\A))$. 
Denote by $\Pi_{\di}(G(\A))$ the subspace of all $\pi\in\Pi(G(\A))$
which are equivalent to a subrepresentation of the regular representation of
$G(\A)$ in $L^2(\AG G(\Q)\bs G(\A))$. Note that this is a countable set.
Denote by $R_{\di}$ the restriction
of $R$ to $L^2_{\di}(\AG G(\Q)\bs G(\A))$. Then
\begin{equation}\label{spec-decomp}
R_{\di}\cong \widehat\bigoplus_{\pi\in\Pi_{\di}(G(\A))}m(\pi)\pi,
\end{equation}
where
\begin{equation}\label{multipl-adelic}
m(\pi)=\dim\Hom(\pi,L^2(\AG G(\Q)\bs G(\A))
\end{equation}
is the multiplicity with which $\pi$ occurs in $L^2(\AG G(\Q)\bs G(\A))$.
Any $\pi\in\Pi(G(\A))$ can be written as  $\pi=\pi_\infty\otimes\pi_f$, where
$\pi_\infty$ and $\pi_f$ are irreducible unitary representations of $G(\R)$ and
$G(\A_f)$, respectively. Let $\H_{\pi_\infty}$ and $\H_{\pi_f}$ denote the Hilbert
space of the representation $\pi_\infty$ and $\pi_f$, respectively. 
Let $K_f\subset G(\A_f)$ be an open compact subgroup. Denote by $\H_{\pi_f}^{K_f}$
the subspace of $K_f$-invariant vectors in $\H_{\pi_f}$. Let 
$G(\R)^1=G(\A)^1\cap G(\R)$. Given $\pi\in\Pi(G(\A))$, denote by
$\lambda_{\pi_\infty}$ the Casimir eigenvalue of the restriction of $\pi_\infty$ to
$G(\R)^1$. Let $\nu\in\Pi(\K_\infty)$. Then we define the adelic counting
function of the discrete spectrum by
\begin{equation}\label{adelic-count-fct1}
N_{\di}^{K_f,\nu}(\lambda):=\sum_{\substack{\pi\in\Pi_{\di}(G(\A))\\
-\lambda_{\pi_\infty}\le\lambda}}m(\pi)\dim(\cH_{\pi_f}^{K_f})
\dim(\cH_{\pi_\infty}\otimes V_\nu)^{K_\infty}.
\end{equation}
In the same way we define the counting functions $N_{\cu}^{K_f,\nu}(\lambda)$ and
$N_{\res}^{K_f,\nu}(\lambda)$ of the cuspidal and residual spectrum, respectively.
The adelic version of Theorem \ref{main-thm} is then
\begin{theo}\label{main-thm-adelic}
Let $G_0$ be a connected reductive algebraic group over a number field $F$.
Assume that $G_0$ satisfies property (L). Let $G=\Res_{F/\Q}(G_0)$ be the
group that is obtained from $G_0$ by restriction of scalars.
Let $\K_\infty$ be a maximal compact subgroup of $G(\R)^1$. Let
$d:=\dim(G(\R)^1/\K_\infty)$.
Let $K_f\subset G(\A_f)$ be an open compact subgroup and let
$\nu\in \Pi(\K_\infty)$. Then we have
\begin{equation}\label{weyl-law-adelic}
N_{\cu}^{K_f,\nu}(\lambda)=\frac{\dim(\nu)\vol(\AG G(\Q)\bs G(\A)/K_f)}
{(4\pi)^{d/2}\Gamma(\frac{d}{2}+1)}\lambda^{d/2}+o(\lambda^{d/2}),\quad
\lambda\to\infty,
\end{equation}
and
\begin{equation}\label{res-spec-adelic}
N_{\res}^{K_f,\nu}(\lambda)\ll (1+\lambda^{(n-1)/2}), \quad \lambda>0.
\end{equation}
\end{theo}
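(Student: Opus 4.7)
The plan is to apply the non-invariant Arthur trace formula to a $K_\infty$-finite test function $h_t$ built from the heat kernel of $\Delta_\nu$, and then extract the Weyl law from a Tauberian argument. Concretely, for each $t>0$ take $h_t = h_t^\infty \otimes (\vol(K_f)^{-1}\one_{K_f})$, where $h_t^\infty$ corresponds, via convolution on $G(\R)^1$, to the heat operator $e^{-t\Delta_\nu}$ on the locally homogeneous bundle $E_\nu$. Equating geometric and spectral sides gives
\begin{equation*}
J_{\geo}(h_t) \;=\; J_{\spec}(h_t),
\end{equation*}
and the spectral side further splits as $J_{\di}(h_t) + J_{\spec}^{\text{cont}}(h_t)$, where $J_{\spec}^{\text{cont}}$ is a sum, indexed by proper Levi subgroups $L\subsetneq G$, of integrals over $i\af_L^\ast$ involving logarithmic derivatives of the global intertwining operators attached to $L$.

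The discrete part satisfies
\begin{equation*}
J_{\di}(h_t) \;=\; \sum_{\pi\in\Pi_{\di}(G(\A))} m(\pi)\,\dim(\H_{\pi_f}^{K_f})\,\dim(\H_{\pi_\infty}\otimes V_\nu)^{K_\infty}\,e^{t\lambda_{\pi_\infty}},
\end{equation*}
so the generating function for $N_{\di}^{K_f,\nu}$ appears directly. The identity contribution on the geometric side is, by the classical Minakshisundaram--Pleijel expansion of the heat kernel applied pointwise on $G(\R)^1/\K_\infty$, asymptotic as $t\to 0^+$ to
\begin{equation*}
\frac{\dim(\nu)\,\vol(\AG G(\Q)\bs G(\A)/K_f)}{(4\pi t)^{d/2}},
\end{equation*}
and a standard argument using the rapid decay of $h_t^\infty$ along regular semisimple and unipotent orbits shows that all remaining geometric distributions are $O(t^{-d/2+\varepsilon})$, and in fact the unipotent terms contribute at most a power of $\log(1/t)$ lower. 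Karamata's Tauberian theorem will then upgrade $J_{\di}(h_t) \sim C_\nu\, t^{-d/2}$ to the asymptotic \eqref{weyl-law-adelic}, provided the Eisenstein and residual contributions are of strictly lower order in $t^{-1}$.

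The main obstacle, and the heart of the argument, is the estimation of $J_{\spec}^{\text{cont}}(h_t)$. By the refined formula of Arthur as further developed by Finis--Lapid--M\"uller, each Levi term is an integral involving the logarithmic derivative $M_L(\lambda,\pi)^{-1}\partial_\lambda M_L(\lambda,\pi)$ of the global intertwining operator, and these normalized intertwining operators are expressible through the automorphic $L$-functions that occur in the constant terms of Eisenstein series. Property (L), by its very definition, furnishes the polynomial bound in the spectral and archimedean parameters on the $L^1$-norm along $i\af_L^\ast$ of these logarithmic derivatives. Combined with the Gaussian decay of the Plancherel transform of $h_t^\infty$ in the continuous spectral parameters and the upper bound \eqref{upper-bd} applied to the inducing data, this yields $J_{\spec}^{\text{cont}}(h_t) = O(t^{-(d-1)/2}(\log(1/t))^{N})$ for some $N$, hence $o(t^{-d/2})$, giving \eqref{weyl-law-adelic}.

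For the residual bound \eqref{res-spec-adelic}, the description of $L^2_{\res}$ via iterated residues of cuspidal Eisenstein series reduces the estimate to counting residual data supported on the singular hyperplanes of the intertwining operators. The number of such poles (weighted by the ranks of the residual subspaces) lying in a ball of radius $\sqrt{\lambda}$ in the spectral parameter is again controlled by property (L) through the same $L$-functions, and since the residues live on strictly lower-dimensional affine subspaces of $i\af_L^\ast$, one gains one power of $\lambda^{1/2}$ compared to Donnelly's bound \eqref{upper-bd}, producing $N_{\res}^{K_f,\nu}(\lambda) \ll 1+\lambda^{(n-1)/2}$. The passage between the archimedean "$n$" in \eqref{res-spec-adelic} and the global "$d$" above is handled by observing $d=n$ when $G=\Res_{F/\Q}G_0$ under the hypotheses of the theorem.
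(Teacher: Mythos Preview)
Your approach is essentially the same as the paper's: heat-kernel test function in the Arthur trace formula, property (L) to control logarithmic derivatives of intertwining operators on the spectral side, fine geometric expansion on the geometric side, Karamata to extract the Weyl law, and Langlands' description of the residual spectrum via iterated residues to obtain \eqref{res-spec-adelic}.

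There is one logical dependency you gloss over. In bounding $J_{\spec}^{\text{cont}}(h_t)$ you invoke Donnelly's bound \eqref{upper-bd} ``applied to the inducing data,'' but the inducing data is the full discrete spectrum of each proper Levi $M$, not just its cuspidal part; Donnelly's bound covers only the latter, and the crude general residual bound \eqref{bd-res-spec} is far too weak here. The paper handles this by proving the residual estimate \emph{first}, and for every Levi $M\in\cL$ (Proposition~\ref{prop-res}), via a direct analysis of the singular hyperplanes of cuspidal Eisenstein series using the Maass--Selberg relations and the pole-counting consequence of property (L) for the normalizing factors; this yields $N_{M,\res}^{K_{M,f},\sigma_M}(\lambda)\ll 1+\lambda^{(m_M-1)/2}$ without any appeal to the trace formula. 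Only then can one bound $\sum_{\pi\in\Pi_{\di}(M(\A);\lambda)}\dim\cA^2_\pi(P)^{K_f,\nu}\ll 1+\lambda^{d_M/2}$ (Lemma~\ref{lem-est-sum}) and conclude $J_{\spec}^{\text{cont}}(h_t)=O(t^{-(d-1)/2})$. So you should reverse the order: establish \eqref{res-spec-adelic} (in fact its Levi-wise analogue) before the spectral-side estimate, and then pass from $N_{\di}^{K_f,\nu}$ to $N_{\cu}^{K_f,\nu}$ at the end using the residual bound for $G$ itself.
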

To deduce Theorem \ref{main-thm} from the adelic version, we recall that there
exist finitely many congruence subgroups $\Gamma_i\subset G(\Q)$, $i=1,...,m$,
such that
\begin{equation}
\AG G(\Q)\bs G(\A)/K_f=\bigsqcup_{i=1}^m \Gamma_i\bs G(\R)^1.
\end{equation}
(see sect. \ref{sect-arithm-mfds}). Denote by $N_{\Gamma_i,\cu}(\lambda,\nu)$ the
counting function for the cuspidal spectrum
$L^2(\Gamma_i\bs G(\R)^1)\otimes V_\nu)^{K_\infty}$. Then it follows that
\begin{equation}\label{rel-count-fct}
N_{\cu}^{K_f,\nu}(\lambda)=\sum_{i=1}^m N_{\Gamma_i,\cu}(\lambda,\nu).
\end{equation}
This is used to derive Theorem \ref{main-thm} from Theorem
\ref{main-thm-adelic}.

To prove Theorem \ref{main-thm-adelic} we start with the estimation of the
residual counting function, which is needed to establish the Weyl law.
For this purpose we use Langlands' description of the residual spectrum in
terms of iterated residues of Eisenstein series \cite[Ch. 7]{La1},
\cite[V.3.13]{MW}. Using the Maass-Selberg
relations, the problem is finally reduced to the estimation of the number
of real poles of the normalizing factors of intertwining operators, which
appear in the constant terms of Eisenstein series. To obtain the appropriate
bounds, we need that $G$ satisfies property (L) which was introduced by Finis
and Lapid \cite[Definition 3.4]{FL1}.
In this way we get \eqref{res-spec-adelic}.

To prove the Weyl law, we use the Arthur trace formula. We will work with
groups over a number field $F$. However, in order to explain the method
we will simply assume that $F=\Q$. We proceed as in \cite{Mu3}. We choose test functions
$\phi^\nu_t\in C_c^\infty(G(\A)^1)$, $t>0$, which at the infinite place are
obtained from the heat kernel $H_t^\nu$ of the Bochner-Laplace operator
$\widetilde \Delta_\nu$ on the symmetric space $\widetilde X=G(\R)^1/\K_\infty$
and which at the finite places is given by the normalized characteristic
function of $K_f$ (see \eqref{adelic-heat-ker} for the precise definition).
Then we insert $\phi_t^\nu$ into the spectral side $J_{\spec}$ of the trace
formula and study the asymptotic behavior of $J_{\spec}(\phi_t^\nu)$ as
$t\to 0$. The spectral side is a sum of distributions $J_{\spec,M}$ associated
to conjugacy classes of Levi subgroups $M$ of $G$. For $M=G$ we have
\begin{equation}\label{discr-heat-tr}
J_{\spec,G}(\phi^\nu_t)=\sum_{\pi\in\Pi_{\di}(G(\A))} m(\pi) e^{t\lambda_{\pi_\infty}}
\dim(\cH_{\pi_f}^{K_f})\dim(\cH_{\pi_\infty}\otimes V_\nu)^{K_\infty},
\end{equation}
which is the contribution of the discrete spectrum to the spectral side. 
For $M\neq G$, the main ingredients of the distributions
$J_{\spec,M}$  are logarithmic derivatives of intertwining operators. The
intertwining operators can be normalized by certain meromorphic functions.
Then  the logarithmic derivatives of the intertwining operators are expressed
in terms of logarithmic derivatives of the normalizing
factors and logarithmic derivatives of the local normalized intertwining
operators. In fact, we only need to control integrals of logarithmic
derivatives which simplifies the problem. To deal with the integrals of 
logarithmic derivatives of the normalizing factors, we  use
property (TWN+) \cite[Definition 3.3]{FL1}.  By \cite[Proposition 3.8]{FL1},
property (TWN+) is a consequence of property (L) \cite[Definition 3.4]{FL1},
which we assume to be satisfied by $G$. To deal with the local intertwining
operators, we follow essentially the approach used in \cite{FLM2}. The final
result is Theorem \ref{thm-spec-side}, which states that if $G$ satisfies
property (L), then
\begin{equation}\label{spec-side-3}
J_{\spec}(\phi_t^\nu)=J_{\spec,G}(\phi_t^\nu)+O(t^{-(d-1)/2})
\end{equation}
as $t\to 0$.

Next we come to the geometric side $J_{\geo}(\phi_t^\nu)$. Its asymptotic behavior
as $t\to 0$ has been determined in \cite[Theorem 1.1]{MM2}. We will briefly
recall the main steps of the proof and determine the leading coefficient.
By the trace formula we have $J_{\spec}(\phi_t^\nu)=J_{\geo}(\phi_t^\nu)$, which
together with \eqref{discr-heat-tr} and \eqref{spec-side-3} leads to
\begin{equation}\label{adelic-heat-trace}
\begin{split}
\sum_{\pi\in\Pi_{\di}(G(\A))}m(\pi)\dim(\cH_{\pi_f}^{K_f})
\dim&\left(\cH_{\pi_\infty}\otimes V_\nu\right)^{K_\infty}\,e^{t\lambda_{\pi_\infty}}\\
&=\frac{\dim(\nu)\,\vol(X(K_f))}{(4\pi)^{d/2}}t^{-d/2}+O(t^{-(d-1)/2})
\end{split}
\end{equation}
as $t\to 0$.
Applying Karamata's theorem, we obtain the adelic Weyl law
\eqref{weyl-law-adelic}.

\section{Preliminaries}\label{sec-prelim}
\setcounter{equation}{0}

We will mostly use the notation of \cite{FLM1}. Let $G$ be a reductive algebraic
group defined over a number field $F$. We fix a minimal parabolic subgroup
$P_0$ of $\bG$ defined over $F$ and a Levi decomposition $P_0=M_0 U_0$, both
defined over $F$. Let $T_0$ be the $F$-split component of the center of $M_0$.
Let $\cF$ be the set of parabolic subgroups of $G$ which
contain $M_0$ and are defined over $F$. Let $\cL$ be the set of subgroups of
$\bG$ which contain $M_0$ and are Levi components of groups in $\cF$. 
For any $P\in\cF$ we write
\[
P=M_PN_P,
\]
where $N_P$ is the unipotent radical of $P$ and $M_P$ belongs to $\cL$. 

Let $M\in\cL$. Denote by $T_M$ the $F$-split component of the
center of $M$. 
Put $T_P=T_{M_P}$. With our previous notation, we have $T_0=T_{M_0}$. Let
$L\in\cL$ and assume that $L$ contains
$M$. Then $L$ is
a reductive group defined over $F$ and $M$ is a Levi subgroup of $L$. We 
shall denote the set of Levi subgroups of $L$ which contain $M$ by $\cL^L(M)$.
We also write $\cF^L(M)$ for the set of parabolic subgroups of $L$, defined 
over $F$, which contain $M$, and $\cP^L(M)$ for the set of groups in $\cF^L(M)$
for which $M$ is a Levi component. Each of these three sets is finite. If 
$L=\bG$, we shall usually denote these sets by $\cL(M)$, $\cF(M)$ and $\cP(M)$.

Let $W_0=N_{\bG(F)}(T_0)/M_0$ be the Weyl group of $(\bG,T_0)$,
where $N_{\bG(F)}(H)$ denotes the normalizer of $H$ in $\bG(F)$.
For any $s\in W_0$ we choose a representative $w_s\in \bG(F)$.
Note that $W_0$ acts on $\levis$ by $sM=w_s M w_s^{-1}$. For $M\in\cL$ let
$W(M)=N_{\bG(F)}(M)/M$, which can be identified with a subgroup of $W_0$.

Let $X(M)_F$ be the group of characters of $M$ which are defined over $F$. 
Put
\begin{equation}\label{liealg}
\af_{M}:=\Hom(X(M)_F,\R).
\end{equation}
This is a real vector space whose dimension equals that of $T_M$. Its dual 
space is
\[
\af_{M}^\ast=X(M)_F\otimes \R.
\]
 We shall write, 
\begin{equation}\label{liealg1}
\af_P=\af_{M_P} \quad\text{and}\quad \af_0=\af_{M_0}.
\end{equation}
 
For any $L\in\cL(M)$ we identify $\af_L^\ast$ with a subspace of $\af_M^\ast$.
We denote by $\af_M^L$ the annihilator of $\af_L^\ast$ in $\af_M$. Then $r=\dim \af_0^{\bG}$ is the semisimple rank of $\bG$.
We set
\begin{equation}\label{l1}
\levis_1(M)=\{L\in\levis(M):\dim\aaa_M^L=1\}
\end{equation}
and
\begin{equation}\label{f1}
\cF_1(M)=\bigcup_{L\in\levis_1(M)}\cP(L).
\end{equation}
Let $\Sigma_P\subset \af_P^\ast$ be the set of reduced roots of $T_P$ on the
Lie algebra $\nf_P$ of $N_P$. Let $\Delta_P$ be the subset of simple roots of
$P$, which is a basis for $(\af_P^G)^\ast$. Denote by $\Sigma_M$ the set of
reduced roots of $T_M$ on the Lie algebra of $G$. 
For any $\alpha\in\rts_M$ we denote by $\alpha^\vee\in\aaa_M$
the corresponding co--root. Let $P_1$ and $P_2$ be parabolic subgroups with
$P_1\subset P_2$. Then $\af_{P_2}^\ast$ is embedded into $\af_{P_1}^\ast$, while
$\af_{P_2}$ is a natural quotient vector space of $\af_{P_1}$. The group
$M_{P_2}\cap P_1$ is a parabolic subgroup of $M_{P_2}$. Let $\Delta_{P_1}^{P_2}$
denote the set of simple roots of $(M_{P_2}\cap P_1,T_{P_1})$. It is a subset
of $\Delta_{P_1}$. For a parabolic subgroup $P$ with $P_0\subset P$ we write
$\Delta_0^P:=\Delta_{P_0}^P$. 

Let $\A$  be the ring of adeles of $F$, $\A_f$ the ring of finite adeles and
$F_\infty=F\otimes_\Q\R$.
We fix a maximal compact subgroup $\K=\prod_v \K_v = \K_\infty\cdot 
\K_{f}$ of $\bG(\A)=\bG(F_\infty)\cdot \bG(\A_{f})$. We assume that the maximal 
compact subgroup $\K \subset \bG(\A)$ is admissible with respect to 
$M_0$ \cite[$\S$ 1]{Ar6}. Let $\Ht_M: M(\A)\rightarrow\aaa_M$ be the 
homomorphism given by 
\begin{equation}\label{homo-M}
e^{\sprod{\chi}{\Ht_M(m)}}=\abs{\chi (m)}_\A = \prod_v\abs{\chi(m_v)}_v
\end{equation}
for any $\chi\in X(M)_F$ and denote by $M(\A)^1 \subset M(\A)$ the kernel 
of $\Ht_M$. 

Let $G_1=\Res_{F/\Q}(G)$ be the group over $\Q$ obtained from $G$ by restriction
of scalars \cite{We}. Similar for any $M\in\cL$ let $M_1:=\Res_{F/\Q}(M)$.
Let $T_{M_1}$ be the $\Q$-split component of the center of $M_1$. 
For $M\in\cL$ let $\Ai$ denote the connected component of the identity of
$T_{M_1}(\R)$, which is viewed as a subgroup of $T_M(\A_F)$ via the diagonal
embedding of $\R$ into $F_\infty$. Note that it follows from the properties of
the restriction of scalars that $M_1(\A_\Q)^1\cong M(\A_F)^1$. Thus we have
\[
M(\A_F)=\Ai\times M(\A_F)^1.
\]
Let $L^2_{\disc}(\Ai M(F)\bs M(\A))$ be the discrete part of 
$L^2(\Ai M(F)\bs M(\A))$, i.e., the
closure of the sum of all irreducible subrepresentations of the regular 
representation of $M(\A)$.
We denote by $\Pi_{\disc}(M(\A))$ the countable set of equivalence classes of 
irreducible unitary
representations of $M(\A)$ which occur in the decomposition of the discrete 
subspace $L^2_{\disc}(\Ai M(F)\bs M(\A))$ into irreducible representations.
Let $L^2_{\cu}(\Ai M(F)\bs M(\A))$ be the subspace of cusp forms. Denote by
$\Pi_{\cu}(M(\A))$ the set of equivalence classes of irreducible unitary
representations of $M(\A)$ which occur in the decomposition of the space of
cusp forms $L^2_{\cu}(\Ai M(F)\bs M(\A))$ into irreducible representations.

Let $\gf$ and $\kf$ denote the Lie algebras of $\bG(F_\infty)$ and $\K_\infty$,
respectively. Let $\theta$ be the Cartan involution of $\bG(F_\infty)$ with
respect to $\K_\infty$. It induces a Cartan decomposition
$\mathfrak{g}= \mathfrak{p} \oplus \mathfrak{k}$. 
We fix an invariant bi-linear form $B$ on $\mathfrak{g}$ which is positive 
definite on $\mathfrak{p}$ and negative definite on $\mathfrak{k}$.
This choice defines a Casimir operator $\Omega$ on $\bG(F_\infty)$,
and we denote the Casimir eigenvalue of any $\pi \in \Pi (\bG(F_\infty))$ by 
$\lambda_\pi$. Similarly, we obtain
a Casimir operator $\Omega_{\K_\infty}$ on $\K_\infty$ and write $\lambda_\tau$ for 
the Casimir eigenvalue of a
representation $\tau \in \Pi (\K_\infty)$ (cf. \cite[$\S$ 2.3]{BG}).
The form $B$ induces a Euclidean scalar product $(X,Y) = - B (X,\theta(Y))$ on 
$\mathfrak{g}$ and all its subspaces.
For $\tau \in \Pi (\K_\infty)$ we define $\norm{\tau}$ as in 
\cite[$\S$ 2.2]{CD}. Note that the restriction of the scalar product 
$(\cdot,\cdot)$ on $\gf$ to $\af_0$ gives $\af_0$ the structure of a 
Euclidean space. In particular, this fixes Haar measures on the spaces 
$\af_M^L$ and their duals $(\af_M^L)^\ast$. We follow Arthur in the 
corresponding normalization of Haar measures on the groups $M(\A)$ 
(\cite[$\S$ 1]{Ar1}).

Let $H$ be a topological group. We will denote by $\Pi(H)$ the set of
equivalence classes of irreducible unitary representations of $H$. 

Next we introduce the space $\Co(G(\A)^1)$ of Schwartz functions. 
For any compact open subgroup
$K_f$ of $G(\A_f)$ the space $G(\A)^1/K_f$ is the countable disjoint union of
copies of $G(F_\infty)^1=G(F_\infty)\cap G(\A)^1$ and therefore, it is a differentiable
manifold. Any element $X\in\mathcal{U}(\gf^1_\infty)$ of the universal 
enveloping algebra of the Lie algebra $\gf_\infty^1$ of $G(F_\infty)^1$ defines a
left invariant differential operator $f\mapsto f\ast X$ on $G(\A)^1/K_f$. Let
$\Co(G(\A)^1;K_f)$ be the space of smooth right $K_f$-invariant functions on
$G(\A)^1$ which belong, together with all their derivatives, to $L^1(G(\A)^1)$.
The space $\Co(G(\A)^1;K_f)$ becomes a Fr\'echet space under the seminorms
\[
\|f\ast X\|_{L^1(G(\A)^1)},\quad X\in\mathcal{U}(\gf^1_\infty).
\]
Denote by $\Co(G(\A)^1)$ the union of the spaces $\Co(G(\A)^1;K_f)$ as $K_f$ 
varies over the compact open subgroups of $G(\A_f)$ and endow 
$\Co(G(\A)^1)$ with the inductive
limit topology.

\section{Arithmetic manifolds}\label{sect-arithm-mfds}
\setcounter{equation}{0}

In this section we introduce the adelic description of the locally symmetric
spaces we will work with. We also explain the relation to the usual set up.

Let $G$ be a reductive algebraic group over a number field $F$. Fix a faithful
$F$-rational representation $\rho\colon G\to\GL(V)$ and an ${\cO}_F$-lattice $\Lambda$ in the representation space $V$ such that the stabilizer of
$\hat\Lambda:=\hat\cO_F\otimes\Lambda\subset\A_f\otimes V$ in $G(\A_f)$ is the
group $\K_f$. Since the maximal compact
subgroups of $\GL(\A_f\otimes V)$ are precisely the stabilizers of
lattices, it is easy to see that such a lattice exists. For any non-zero
ideal $\nf$ of $\cO_F$, let
\[
\K(\nf)=\K_G(\nf)=\{g\in G(\A_f)\colon \rho(g)v\equiv v\quad
(\hskip-10pt\mod\;\nf\hat\Lambda),\;v\in\hat\Lambda\}
\]
be the principal congruence subgroup of level $\nf$. Note that $\K(\nf)$ is a
factorizable normal subgroup of $\K_f$. Moreover, the groups $\K(\nf)$ form
a neighborhood base of the identity element in $G(\A_f)$, i.e., every
compact open subgroup $K_f\subset G(\A_f)$ contains a $K(\nf)$ for some ideal
$\nf$. We denote by $N(\nf):=[\ho_F\colon\nf]$ the ideal norm of $\nf$. 

A subgroup $\Gamma\subset G(F)$ is a congruence subgroup if it contains a
a finite-index subgroup of the form $\Gamma(\nf):=G(F)\cap \K(\nf)$ for some
ideal $\nf$. This definition of a congruence subgroup is independent of the
choice of a faithful representation, i.e., it is intrinsic to the $F$-group $G$.
Let $K_f\subset G(\A_f)$ be a  compact open subgroup. Then there exists an ideal
$\nf$ of $\cO_F$ such that $\K(\nf)\subset K_f$. Let $\Gamma_{K_f}:=G(F)\cap K_f$. Then $\Gamma(\nf)\subset \Gamma_{K_f}$ is a finite index subgroup. Thus
$\Gamma_{K_f}$ is a congruence subgroup of $G(F)$.

By \cite[$\S$ 5.6]{Bo1}  the double coset space
$G(F)\bs G(\A)/G(F_\infty)K_f$ is finite. Let $x_1=1, x_2,\dots,x_l$ be a set of 
representatives in $G(\A_f)$ of the double cosets. Then the groups
\begin{equation}\label{congr-subgr}
\Gamma_i:=\left( G(F_\infty)\times x_i K_f x_i^{-1}\right)\cap G(F),\quad 1\le i\le l,
\end{equation}
are arithmetic subgroups of $G(F_\infty)$ and the action of $G(F_\infty)$ on the
space of double
cosets $\AG G(F)\bs G(\A)/K_f$ induces the following decomposition into
$G(F_\infty)$-orbits:
\begin{equation}\label{adel-quot1}
\AG G(F)\bs G(\A)/K_f\cong \bigsqcup_{i=1}^l
\left(\Gamma_i\bs G(F_\infty)^1\right),
\end{equation}
where $G(F_\infty)^1=G(F_\infty)/\Ag$. Given a function $f$ on $G(\A)$, let
$f_i$ be the function on $G(F_\infty)$ which is defined by
$g\mapsto f(x_i\cdot g)$, $g\in G(F_\infty)$. Then the map
$f\mapsto (f_i)_{i=1}^l$ yields an isomorphism of $G(F_\infty)$-modules
\begin{equation}\label{g-modules}
L^2(\Ag G(F)\bs G(\A))^{K_f}\cong \bigoplus_{i=1}^lL^2(\Gamma_i\bs G(F_\infty)^1)
\end{equation}
\cite[4.3]{BJ}. We note that, in general, $l>1$. However, if
$G$  is semisimple,
simply connected, and without any $F$-simple factors $H$ for which $H(F_\infty)$
is compact, then by strong approximation we have 
\[
G(F)\bs G(\A)/K_f\cong \Gamma\bs G(F_\infty),
\] 
where $\Gamma=(G(F_\infty)\times K_f)\cap G(F)$. In particular this is the case
for $G=\SL(n)$.
Since \eqref{g-modules} is an isomorphism of $G(F_\infty)$-modules, it holds also
for the discrete spectrum, i.e., we have an isomorphism of $G(F_\infty)$-modules
\begin{equation}\label{g-modules-dis}
L^2_{\di}(\Ag G(F)\bs G(\A))^{K_f}\cong
\bigoplus_{i=1}^lL^2_{\di}(\Gamma_i\bs G(F_\infty)^1).
\end{equation}
Let $P\in\cL$ with Levi decomposition $P=M\ltimes N$.
Let $f\in L^2(\Ag G(F)\bs G(\A))$ correspond to
$(f_i)_{i=1}^l\in\oplus_{i=1}^lL^2(\Gamma_i\bs G(F_\infty)^1)$ as above. As
explained in \cite[Sect. 4.4]{BJ}, $f$ is a cusp function if and only if each
$f_i$, $i=1,...,l$, is a cusp function. Hence \eqref{g-modules} induces an
isomorphism
\begin{equation}\label{g-modules-cus}
L^2_{\cu}(\Ag G(F)\bs G(\A))^{K_f}\cong \bigoplus_{i=1}^l
L^2_{\cu}(\Gamma_i\bs G(F_\infty)^1)
\end{equation}
of $G(F_\infty)$-modules. Since $L^2_{\res}(\cdot)$ is the orthogonal complement
of $L^2_{\cu}(\cdot)$ in $L^2(\cdot)$, it follows from \eqref{g-modules-dis}
and \eqref{g-modules-cus}, that we also have an isomorphism of
$G(F_\infty)$-modules
\begin{equation}\label{g-modules-res}
L^2_{\res}(\Ag G(F)\bs G(\A))^{K_f}\cong \bigoplus_{i=1}^l
L^2_{\res}(\Gamma_i\bs G(F_\infty)^1).
\end{equation}
Given $\tau\in\Pi(G(F_\infty))$, let $m(\tau)$
be the multiplicity with which the representation $\tau$ occurs in
$L^2_{\di}(\Ag G(F)\bs G(\A))^{K_f}$. Then
\begin{equation}\label{multipl1}
m(\tau)=\sum_{\substack{\pi\in\Pi_{\di}(G(\A))\\\tau=
\pi_\infty}}m(\pi)\dim(\cH_{\pi^\prime_f}^{K_f}),
\end{equation}
where $\pi=\pi_\infty\otimes\pi_f$. Similarly, let $m_{\Gamma_i}(\tau)$ be the
multiplicity with which $\tau$ occurs in $L^2_{\di}(\Gamma_i\bs G(F_\infty)^1)$. 
Since \eqref{g-modules-dis} is an isomorphism of $G(F_\infty)^1$-modules, it
follows that
\begin{equation}\label{multipl2}
\sum_{\substack{\pi\in\Pi_{\di}(G(\A))\\
\pi_\infty=\tau}}m(\pi)\dim(\cH_{\pi^\prime_f}^{K_f})=\sum_{j=1}^l m_{\Gamma_j}(\tau).
\end{equation}

Let $\K_\infty\subset G(F_\infty)^1$ be a maximal compact subgroup. Let
\begin{equation}\label{symspace}
\widetilde X:=G(F_\infty)^1/\K_\infty
\end{equation}
be the associated global Riemannian symmetric space.
Given an open compact subgroup $K_f\subset G(\A_f)$, we  define the
arithmetic manifold $X(K_f)$ by
\begin{equation}\label{adel-quot2}
X(K_f):= G(F)\bs (\widetilde X \times G(\A_f)/K_f).
\end{equation}
By \eqref{adel-quot1} we have
\begin{equation}
X(K_f)=\bigsqcup_{i=1}^l \left(\Gamma_i\bs \widetilde X\right),
\end{equation}
where each component $\Gamma_i\bs \widetilde X$ is a locally symmetric space.
We will assume that $K_f$ is neat. Then $X(K_f)$ is a locally symmetric
manifold of finite volume.

Let $\nu\in\Pi(\K_\infty)$. Let $\widetilde E_\nu\to \widetilde X$ be the
homogeneous vector bundle associated to $\nu$. Denote by $C^\infty(\widetilde
X,\widetilde E_\nu)$ the space of smooth sections of $\widetilde E_\nu$. Let
\begin{align}\label{globsect}
\begin{split}
  C^{\infty}(G(F_\infty)^1,\nu):=\{f:G(F_\infty)^1\rightarrow V_{\nu}\colon f\in
  C^\infty,\:
f(gk)=&\nu(k^{-1})f(g),\\
&\forall g\in G(F_\infty)^1, \,\forall k\in \K_\infty\}.
\end{split}
\end{align}
Let $L^2(G(F_\infty)^1,\nu)$ be the corresponding $L^2$-space. There is a
canonical isomorphism
\begin{equation}\label{iso-glsect}
\widetilde A\colon C^\infty(\widetilde X,\widetilde E_\nu)\cong 
C^\infty(G(F_\infty)^1,\nu),
\end{equation}
(see \cite[p. 4]{Mia}). $\widetilde A$ extends to an isometry of the 
corresponding $L^2$-spaces.

Over each component of $X(K_f)$, $\widetilde E_\sigma$ induces a
locally homogeneous Hermitian vector bundle
$E_{i,\sigma}\to\Gamma_i\bs \widetilde X$. Let
\[
E_\sigma:=\bigsqcup_{i=1}^l E_{i,\sigma}.
\]
Then $E_\sigma$ is a vector bundle over $X(K_f)$ which is locally homogeneous.
Let $L^2(X(K_f),E_\sigma)$ be the space of square integrable sections of
$E_\sigma$.

\section{Eisenstein series and intertwining operators}\label{sect-eisenstein}
\setcounter{equation}{0}

In this section we recall some basic facts about Eisenstein series and
intertwining operators, which are the main ingredients of the spectral
side of the Arthur trace formula.

Let $M\in\cL$ and $P\in\cP(M)$ with $P=M\ltimes N_P$.
Recall that we denote  by $\rts_P\subset\af_P^*$ the set of reduced roots of 
$T_M$ on the Lie algebra $\mathfrak{n}_P$ of $N_P$.
Let $\srts_P$ be the subset of simple roots of $P$, which is a basis for 
$(\af_P^G)^*$.
Write $\af_{P,+}^*$ for the closure of the Weyl chamber of $P$, i.e.
\[
\aaa_{P,+}^*=\{\lambda\in\aaa_M^*:\sprod{\lambda}{\alpha^\vee}\ge0
\text{ for all }\alpha\in\rts_P\}
=\{\lambda\in\aaa_M^*:\sprod{\lambda}{\alpha^\vee}\ge0\text{ for all }
\alpha\in\srts_P\}.
\]
Denote by $\modulus_P$ the modulus function of $P(\A)$.
Let $\bar\AF^2(P)$ be the Hilbert space completion of
\[
\{\phi\in C^\infty(M(F)U_P(\A)\bs G(\A)):\modulus_P^{-\frac12}\phi(\cdot x)\in
L^2_{\disc}(\Ai M(F)\bs M(\A)),\ \forall x\in G(\A)\}
\]
with respect to the inner product
\[
(\phi_1,\phi_2)=\int_{\Ai M(F)N_P(\A)\bs \bG(\A)}\phi_1(g)
\overline{\phi_2(g)}\ dg.
\]

Let $\alpha\in\rts_M$.
We say that two parabolic subgroups $P,Q\in\cP(M)$ are \emph{adjacent} along 
$\alpha$, and write $P|^\alpha Q$, if $\rts_P\cap-\rts_Q=\{\alpha\}$.
Alternatively, $P$ and $Q$ are adjacent if the group $\langle P,Q\rangle$
generated by $P$ and $Q$ belongs to $\cF_1(M)$ (see \eqref{f1} for its
definition).
Any $R\in\cF_1(\M)$ is of the form $\langle P,Q\rangle$, where $P,Q$ are
the elements of $\cP(M)$ contained in $R$. We have $P|^\alpha Q$ with 
$\alpha^\vee\in\rts_P^\vee \cap\af^R_M$.
Interchanging $P$ and $Q$ changes $\alpha$ to $-\alpha$.

For any $P\in\cP(M)$ let $\Ht_P\colon G(\A)\rightarrow\af_P$ be the 
extension of $\Ht_M$ to a left $N_P(\A)$-and right $\K$-invariant map.
Denote by $\cA^2(P)$ the dense subspace of $\bar\cA^2(P)$ consisting of its 
$\K$- and $\zzz$-finite vectors,
where $\zzz$ is the center of the universal enveloping algebra of 
$\mathfrak{g} \otimes \C$.
That is, $\cA^2(P)$ is the space of automorphic forms $\phi$ on 
$N_P(\A)M(F)\bs G(\A)$ such that
$\modulus_P^{-\frac12}\phi(\cdot k)$ is a square-integrable automorphic form on
$\Ai M(F)\bs M(\A)$ for all $k\in\K$.
Let $\rho(P,\lambda)$, $\lambda\in\af_{M,\C}^*$, be the induced
representation of $G(\A)$ on $\bar\cA^2(P)$ given by
\[
(\rho(P,\lambda,y)\phi)(x)=\phi(xy)e^{\sprod{\lambda}{\Ht_P(xy)-\Ht_P(x)}}.
\]
It is isomorphic to the induced representation 
\[
\Ind_{P(\A)}^{G(\A)}\left(L^2_{\disc}(\Ai M(F)\bs M(\A))
\otimes e^{\sprod{\lambda}{\Ht_M(\cdot)}}\right).
\]
For $\phi\in\cA^2(P)$ and $\lambda\in\af^\ast_{P,\C}$, the associated
Eisenstein series is defined by
\begin{equation}\label{eisen-ser}
E(g,\phi,\lambda):=\sum_{\gamma\in P(F)\bs G(F)}\phi(\gamma g)
e^{(\lambda+\rho_p)(H_P(\gamma g))}.
\end{equation}
The series converges absolutely and locally uniformly in $g$ and $\lambda$
for $\Re(\lambda)$ sufficiently regular in the positive Weyl chamber of
$\af_P^\ast$ (\cite[II.1.5]{MW}.
By Langlands \cite{La1} the Eisenstein series  can be continued analytically
to a meromorphic
function of $\lambda\in\af^\ast_{P,\C}$. Its singularities lie along hypersurfaces
defined by root equations. 

Let $M,M_1\in\cL$. Let $W(\af_M,\af_{M_1})$ be the set of
isomorphisms from $\af_M$ onto $\af_{M_1}$ obtained by restricting elements in
$W_0$, the Weyl group of $(G,T_0)$, to $\af_M$. Each $s\in W(\af_M,\af_{M_1})$
has a representative $w_s$ in $G(F)$. Given $s\in W(\af_M,\af_{M_1})$, $P\in
\cP(M)$ and $P_1\in\cP(M_1)$, let
\begin{equation}\label{intertw0}
M_{P_1|P}(s,\lambda):\cA^2(P)\to\cA^2(P_1),\quad\lambda\in\af_{M,\C}^*,
\end{equation}
be the standard \emph{intertwining operator} \cite[$\S$ 1]{Ar9}, which is the 
meromorphic continuation in $\lambda$ of the integral
\begin{equation}
\begin{split}
[M_{P_1|P}&(s,\lambda)\phi](x)\\
&=\int_{N_{P_1}(\A)\cap w_sN_P(\A)w_s^{-1}\bs N_{P_1}(\A)}
\phi(w_s^{-1}nx)
e^{(\lambda+\rho_P)(\Ht_P(w_s^{-1}nx))}e^{-(s\lambda+\rho_{P_1})(\Ht_{P_1}(x))}\,dn,
\end{split}
\end{equation}
for $\phi\in\cA^2(P)$, $ x\in G(\A)$. Let $M=M_1$. Then for $Q,P\in\cP(M)$ and
$1\in W(\af_M)$ the identity element, we put
\begin{equation}\label{intertw1}
M_{Q|P}(\lambda):=M_{Q|P}(1,\lambda).
\end{equation}

Recall that $L^2_{\di}(A_M M(F)\bs M(\A))$ decomposes as the completed direct
sum of its $\pi$-isotopic components for $\pi\in\Pi_{\di}(M(\A))$. We have a
corresponding decomposition of $\bar\cA^2(P)$ as a direct sum of Hilbert spaces
$\hat\oplus_{\pi\in\Pi_{\di}(M(\A))}\bar\cA^2_\pi(P)$ and the corresponding algebraic
sum decomposition
\begin{equation}\label{pi-isotypic}
\cA^2(P)=\bigoplus_{\pi\in\Pi_{\di}(M(\A))}\cA^2_\pi(P).
\end{equation}
We further decompose $\cA^2_\pi(P)$ according to the action of $\K_\infty$ into
isotypic subspaces
\begin{equation}\label{isotypic-subsp}
\cA^2_\pi(P)=\bigoplus_{\nu\in\Pi(\K_\infty)}\cA^2_\pi(P)^\nu.
\end{equation}
Furthermore, for an open compact subgroup $K_f\subset G(\A_f)$ let
$\cA^2_\pi(P)^{K_f}$ be the subspace of $K_f$-invariant functions in $\cA^2_\pi(P)$
and for $\nu\in\Pi(\K_\infty)$ we let $\cA^2_\pi(P)^{K_f,\nu}$ be the $\nu$-isotypic
subspace of $\cA^2_\pi(P)^{K_f}$.

Given $\pi\in\Pi_{\di}(M(\A))$, let  $(\Ind_{P(\A)}^{G(\A)}(\pi),\cH_P(\pi))$ be the
induced representation. Let $\cH^0_P(\pi)$ be the subspace of $\cH_P(\pi)$,
consisting of all $\phi\in\cH_P(\pi)$ which are right $\K$-finite and right
$\cZ(\gf_\C)$-finite. 
There is a canonical isomorphism of 
$G(\A_f)\times(\LieG_{\C},\K_\infty)$-modules
\begin{equation}\label{ind-rep-1}
j_P:\Hom(\pi,L^2(\Ai M(F)\bs M(\A)))\otimes 
\cH_P^0(\pi)\rightarrow\cA^2_\pi(P).
\end{equation}
If we fix a unitary structure on $\pi$ and endow 
$\Hom(\pi,L^2(\Ai M(F)\bs M(\A)))$ with the inner product 
$(A,B)=B^\ast A$
(which is a scalar operator on the space of $\pi$), the isomorphism $j_P$ 
becomes an isometry. Let
\begin{equation}\label{intertw-restr}
M_{Q|P}(\pi,\lambda):=M_{Q|P}(\lambda)|_{\cA^2_\pi(P)}
\end{equation}
be the restriction of $M_{Q|P}(\lambda)$ to the subspace $\cA^2_\pi(P)$.
Suppose that $P|^\alpha Q$.
The operator $M_{Q|P}(\pi,z):=M_{Q|P}(\pi,z\varpi)$, where $\varpi\in\af_M^\star$
is such that $\langle\varpi,\alpha^\vee\rangle=1$, admits a 
normalization by a global factor
$n_\alpha(\pi,z)$ which is a meromorphic function in $z\in\C$. We may write
\begin{equation} \label{normalization}
M_{Q|P}(\pi,z)\circ j_P=n_\alpha(\pi,z)\cdot j_Q\circ(\Id\otimes R_{Q|P}(\pi,z))
\end{equation}
where $R_{Q|P}(\pi,z)=\otimes_v R_{Q|P}(\pi_v,z)$ is the product
of the locally defined normalized intertwining operators and 
$\pi=\otimes_v\pi_v$
\cite[$\S$ 6]{Ar9}, (cf.~\cite[(2.17)]{Mu2}). In many cases, the 
normalizing factors can be expressed in terms automorphic $L$-functions 
\cite{Sh1}, \cite{Sh2}.

For any $P,Q\in\cP(M)$ there exists a sequence of parabolic subgroups 
$P_0,...,P_k$ and roots $\alpha_1,...,\alpha_k\in\Sigma_M$ such that $P=P_0$, 
$Q=P_k$, and $P_{i-1}|^{\alpha_i}P_i$ for $i=1,...,k$. By the product rule for
intertwining operators we have
\begin{equation}\label{prod-form}
M_{Q|P}(\pi,\lambda)=M_{P_k|P_{k-1}}(\pi,\lambda)\circ
M_{P_{k-1}|P_{k-2}}(\pi,\lambda)\circ\cdots
\circ M_{P_1|P_0}(\pi,\lambda).
\end{equation}
Thus the study of the operators $M_{Q|P}(\pi,\lambda)$ is reduced to the case where
$Q,P\in\cP(M)$ are adjacent along some root $\alpha\in\Sigma_M$. Let
\begin{equation}\label{norm-factors}
n_{Q|P}(\pi,\lambda):=\prod_{\alpha\in\Sigma_P\cap \Sigma_{\bar Q}} 
n_\alpha(\pi,\lambda(\alpha^\vee))
\end{equation}
The product is a meromorphic function of $\lambda\in\af_{M,\C}^\ast$.
Then $M_{Q|P}(\pi,\lambda)$ is normalized by $n_{Q|P}(\pi,\lambda)$, i.e.,
\begin{equation} \label{normalization1}
M_{Q|P}(\pi,\lambda)\circ j_P=n_{Q|P}(\pi,\lambda)\cdot j_Q\circ(\Id\otimes R_{Q|P}(\pi,\lambda)).
\end{equation}
Recall that $\pi=\otimes_v\pi_v$, where $\pi_v\in\Pi(M(F_v))$. If $\H_P(\pi_v)$
is the Hilbert space of the induced representation $\Ind_{P(F_v)}^{G(F_v)}(\pi_v)$,
then one has
\[
\H_P(\pi)\cong\bigotimes_v\H_P(\pi_v)
\]
and, with respect to this isomorphism, it follows that $R_{Q|P}(\pi,\lambda)$
is the product of the corresponding local normalized intertwining operators 
\begin{equation}\label{norm-intertw-op2}
R_{Q|P}(\pi,\lambda)=\otimes_v R_{Q|P}(\pi_v,\lambda)
\end{equation}
\cite{Ar4}, \cite[$\S$ 6]{Ar9}, \cite[$\S$ 2]{Mu2}.

\section{Normalizing factors} 
\setcounter{equation}{0}

In this section we consider the global normalizing factors of intertwining
operators. The goal is to estimate the number of singular hyperplanes of
normalizing factors which intersect a given compact set.
The normalizing factors can be expressed in terms of $L$-functions.
To begin with we recall some basic facts about $L$-functions. As above, we
assume that $G$ is a reductive group over a number field $F$. Recall that
$A_G=T_{G_1}(\R)^0$, where $T_{G_1}$ is the $\Q$-split part of the connected
component of the center of $G_1=\Res_{F/\Q}(G)$, viewed as a subgroup of
$T_{G_1}(\A_\Q)$ and hence of $G(\A_F)$. 

Recall that we denote by $\Pi_{\di}(G(\A))$ the set of equivalence classes of
automorphic representations of $G(\A)$ which occur in the discrete spectrum of
$L^2(\Ag G(F)\bs G(\A))$.
For any $\pi=\otimes_v\pi_v\in\Pi_{\di}(G(\A))$ let $S(\pi)$ be the finite set of
places of $F$ containing all archimedean places and such that for each finite
place $v\in S(\pi)$ at least one of the following conditions holds:
\begin{enumerate}
\item $v$ is archimedean.
\item $F/\Q$ is ramified at $v$.
\item $G$ is ramified at $v$, i.e., either $G$ is not quasi-split over $F_v$ or
$G$ does not split over an unramified extension of $F_v$.
\item For every hyperspecial maximal compact subgroup $K_v$ of $G(F_v)$, $\pi_v$
does not have a nonzero vector which is invariant under $K_v$.
\end{enumerate}
Let $S_\infty$ denote the set of archimedean places of $F$ and let $S_f(\pi)$
denote the set of non-archimedean places in $S(\pi)$. Thus $S(\pi)=S_\infty\cup
S_f(\pi)$. For any $v\in S_f(\pi)$ let $q_v$ denote the order of the residue
field of $F_v$. Let $S_{\Q,f}(\pi)$ be the set of rational primes which lie
below the primes in $S_f(\pi)$. Also set
$S_\Q(\pi):=\{\infty\}\cup S_{\Q,f}(\pi\}$.

Let $W_F$ be the Weil group of $F$ and let $^L G$ be the Langlands $L$-group
of $G$ \cite{Bo2}. Let $r\colon ^L G\to \GL(N,\C)$ be a continuous and 
$W_F$-semisimple $N$-dimensional complex representation of $^L G$. For any
$\pi\in\Pi_{\di}(G(\A))$ and any place $v$ of $F$ with $v\not\in S(\pi)$
let $t_{\pi_v}\in ^L G$ be the Hecke-Frobenius parameter of $\pi_v$. Then the
local $L$-function $L_v(s,\pi,r)$ is defined by
\begin{equation}\label{local-l}
L_v(s,\pi,r):=\det\left(\Id-r(t_{\pi_v})q_v^{-s}\right)^{-1}.
\end{equation}
Since $\pi$ is unitary, the $|r(t_{\pi_v})|$ are bounded by $q_v^c$, where $c$ 
depends only on $G$ and $r$, \cite{Bo2}, \cite{La2}. Therefore, for $S\supset 
S(\pi)$ the partial $L$-function
\begin{equation}
L^S(s,\pi,r):=\prod_{v\not\in S} L_v(s,\pi,r)
\end{equation}
converges absolutely and uniformly on compact subsets of $\Re(s)>c+1$. One of
the goals of the Langlands program is to show that each of these $L$-functions
admits a meromorphic extension to the entire complex plane and satisfies a
functional equation. This is far from being proved. In 
\cite[Definition 2.1]{FL1}, Finis and Lapid formulated a precise version of the
expected functional equation. According to this definition, $(G,r)$ has
property {\bf (FE)}, if for any $\pi\in\Pi_{\di}(G(\A_F)$ the partial
$L$-function $L^{S(\pi)}(s,\pi,r)$ admits a meromorphic continuation to $\C$
with a functional equation of the form
\begin{equation}\label{funct-equ3}
L^{S(\pi)}(s,\pi,r)=\left(\prod_{p\in S_\Q(\pi)}\gamma_p(s,\pi,r))\right)
L^{S(\pi)}(1-s,\pi,r^\vee),
\end{equation}
where for each $p\in S_{\Q,f}(\pi)$, $\gamma_p(s,\pi,r)=R_p(p^{-s})$ for some
rational function $R_p$ and
\begin{equation}\label{gamma-fact-inf}
\gamma_\infty(s,\pi,r)=C_\infty\prod_{i=1}^m\frac{\Gamma_\R(1-s+\alpha_i^\vee)}
{\Gamma_\R(s+\alpha_i)}
\end{equation}
for certain parameters $\alpha_1,...,\alpha_m,\alpha_1^\vee,...,\alpha_m^\vee\in
\C$ and a constant $C_\infty$. By \cite[Lemma 2.2]{FL1} the parameters
$\alpha_1^\vee,...,\alpha_m^\vee$ are determined by $\alpha_1,...,\alpha_m$.
Moreover, the integer $m$ is uniquely determined. The parameters $\alpha_1,...,
\alpha_m$ are said to be {\it reduced}, if $\alpha_i+\alpha_j$ is not a negative
odd integer for any $1\le i,j\le m$. By \cite[Lemma 2.2]{FL1} one may choose the
parameters $\alpha_1,...,\alpha_m$ to be reduced. Assuming that this is
satisfied, Finis and Lapid introduce the reduced $L$-factor at the Archimedean
place by
\begin{equation}\label{red-l-fct-inf}
L_{\infty}^{\red}(s,\pi,r):=\prod_{i=1}^m \Gamma_\R(s+\alpha_i).
\end{equation}
Now let $p\in S_{\Q,f}(\pi)$. Then by \cite[(2.7)]{FL1}, 
$\gamma_p(s,\pi,r)$ can be written in a unique way as
\begin{equation}
\gamma_p(s,\pi,r)=c_pp^{\left(\frac{1}{2}-s\right){\mathfrak e}_p(\pi,r)}P_p(p^{-s})/
\bar P_p(p^{s-1}),
\end{equation}
where $c_p\in\C^\ast$, ${\mathfrak e}_p(\pi,r)\in\Z$, and $P_p$ is a polynomial with
$P_p(0)=1$ such that no zeros $\alpha$ and $\beta$ of $P_p$ satisfy $\alpha
\bar\beta=p^{-1}$. Then Finis and Lapid define the reduced $L$-factor at $p$ by
\begin{equation}\label{red-l-fct-finite}
L_p^{\red}(s,\pi,r):=P_p(p^{-s})^{-1},\quad p\in S_{\Q,f}(\pi),
\end{equation}
and introduce the {\it reduced completed $L$-function} by
\begin{equation}\label{red-l-fct}
L^{\red}(s,\pi,r):=\bigl(\prod_{p\in S_\Q(\pi)} L_p^{\red}(s,\pi,r)\bigr)
L^{S(\pi)}(s,\pi,r).
\end{equation}
There is also a corresponding reduced epsilon factor $\epsilon^{\red}(s,\pi,r)$,
which is defined by
\begin{equation}\label{red-eps-factor}
\epsilon^{\red}(s,\pi,r)=c_\infty\prod_{p\in S_{\Q,f}(\pi)} c_p
p^{\left(\frac{1}{2}-s\right){\mathfrak e}_p(\pi,r)}={\mathfrak n}(\pi,r)^{\frac{1}{2}-s}
\prod_{p\in S_\Q(\pi)}c_p,
\end{equation}
where
\begin{equation}\label{red-eps-fct2}
{\mathfrak n}(\pi,r)=\prod_{p\in S_{\Q,f}(\pi)} p^{{\mathfrak e}_p(\pi,r)}.
\end{equation}
Then the functional equation \eqref{funct-equ3} becomes
\begin{equation}\label{funct-equ-2}
L^{\red}(s,\pi,r)=\epsilon^{\red}(s,\pi,r)L^{\red}(1-s,\pi,r^\vee).
\end{equation}
In \cite[Definition 2.4]{FL1} a stronger version of property {\bf (FE)}
is introduced. The pair $(G,r)$ is said to satisfy property
{\bf (FE+)}, if it satisfies {\bf (FE)} and in addition some uniformity
conditions for $\gamma_\infty$ and $P_p$ are fulfilled. For the precise
statement see \cite[Definition 2.4]{FL1}.

The normalizing factors are described in \cite[Sect. 3]{FL1}. To recall the
description, we need to introduce some notation. Let $M\in\cL$
and $\alpha\in\Sigma_M$. Let $\tilde M_\alpha$ be the Levi subgroup of $M$ of
co-rank one, defined in \cite[p. 254]{FL1}, together with the map
$p^{\sico}\colon \tilde M^{\sico}_\alpha\to\tilde M_\alpha$, which is also defined
in \cite[p. 254]{FL1}. Furthermore, let $U_\alpha$ be the unipotent subgroup of
$G$ corresponding to  $\alpha$. Thus the eigenvalues
of $T_M$ acting on the Lie algebra of $U_\alpha$ are positive integer multiples
of $\alpha$. The adjoint action of $^L M$ on $\Lie(^L U_\alpha)$ factors through
the composed homomorphism $^L M\to ^L \tilde M_\alpha$. The contragredient
of the adjoint representation of $^L\tilde M_\alpha$ on $\Lie(^L U_\alpha)$ is
decomposed as $\oplus_{j=1}^l r_j$ into irreducible representations $r_j$.

By T. Finis and E. Lapid \cite[Definition 3.4]{FL1}, $G$ satisfies
property (L), if for any standard Levi subgroup $M$, any $\alpha\in\Sigma_M$,
and any irreducible constituent $r=r_j$ as above, the pair $(\tilde M_\alpha,r)$
satisfies properties {\bf (FE+)} \cite[Definition 2.4]{FL1} and the
conductor condition {\bf (CC)} \cite[Definition 2.9]{FL1}.

Assume that $G$ satisfies property {\bf (L)}. Then one can describe the
normalizing
factors in terms of $L$-functions. Let $\pi\in\Pi_{\di}(M(\A))$ and let
$n_\alpha(\pi,s)$ be the normalizing factor as in \eqref{normalization}. First
note that $n_\alpha(\pi,s)$ satisfies the functional equation
\begin{equation}\label{funct-equ-5}
n_\alpha(\pi,s)\overline{n_\alpha(\pi,-\bar{s})}=1.
\end{equation}
Next recall that for $\Re(s)\gg 0$, $n_\alpha(\pi,s)$ factorizes as 
\begin{equation}
n_\alpha(\pi,s)=\prod_v n_{\alpha,v}(\pi_v,s).
\end{equation}
By \cite[Lemma 2.13]{FL1} there exist $\sigma\in\Pi_{\di}(\tilde M_\alpha(\A))$
and a character $\chi$ of $\tilde M_\alpha(\A)$, which is trivial on
$\tilde M_\alpha(F) p^{\sico}(\tilde M_\alpha^{\sico})$, such 
that $\sigma\chi$ is a subrepresentation of $\pi|_{\tilde M_\alpha(\A)}$. Let 
\begin{equation}\label{fct-m}
m(\sigma,s):=\prod_{j=1}^l\frac{\epsilon^{\red}(1,\sigma,r_j)
L^{\red}(js,\sigma,r_j)}{L^{\red}(js+1,\sigma,r_j)}.
\end{equation}
Then at the end of the proof of Proposition 3.8 in \cite[p. 259]{FL1} it has 
been shown that there exists $C>0$ such that
\begin{equation}\label{norm-fact2}
n_\alpha(\pi,s)=C\cdot m(\sigma,s)\cdot\prod_{j=1}^l\prod_{p\in S_\Q(\sigma)}
\frac{L_p^{\red}(js+1,\sigma,r_j)}{L_p^{\red}(js,\sigma,r_j)}
\prod_{v\in S(\sigma)} n_{\alpha,v}(\pi,s).
\end{equation}
We use this formula to estimate the number of poles of $n_\alpha(\pi,s)$. 
First we consider the two finite products. For this purpose we need to estimate
the cardinality of $S(\sigma)$ and $S_\Q(\sigma)$. Recall that the level of 
$\sigma$ is defined as $\level(\sigma)=N(\nf)$, where $\nf$ is the largest
ideal of $\cO_F$ such that $\sigma^{\K(\nf)\cap \tilde M_\alpha}\neq 0$. We obviously
have
\[
|S(\sigma)|, |S_\Q(\sigma)|\le C(1+\log\level(\sigma))
\]
for some constant $C>0$ which is independent of $\sigma$. Furthermore, recall
that by \cite[Sec. 2.3]{FL1},
$\level(\pi;p^{\sico})$ is defined as  $\level(\pi;p^{\sico})=N(\nf)$, where
$\nf$ is the largest ideal in $\cO_F$ such that
$\pi^{\K(\nf)\cap p^{\sico}(\tilde M^{\sico}_\alpha)}\neq 0$. 

By \cite[Lemma 2.13]{FL1}
there exists $N_1\in\N$, which depends only on $p^{\sico}$ and $G$, such that
for every $\pi\in\Pi_{\di}(M(\A))$ the corresponding representation $\sigma
\in\Pi_{\di}(\tilde M_\alpha(\A))$ is such that $\level(\sigma)$ divides
$N_1\level(\pi;p^{\sico})$. Thus there exists $C>0$ such that
\begin{equation}\label{S-estim}
|S_{\Q}(\sigma)|,|S(\sigma)|\le C\log\level(\pi;p^{\sico})
\end{equation}
for all $\pi$ and $\sigma$ which are related as above.

Now consider the last product on the right hand side of \eqref{norm-fact2}.
Let $v\in S_f(\sigma)$. By \cite[p. 255, (1)]{FL1},
$n_{\alpha,v}(\pi,s)$ is a rational function in $X=q_v^{-s}$, whose degree is
bounded in terms of $G$ only and which is regular and non-zero at $X=0$. 
Hence the poles of $n_{\alpha,v}(\pi,s)$ form a finite union of
arithmetic progressions with imaginary difference and the number of progressions
is bounded by a constant that depends only on $G$. 

If $v\in S_\infty(\sigma)$, then by \cite[p. 255, (2)]{FL1} we have
\begin{equation}
n_{\alpha,v}(\pi,s)=c_v\prod_{i=1}^{N_v}\frac{\Gamma_\R(j_is+\alpha_i)}
{\Gamma_\R(j_is+\alpha_i+1)},
\end{equation}
where $c_v\neq 0$, $\alpha_1,...,\alpha_{N_v}\in\C$ and the integers $N_v\ge 1$
and $j_i\ge 1$, $i=1,...,N_v$ are bounded in terms of $G$ only.
Recall that
$\Gamma(z)$ has no zeros and the poles are simple and occur at the negative
integers. Let $R>0$. It follow that the number of 
poles of $n_{\alpha,v}(\pi,s)$ in a fixed half-strip $|\Im(s)|\le R$,
$\Re(s)\ge -R$ is bounded by a constant independent of $\pi$. 
Thus by \eqref{S-estim} it follows that for every $R>0$ there exists $C_1>0$,
which is independent of $\pi$, such that the number of poles in the half-strip
$|\Im(s)|\le R$, $\Re(s)\ge -R$, counted with their order, of the last product 
is bounded by $C_1\log\level(\pi;p^{\sico})$.

Next we deal with the product over 
$S_\Q(\sigma)$,  Let $p\in S_{\Q,f}(\sigma)$. By \eqref{red-l-fct-finite},
there exist a polynomial $P_p(x;\sigma,r_j)$ such that 
$L_p^{\red}(s,\sigma,r_j)=P_p(p^{-s};\sigma,r_j)^{-1}$. By definition, $(G,r_j)$
satisfies property {\bf (FE+)} \cite[Definition 2.4]{FL1}. By (2) of this
definition, the degree of $P_p(x;\sigma,r_j)$ is bounded
in terms of $(G,r_j)$ only. Thus the poles of $L_p(s,\sigma,r_j)$ form a finite
union of arithmetic progressions with imaginary difference and the number of
progressions is bounded by a constant that depends only on $(G,r_j)$. Hence
for every $R>0$ there exists $C>0$, which depends only on $(G,r_j)$,
 such that the number of poles of $L_p(s,\sigma,r_j)$ in the 
strip $|\Im(s)|\le R$ is bounded by $C$. For $p=\infty$ we use
\eqref{red-l-fct-inf}. By \cite[Definition 2.4, (3)]{FL1}, there exists
$\beta\in\R$ which depends only on $(G,r_j)$ such that the reduced
parameters $\alpha_i$ satisfy
$\Re(\alpha_i)\ge -\beta$, $i=1,...,l$,  and $\gamma_\infty(s,\sigma,r_j)$ has
no zeros in $\Re(s)>\beta$. So it follows as above, that the number of poles
of $L_p^{\red}(js+1,\sigma,r_j)/L_p^{\red}(js,\sigma,r_j)$ in the half-strip
$|\Im(s)|\le R$, $\Re(s)\ge -R$, counted with their order, is bounded by a
constant independent of $\pi$. Using \eqref{S-estim} it follows that for each
$R>0$ there exists $C_2>0$ such that the number of poles of the product over
$S_{\Q}(\sigma)$, counted with their order, in the half-strip
$|\Im(s)|\le R$, $\Re(s)\ge -R$, is bounded by $C_2\log\level(\pi;p^{\sico})$. 

So it remains to consider $m(\sigma,s)$. Let $r:=r_j$ for some $j$ and let
\begin{equation}\label{lambda1}
\Lambda(s,\sigma,r):=\nf(\sigma,r)^{s/2}L^{\red}(s,\sigma,r),
\end{equation}
where $\nf(\sigma,r)$ is defined by \eqref{red-eps-fct2}
Then, using functional equation \eqref{funct-equ-2} and the definition of the
epsilon factor by \eqref{red-eps-factor}, it follows that $\Lambda(s,\sigma,r)$
satisfies
\begin{equation}\label{funct-equ-lambda}
\Lambda(s,\sigma,r)=\epsilon^{\red}(\frac{1}{2},\sigma,r)\overline{\Lambda(
1-\bar s,\sigma,r)}.
\end{equation}
By \eqref{red-eps-factor} and \eqref{lambda1} we get
\begin{equation}
\frac{\Lambda(s,\sigma,r)}{\overline{\Lambda(-\bar s,\sigma,r)}}=
\frac{\epsilon^{\red}(1,\sigma,r)L^{\red}(s,\sigma,r)}{L^{\red}(s+1,\sigma,r)}.
\end{equation}

Thus by the definition \eqref{fct-m} it follows that
\begin{equation}\label{m-prod}
m(\sigma,s)=\prod_{j=1}^l\frac{\Lambda(js,\sigma,r_j)}
{\overline{\Lambda(-j\bar s,\sigma,r_j)}}.
\end{equation}
As explained in the proof of \cite[Proposition 2.6]{FL1}, $\Lambda(s,\sigma,r)$
is the quotient of two holomorphic functions of order one. Therefore 
$\Lambda(s,\sigma,r)$ admits a Hadamard factorization
\begin{equation}\label{hadam-fact}
\Lambda(s,\sigma,r)=e^{a+bs}s^{n(0)}\prod_{\rho\neq0}
\bigl[(1-s/\rho)e^{s/\rho}\bigr]^{n(\rho)},
\end{equation}
where $a,b\in\C$, the product ranges over the zeros and poles of $\Lambda(s)$
different from 0, and $n(\rho)$ is the order of the function $\Lambda(s)$ at
$s=\rho$. In the poof of \cite[Proposition 2.6]{FL1} it was shown that the
conditions of property {\bf (FE+)}, \cite[Definition 2.4]{FL1}, together with
the functional equation 
\eqref{funct-equ-lambda} imply that there exists $A\ge 1$, depending only
on $G$ and $r$,  such that all zeros and poles of $\Lambda(s,\sigma,r)$ lie
in the strip $1-A\le\Re(s)\le A$. Moreover, by \cite[(2.12)]{FL1} we have for
$T\ge 0$
\begin{equation}\label{sum-order-poles}
\sum_{\rho\colon |\Im(\rho)-T|<2}|n(\rho)|\ll\log\nf(\sigma,r)+\sum_{p\in S_{\Q,f}(\sigma)}
\log p+\log\cf_\infty(\sigma,r)+\log(1+|T|)+1,
\end{equation}
where  $\cf_\infty(\pi,r)$ is the archimedean conductor defined by
\cite[(2.6)]{FL1} and ${\mathfrak n}(\sigma,r)$ the finite conductor 
\eqref{red-eps-fct2}.
Since we assume that $G$ satisfies property {\bf (L)}, $(\tilde M_\alpha,r_j)$ 
satisfies property (CC). Let $\Lambda(\pi_\infty;p^{\sico})$ be defined by
\cite[(2.18)]{FL1}. Then by \cite[(2.14), (2.15)]{FL1} and
\cite[Lemma 2.13]{FL1} we obtain
\begin{equation}\label{sum-order-poles-1}
\sum_{\rho\colon|\Im(\rho)-T|<2} |n(\rho)|\ll \log(|T|+2)+
\log\level(\pi;p^{\sico})+\log\Lambda(\pi_\infty;p^{\sico}).
\end{equation}

We combine \eqref{sum-order-poles-1} with the results above concerning the
other factors occurring in \eqref{norm-fact2}. Note that by the functional
equation \eqref{funct-equ-5}, the poles of $n_\alpha(\pi,s)$ are contained
in a strip $|\Re(s)|\le C$ for some $C>0$. We can summarize our results as
follows.
Denote by $\Sigma_\alpha(\pi)$ the poles of $n_\alpha(\pi,s)$. Given $\rho\in
\Sigma_\alpha(\pi)$, denote by $n(\rho)$ its order. Then combined with the
results above concerning the other factors occurring in \eqref{norm-fact2},
we obtain the following proposition.
\begin{prop}
Assume that $G$ satisfies property (L). Let $M\in\cL$, $\pi\in\Pi_{\di}(M(\A))$, and $\alpha\in\Sigma_M$. Let $\Sigma_\alpha(\pi)$ be the set of poles of
$n_\alpha(\pi,s)$ and for any $\rho\in\Sigma_\alpha(\pi)$ denote by $n(\rho)$
the order of the pole $\rho$. Then for every $R>0$ there exist $C>0$ such that
\begin{equation}\label{poles-norm-fct}
\sum_{\rho\in\Sigma_\alpha(\pi),|\Im(\rho)|<R} |n(\rho)|\le C(1 +
\log\level(\pi;p^{\sico})+\log\Lambda(\pi_\infty;p^{\sico})).
\end{equation}
\end{prop}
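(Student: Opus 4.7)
The proof proposal is essentially to collect the estimates that were developed in the preceding discussion, applied to each factor in the product formula \eqref{norm-fact2}, and then to sum them up. The plan is to bound the pole counts of each of the three factors separately in the half-strip $|\Im(s)|<R$, $\Re(s)\ge -R$, and then combine. Since the functional equation \eqref{funct-equ-5} forces the poles of $n_\alpha(\pi,s)$ to lie in a fixed vertical strip $|\Re(s)|\le C_0$, controlling poles with $|\Im(s)|<R$ can be reduced to controlling poles in such a half-strip (and by symmetry also in $\Re(s)\le R$), so no generality is lost.

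First, I would handle the finite product $\prod_{v\in S(\sigma)} n_{\alpha,v}(\pi,s)$. At each non-archimedean $v\in S_f(\sigma)$, the factor is a rational function in $q_v^{-s}$ of degree bounded purely in terms of $G$, so its poles in $|\Im(s)|<R$ contribute $O(1)$; at each archimedean place the factor is an explicit ratio of $\Gamma_\R$-values and again has $O_R(1)$ poles in the half-strip. The cardinality estimate \eqref{S-estim} then gives a bound $O(\log\level(\pi;p^{\sico}))$ for this factor's total contribution. The same argument applies verbatim to $\prod_{p\in S_\Q(\sigma)}L_p^{\red}(js+1,\sigma,r_j)/L_p^{\red}(js,\sigma,r_j)$: for finite $p$, property \textbf{(FE+)} bounds $\deg P_p(x;\sigma,r_j)$ independently of $\sigma$, and the archimedean factor $L_\infty^{\red}$ is a product of bounded length of shifted $\Gamma_\R$ factors with reduced parameters whose real parts are bounded below by property \textbf{(FE+)}(3); combined with \eqref{S-estim} this again contributes $O(\log\level(\pi;p^{\sico}))$.

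The main work is the factor $m(\sigma,s)$, and this is where the input of property \textbf{(CC)} enters. Using \eqref{m-prod}, it suffices to control the zeros and poles of $\Lambda(js,\sigma,r_j)$ and $\overline{\Lambda(-j\bar s,\sigma,r_j)}$ in the relevant strip; by the Hadamard factorization \eqref{hadam-fact} together with the fact (from the proof of \cite[Proposition 2.6]{FL1}) that all zeros and poles lie in $1-A\le\Re(s)\le A$, it is enough to sum $|n(\rho)|$ over $\rho$ with $|\Im(\rho)|<R'$ for a suitable $R'=R'(R,j)$. The bound \eqref{sum-order-poles-1}, which already incorporates the conductor estimate \cite[Lemma 2.13]{FL1} translating between the level of $\sigma$ and $\level(\pi;p^{\sico})$, gives
\begin{equation*}
\sum_{\rho:\,|\Im(\rho)-T|<2}|n(\rho)|\ll \log(|T|+2)+\log\level(\pi;p^{\sico})+\log\Lambda(\pi_\infty;p^{\sico}).
\end{equation*}
Covering the compact $\Im$-window $|\Im(\rho)|<R'$ by $O_R(1)$ translates of the interval $(T-2,T+2)$ yields the bound $O_R(1+\log\level(\pi;p^{\sico})+\log\Lambda(\pi_\infty;p^{\sico}))$ for the $m(\sigma,s)$-contribution.

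Finally, assembling the three estimates and noting that a pole of $n_\alpha(\pi,s)$ can only arise from a pole of one of the three factors (here absorbing the harmless possibility of cancellation with a zero of another factor by using the upper bound $|n(\rho)|\le $ sum of the orders of the contributing factors), and using monotonicity in $R$ to replace the half-strip bound by a bound on $|\Im(\rho)|<R$, we obtain \eqref{poles-norm-fct}. The hardest conceptual step is the $m(\sigma,s)$ estimate, where the use of property \textbf{(L)} (via \textbf{(FE+)} and \textbf{(CC)}) is essential; the remaining contributions are routine given \eqref{S-estim}.
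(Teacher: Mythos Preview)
Your proposal is correct and follows essentially the same approach as the paper: the paper's proof is precisely the discussion preceding the proposition, which decomposes $n_\alpha(\pi,s)$ via \eqref{norm-fact2}, bounds the pole contributions of the local products using \eqref{S-estim} together with the degree bounds from property \textbf{(FE+)}, and handles $m(\sigma,s)$ via the Hadamard factorization and \eqref{sum-order-poles-1}, then invokes the functional equation \eqref{funct-equ-5} to confine poles to a vertical strip. Your write-up is in fact slightly more explicit than the paper's in justifying the passage from the half-strip bound to the $|\Im(\rho)|<R$ bound and in acknowledging the (harmless) possibility of cancellation among factors.
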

Let $\pi\in\Pi_{\di}(M(\A))$. Let $W_P(\pi_\infty)$ be the set of minimal $K_\infty$-types of
$\Ind_{P(\R)}^{G(\R)}(\pi_\infty)$. Then $W_P(\pi_\infty)$ is a non empty finite subset of $\Pi(K_\infty)$.
Let $\lambda_{\pi_\infty}$ be the Casimir eigenvalue of $\pi_\infty$ and for each
$\tau\in\Pi(K_\infty)$, let $\lambda_\tau$ be the Casimir eigenvalue of $\tau$.
Put
\begin{equation}\label{lambda-1}
\Lambda_{\pi_\infty}:=\min_{\tau\in W_P(\pi_\infty)}\sqrt{\lambda_{\pi_\infty}^2+\lambda_\tau^2}.
\end{equation}
Then by \cite[(10)]{FLM2} one has
\begin{equation}\label{lambda-2}
\Lambda(\pi_\infty;p^{\sico})\ll_G 1+\Lambda^2_{\pi_\infty}.
\end{equation}
Let $K_f$ be an open compact subgroup of $G(\A_f)$. Put
\begin{equation}\label{repr-subset1}
\Pi(M(\A);K_f):=\{\pi\in\Pi(M(\A)\colon \pi_f^{K_f\cap M(\A_f)}\neq 0\}.
\end{equation}
Furthermore, given $\nu\in\Pi(\K_\infty)$, let
\begin{equation}\label{repr-subset2}
\Pi(M(\A);K_f,\nu)=\{\pi\in\Pi(M(\A);K_f)\colon [\Ind_{P(\R)}^{G(\R)}(\pi_\infty)|_{K_\infty}\colon\nu]>0\}
\end{equation}
and put
\begin{equation}\label{repr-subset-3}
\Pi_{\di}(M(\A);K_f,\nu)=\Pi_{\di}(M(\A))\cap \Pi(M(\A);K_f,\nu).
\end{equation}
Now recall the definition of $\level(\pi;p^{\sico})$ \cite[Sec. 2.3]{FL1}.
It follows that there exists $C>0$ such that all for $\pi\in\Pi(M(\A);K_f)$
we have $\level(\pi;p^{\sico})\le C$. Furthermore, by \eqref{lambda-1} there
exists $C_1>0$ such that for all $\pi\in\Pi(M(\A);K_f,\nu)$ one has
$\Lambda_{\pi_\infty}^2\le C_1(1+\lambda_{\pi_\infty}^2)$, and it follows from
\eqref{lambda-2} that in this case $\Lambda(\pi_\infty;p^{\sico})\ll_G
(1+\lambda_{\pi_\infty}^2)$. In this way we get
\begin{cor}\label{cor-est-poles}
Assume that $G$ satisfies property (L). Let $K_f$ be an open compact subgroup of
$G(\A_f)$ and $\nu\in\Pi(K_\infty)$. Let $M\in\cL$ and $\alpha\in\Sigma_M$.
Let the notation be as above. For every $R>0$ there exist $C>0$ such that
\begin{equation}\label{poles-norm-fct1}
\sum_{\rho\in\Sigma_\alpha(\pi),|\Im(\rho)|<R} |n(\rho)|\le
C(1 +\log(1+\lambda_{\pi_\infty}^2))
\end{equation}
for all $\pi\in\Pi_{\di}(M(\A);K_f,\nu)$.
\end{cor}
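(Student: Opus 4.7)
The plan is to deduce the corollary directly from the preceding proposition by bounding, uniformly in $\pi\in\Pi_{\di}(M(\A);K_f,\nu)$, the two $\pi$-dependent quantities $\log\level(\pi;p^{\sico})$ and $\log\Lambda(\pi_\infty;p^{\sico})$ appearing in \eqref{poles-norm-fct}. The first is controlled by $K_f$ alone, while the second is controlled by $\lambda_{\pi_\infty}^2$ together with data depending only on $\nu$.

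First I would control the level. Since $K_f$ is an open compact subgroup of $G(\A_f)$, by the discussion in Section \ref{sect-arithm-mfds} it contains a principal congruence subgroup $\K(\nf_0)$ for some ideal $\nf_0\subset\cO_F$. For any $\pi\in\Pi_{\di}(M(\A);K_f,\nu)$ the finite component $\pi_f$ admits a nonzero $K_f\cap M(\A_f)$-invariant vector, and by the definition of $\level(\pi;p^{\sico})$ recalled in \cite[Sec.~2.3]{FL1} this forces an upper bound on $\level(\pi;p^{\sico})$ by a constant depending only on $K_f$ and $p^{\sico}$. Hence $\log\level(\pi;p^{\sico})=O_{K_f}(1)$ uniformly in $\pi$.

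Next I would handle the archimedean factor. By \eqref{lambda-2} it suffices to bound $\Lambda^2_{\pi_\infty}$. Because $\pi\in\Pi(M(\A);K_f,\nu)$, the fixed $K_\infty$-type $\nu$ occurs in $\Ind_{P(\R)}^{G(\R)}(\pi_\infty)|_{K_\infty}$; choosing a minimal $K_\infty$-type $\tau\in W_P(\pi_\infty)$ with $\lambda_\tau$ dominated by $\lambda_\nu$ (i.e.\ bounded solely in terms of $\nu$), the definition \eqref{lambda-1} gives
\begin{equation*}
\Lambda_{\pi_\infty}^2\le \lambda_{\pi_\infty}^2+\lambda_\tau^2\le C_1(1+\lambda_{\pi_\infty}^2)
\end{equation*}
for a constant $C_1=C_1(\nu)$ independent of $\pi$. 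Applying \eqref{lambda-2} then yields $\Lambda(\pi_\infty;p^{\sico})\ll_G 1+\lambda_{\pi_\infty}^2$, and therefore
\begin{equation*}
\log\Lambda(\pi_\infty;p^{\sico})\ll 1+\log(1+\lambda_{\pi_\infty}^2).
\end{equation*}

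Finally, substituting both bounds into \eqref{poles-norm-fct} of the preceding proposition gives \eqref{poles-norm-fct1} and completes the proof. The only point that is not pure bookkeeping is the existence, for every $\pi\in\Pi(M(\A);K_f,\nu)$, of a minimal $K_\infty$-type $\tau\in W_P(\pi_\infty)$ with Casimir eigenvalue controlled by $\nu$; this is the main ingredient and, as cited above, is available from \cite[(10)]{FLM2} via \eqref{lambda-2}.
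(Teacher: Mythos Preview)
Your proposal is correct and follows essentially the same route as the paper: bound $\level(\pi;p^{\sico})$ uniformly in terms of $K_f$, bound $\Lambda_{\pi_\infty}^2$ by $C_1(1+\lambda_{\pi_\infty}^2)$ via the definition \eqref{lambda-1} and the fact that a minimal $K_\infty$-type of $\Ind_{P(\R)}^{G(\R)}(\pi_\infty)$ has Casimir eigenvalue controlled by the occurring type $\nu$, then apply \eqref{lambda-2} and substitute into \eqref{poles-norm-fct}. The only quibble is your final attribution: the fact that some $\tau\in W_P(\pi_\infty)$ has $\lambda_\tau$ bounded in terms of $\nu$ comes from the very definition of minimal $K_\infty$-type (minimality of $\|\Lambda_\tau+2\rho_K\|$ among all occurring types), not from \cite[(10)]{FLM2}/\eqref{lambda-2}, which only handles the passage from $\Lambda_{\pi_\infty}$ to $\Lambda(\pi_\infty;p^{\sico})$.
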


\section{Logarithmic derivatives of local intertwining operators}
\label{sec-locint} 
\setcounter{equation}{0}

In this section we prove some auxiliary results for local intertwining 
operators. To begin with we recall some facts concerning local intertwining
operators and normalizing factors. Let $M\in\cL$ and $P,Q\in\cP(M)$. Let
$v$ be a place of $F$. If $v$ is finite, let $K_v$ be an open compact subgroup
of $G(F_v)$ and if $v\in S_\infty$, let $K_v$ be a maximal compact subgroup of
$G(F_v)$. 
Let $\pi_v\in\Pi(M(F_v))$. Given $\lambda\in \af^\ast_{M,\C}$, let
$(I_P^G(\pi_v,\lambda),\H_P(\pi_v))$ denote the induced
representation. Let $\H_P^0(\pi_v)\subset\H_P(\pi_v)$ be the subspace of 
$K_v$-finite functions. Let 
\[
J_{Q|P}(\pi_v,\lambda)\colon \H_P^0(\pi_v)\to\H_Q^0(\pi_v)
\]
be the local intertwining operator between the induced representations 
$I_P^G(\pi_v,\lambda)$ and $I_Q^G(\pi_v,\lambda)$ \cite{Sh1}. It is proved 
in \cite{Ar4}, \cite[Lecture 15]{CLL} that there exist scalar valued 
meromorphic functions $r_{Q|P}(\pi_v,\lambda)$ of $\lambda\in\af_{P,\C}^\ast$ 
such that the normalized intertwining operators
\begin{equation}\label{norm-inter}
R_{Q|P}(\pi_v,\lambda)=r_{Q|P}(\pi_v,\lambda)^{-1}J_{Q|P}(\pi_v,\lambda)
\end{equation}
satisfy the conditions $(R_1)-(R_8)$ of Theorem~2.1 of
\cite{Ar4}. We recall some facts about the local normalizing factors. First
assume that $v$ is a finite valuation of $F$ with $q_v\in\N$ the cardinality
of the residue field of $F_v$. Furthermore assume that 
$\dim(\af_M/\af_G)=1$ and $\pi_v$ is square integrable. Let $P\in\cP(M)$ and
let $\alpha$ be the unique simple root of $(P,T_M)$. Then Langlands
\cite[Lecture 15]{CLL} has shown that there exists a rational function 
$V_P(\pi_v,z)$ of one variable such that 
\begin{equation}\label{rat-funct}
r_{\ov P|P}(\pi_v,\lambda)=V_P(\pi_v,q_v^{-\lambda(\widetilde\alpha)}),
\end{equation}
where $\widetilde\alpha\in\af_M$ is uniquely determined by $\alpha$. 
For the construction of $V_P$ see also \cite[Sect. 3]{Mu2}. In this reference,
only the case $\Q_v$ has been discussed. However, the case $F_v$ can be dealt
with in exactly the same way. We need the
following lemma.
\begin{lem}\label{lem-loc-int}
Let $M\in\cL$ be such that $\dim(\af_M/\af_G)=1$.
There exists $C>0$ such that for all $P\in\cP(M)$ and all 
$\pi\in\Pi(M(F_v))$ the number of zeros of the rational function 
$V_P(\pi,z)$ is less than or equal to $C$.
\end{lem}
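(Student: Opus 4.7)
The approach is to identify $V_P(\pi,z)$ with data coming from Harish-Chandra's Plancherel theory for $p$-adic reductive groups of relative semisimple rank one, and then to invoke classical uniform bounds on that data.

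First I would use the fact that, since $\dim(\af_M/\af_G)=1$ and $\pi=\pi_v$ is square integrable, Harish-Chandra's theory yields a scalar identity of the form
\[
J_{P|\ov P}(\pi,\lambda)\circ J_{\ov P|P}(\pi,\lambda)=c_G\,\mu_M(\pi,\lambda)^{-1}\cdot\Id
\]
on $\H_P(\pi)$, where $\mu_M(\pi,\lambda)$ is the Plancherel density and $c_G$ depends only on measure normalizations. Combining this with the defining properties $(R_1)$--$(R_8)$ of \cite{Ar4}---in particular the unitarity of $R_{\ov P|P}$ on the imaginary axis together with the multiplicative relation $R_{P|\ov P}\circ R_{\ov P|P}=\Id$---one concludes
\[
r_{\ov P|P}(\pi,\lambda)\,r_{P|\ov P}(\pi,\lambda)=c_G\,\mu_M(\pi,\lambda)^{-1}.
\]
Together with a functional-equation symmetry relating $r_{P|\ov P}(\pi,\lambda)$ and $r_{\ov P|P}(\pi,-\lambda)$, this determines the analytic structure of $r_{\ov P|P}(\pi,\lambda)$ entirely in terms of that of $\mu_M(\pi,\lambda)$.

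Next I would appeal to the rank-one Plancherel theory of Harish-Chandra and Silberger: for $v$ finite and $\pi$ square integrable on $M(F_v)$, the density $\mu_M(\pi,\lambda)$ is a rational function in the variable $X=q_v^{-\lambda(\widetilde\alpha)}$, and both its numerator and denominator degrees are bounded by a constant depending only on $G$. The underlying point is that the reducibility set of $I_P^G(\pi,\lambda)$ inside a period strip consists of a uniformly bounded number of points, the bound being controlled by the relative root system of $G$ and not by the choice of square-integrable $\pi$. Inserting this into the identity above shows that $V_P(\pi,z)$ is itself a rational function in $z$ whose total degree is bounded uniformly in $\pi$ and $P$; in particular the number of its zeros is at most some constant $C=C(G)$.

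The principal obstacle is the uniform bound on the degree of $\mu_M(\pi,\lambda)$: one must rule out that new poles or zeros of $\mu_M$ appear as $\pi$ ranges over the (infinite) set of square integrable representations of $M(F_v)$. This is a consequence of Harish-Chandra's $c$-function calculus, in which the orders of zeros and poles depend only on the relative root data and not on $\pi$. As already indicated in the excerpt, the explicit construction of $V_P(\pi,z)$ carried out in \cite[Sect.~3]{Mu2} for $F_v=\Q_v$ transfers verbatim to a general local field $F_v$, and it provides a quantitative version of the uniformity above, from which the required constant $C$ can be read off directly.
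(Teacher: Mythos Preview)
Your proposal is correct and follows essentially the same route as the paper's referenced proof in \cite[Lemma~10.1]{MM2}: the normalizing factor $V_P(\pi,z)$ is built directly from the Harish--Chandra $\mu$-function via the rank-one Plancherel theory (cf.\ \cite[Sect.~3]{Mu2}), and the uniform bound on its degree comes from the fact that the poles and zeros of $\mu_M$ are governed by the relative root data of $G$ rather than by $\pi$. One small caution: the product relation $r_{P|\ov P}\,r_{\ov P|P}=c_G\,\mu_M^{-1}$ together with the functional equation constrains but does not by itself determine the individual zero set of $r_{\ov P|P}$, so the weight of your argument rests (as you correctly indicate at the end) on the explicit construction in \cite[Sect.~3]{Mu2}, not on the abstract relations.
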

For the proof see \cite[Lemma 10.1]{MM2}. Again, the proof has been carried out
for $\Q_v$. It extends to $F_v$ without any changes.

The main goal of this section is to estimate the logarithmic derivatives
of the normalized intertwining operators $R_{Q|P}(\pi,\lambda)$. For $\bG=
\GL(n)$ such estimates were derived in \cite[Proposition~0.2]{MS}. The proof
depends on a weak version of the 
Ramanujan conjecture, which is not available in general. Therefore we will
establish only an integrated version of it, which however, is sufficient for
our purpose. For $\pi\in\Pi_{\di}(M(\A))$ denote by $\H_P(\pi)$ the Hilbert
space of the induced representation $I_P^G(\pi,\lambda)$. Furthermore, for 
an open compact subgroup $K_f\subset G(\A_f)$ and $\nu\in\Pi(\K_\infty)$, denote
by $\H_P(\pi)^{K_f}$ the subspace of vectors, which are invariant under $K_f$
and let $\H_P(\pi)^{K_f,\nu}$ denote the $\nu$-isotypical subspace of 
$\H_P(\pi)^{K_f}$. Let $P,Q\in\cP(M)$ be adjacent parabolic subgroups. Then 
$R_{Q|P}(\pi,\lambda)$ depends on a single variable $s\in\C$ and we will
write 
\[
R^\prime_{Q|P}(\pi,s_0):=\frac{d}{ds}R_{Q|P}(\pi,s)\big|_{s=s_0}
\]
for any regular $s_0\in\C$.
\begin{prop}\label{prop-log-der}
Let $M\in\cL$, and let $P,Q\in\cP(M)$ be adjacent parabolic subgroups. Let
$K_f\subset \bG(\A_f)$ be an open compact subgroup and let 
$\nu\in\Pi(\K_\infty)$.
Then there exists $C>0$ such that
\begin{equation}\label{int-log-der}
\int_{\R}\Big\|R_{Q|P}(\pi,iu)^{-1}R^\prime_{Q|P}(\pi,iu)\big|_{\H_P(\pi)^{K_f,\nu}}
\Big\|e^{-tu^2}du\le C t^{-1/2}
\end{equation}
for all $0<t\le 1$ and $\pi\in\Pi_{\di}(M(\A))$ with $\H_P(\pi)^{K_f,\nu}\neq 0$. 
\end{prop}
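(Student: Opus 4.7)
The plan is to exploit the factorization of the normalized intertwining operator over the places of $F$ and to bound each local contribution separately, following the general strategy of \cite{FLM2}. By \eqref{norm-intertw-op2}, $R_{Q|P}(\pi,\lambda)=\otimes_v R_{Q|P}(\pi_v,\lambda)$, so its logarithmic derivative decomposes as a sum of local logarithmic derivatives acting on individual tensor factors. At any finite place $v$ at which both $\pi_v$ and $K_v:=K_f\cap G(F_v)$ are unramified, the operator $R_{Q|P}(\pi_v,\lambda)$ acts trivially on the $K_v$-fixed line in $\H_P(\pi_v)$ by the normalization conditions $(R_1)$--$(R_8)$. Hence, when restricted to $\H_P(\pi)^{K_f,\nu}$, only finitely many places $v\in S$ contribute, where $S$ depends on $K_f$ and $G$ but not on $\pi$.

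At each finite place $v\in S$, I would first reduce, via the multiplicativity of normalized intertwining operators along parabolic induction, to the case where $\pi_v$ is square-integrable and $\dim(\af_M/\af_G)=1$. Then by \eqref{rat-funct}, the normalizing factor $r_{\ov P|P}(\pi_v,\lambda)$ is the rational function $V_P(\pi_v,q_v^{-\lambda(\widetilde\alpha)})$, whose number of zeros is bounded uniformly in $\pi_v$ by Lemma~\ref{lem-loc-int}. The unnormalized logarithmic derivative acts on the finite-dimensional subspace of $K_v$-invariants; after subtracting the scalar logarithmic derivative of $r_{\ov P|P}$, the local normalized logarithmic derivative becomes a periodic function of $u$ (with period proportional to $1/\log q_v$) whose $L^1$ norm over one period is bounded uniformly in $\pi_v$.

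At the archimedean places, I would restrict to the $\nu$-isotypic subspace of $\H_P(\pi_\infty)$, which has dimension bounded in terms of $\nu$. Using the Langlands--Shahidi description of $R_{Q|P}(\pi_\infty,\lambda)$ in terms of ratios of $\Gamma$-factors together with the explicit action on $K_\infty$-finite vectors, the norm of the logarithmic derivative restricted to this subspace is polynomially bounded in $u$ with degree depending only on $G$ and $\nu$, uniformly over all $\pi_\infty$ with $\H_P(\pi_\infty)^\nu\neq0$.

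Combining the local bounds and integrating against the Gaussian weight gives the result via $\int_\R f(u)e^{-tu^2}\,du\ll t^{-1/2}\|f\|_\infty$ for bounded $f$ (handling the periodic non-archimedean contributions) and the standard moment estimate $\int_\R(1+|u|)^k e^{-tu^2}\,du\ll t^{-1/2}$ for $0<t\le1$ (handling the polynomial archimedean contribution). The main obstacle is securing uniformity in $\pi$ at the non-archimedean places, which crucially depends on Lemma~\ref{lem-loc-int} to bound the number of poles of the local normalizing factor independently of $\pi_v$; a secondary subtlety is the archimedean estimate, where one must exploit the $K_\infty$-type structure to obtain polynomial growth of the logarithmic derivative uniformly in $\pi_\infty$.
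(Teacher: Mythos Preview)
Your overall architecture is right: factorize over places, note that only a finite set $S$ of places (depending on $K_f$) plus the archimedean place contribute, and bound each local logarithmic derivative. The non-archimedean treatment via periodicity and a uniform $L^1$ bound over a period is essentially what the paper does (the paper makes this precise via \cite[Corollary~5.18]{FLM2}, bounding the integral over $S^1$ by the number of poles and the degree of the rational operator, both controlled uniformly).

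The archimedean step, however, has a genuine gap. You assert that the norm of the logarithmic derivative of $R_{Q|P}(\pi_\infty,iu)$ on $\H_P(\pi_\infty)^\nu$ is polynomially bounded in $u$, uniformly over all $\pi_\infty$ with $\H_P(\pi_\infty)^\nu\neq0$. This is not known in general and is likely false: the poles of $R_{\oP|P}(\pi_\infty,s)$ can lie arbitrarily close to the imaginary axis as $\pi_\infty$ varies, and near such a pole the logarithmic derivative blows up pointwise. (For $\GL_n$ a weak Ramanujan bound keeps the poles uniformly away from $i\R$, which is exactly the content of \cite[Proposition~0.2]{MS}; the paper explicitly notes this is unavailable here.) The paper avoids any pointwise bound by proving an \emph{integrated} estimate directly: one shows there is a polynomial $b(z)$ of degree $m\le c(1+\|\nu\|)$ (with $c$ depending only on $G$) such that $b(z)R_{\oP|P}(\pi_\infty,z)\big|_{\H_P(\pi_\infty)^\nu}$ is polynomial, and then a Blaschke-product/Bernstein-type inequality (Lemma~\ref{lem-inequ-A}, a variant of \cite[Lemma~5.19]{FLM2}) gives $\int_\R\|A'(iu)\|e^{-tu^2}\,du\le 2\pi m$ outright. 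The point is that each pole contributes a bounded amount to the integral regardless of its distance to $i\R$.

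A second, smaller issue: your closing ``standard moment estimate'' $\int_\R(1+|u|)^k e^{-tu^2}\,du\ll t^{-1/2}$ is incorrect for $k\ge1$; the substitution $v=\sqrt{t}\,u$ gives $t^{-(k+1)/2}$, which would destroy the claimed bound even if the polynomial pointwise estimate were available. So the archimedean contribution must be handled by an argument that produces an $O(1)$ (or at worst $O(t^{-1/2})$) integrated bound directly, as the paper does.
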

\begin{proof}
We may assume that $K_f$ is factorisable, i.e., $K_f=\prod_v K_v$. Let $S$ be
the finite set of finite places such that $K_v$ is not hyperspecial. Since
$P$ and $Q$ are adjacent, by standard properties of normalized intertwining
operators \cite[Theorem~2.1]{Ar4} we may assume that $P$ is a maximal
parabolic subgroup and $Q=\overline P$, the opposite parabolic subgroup to
$P$. By
\cite[Theorem 2.1, (R8)]{Ar4}, $R_{\oP|P}(\pi_v,s)^{K_v}$ is independent of $s$ if
$v$ is finite and $v\notin S$. Thus we have

\begin{equation}\label{log-der-inter1}
\begin{split}
R_{\oP|P}(\pi,s)^{-1}R^\prime_{\oP|P}(\pi,s)\big|_{H_P(\pi)^{K_f,\nu}}&=
R_{\oP|P}(\pi_\infty,s)^{-1}R^\prime_{\oP|P}(\pi_\infty,s)\big|_{\H_P(\pi_\infty)^\nu}\\
&+\sum_{v\in S}R_{\oP|P}(\pi_v,s)^{-1}R^\prime_{\oP|P}(\pi_v,s)\big|_{\H_P(\pi_v)^{K_v}}
\end{split}
\end{equation}
This reduces our problem to the operators at the local places. We distinguish
between the archimedean and the non-archimedean case.

{\bf Case 1:} $v<\infty$. Define $A_v\colon \C\to \End(\cH_P(\pi_v)^{K_v})$ by
\[
A_v(q_v^{-s}):=R_{\oP|P}(\pi_v,s)\big|_{\H_P(\pi_v)^{K_v}}.
\]
This is a meromorphic function with values in the space of endomorphisms of a
finite dimensional vector space. It has the following properties. 
By the unitarity of $R_{\oP|P}(\pi_v,iu)$, $u\in\R$, it follows that $A_v(z)$ is
holomorphic for $z\in S^1$ and satisfies $\|A_v(z)\|\le 1$, $|z|=1$. By
\cite[Theorem~2.1]{Ar4}, the matrix coefficients of $A_v(z)$ are rational 
functions. Recall that the operators $R_{\oP|P}(\pi_v,iu)$ are unitary. As in
\cite[(14)]{FLM2} we get
\begin{equation}
\begin{split}
\int_{\R}\Big\|&R_{\oP|P}(\pi_v,iu)^{-1}R^\prime_{\oP|P}(\pi_v,iu)\big|_{\H_P(\pi_v)^{K_v}}
\Big\|e^{-tu^2}du=\int_\R\left\|R^\prime_{\oP|P}(\pi_v,iu)\Big|_{\H_P(\pi_v)^{K_v}}\right\|e^{-tu^2}du\\
&\le 2\sum_{n=0}^\infty \exp\left(-t\frac{4\pi^2 n^2}{(\log q_v)^2}\right)
\int_0^{\frac{2\pi}{\log q_v}}\left\|R^\prime_{\oP|P}(\pi_v,iu)
\Big|_{\H_P(\pi_v)^{K_v}}\right\|du\\
&\le 2\left(1+\int_0^\infty \exp\left(-t\frac{4\pi^2 x^2}{(\log q_v)^2}\right)dx
\right)\int_0^{\frac{2\pi}{\log q_v}}\left\|R^\prime_{\oP|P}(\pi_v,iu)
\Big|_{\H_P(\pi_v)^{K_v}}\right\|du\\
&=\left(2+\frac{\log q_v}{\pi}\cdot t^{-1/2}\right)\int_{S^1}\left\|A_v^\prime(z)
\right\| |dz|.
\end{split}
\end{equation}
As explained above, $A_v$ satisfies the assumptions of 
\cite[Corollary~5.18]{FLM2}. Denote by $z_1,...,z_m\in \C\setminus S^1$ be the
poles
of $A_v(z)$. Then $(z-z_1)\cdots(z-z_m)A_v(z)$ is a polynomial of degree $n$
with coefficients in $\End(\cH_P(\pi_v)^{K_v})$ and by 
\cite[Corollary~5.18]{FLM2} we get
\begin{equation}\label{est-int1}
\|A_v^\prime(z)\|\le\max\left(\max(n-m,0)+\sum_{j\colon |z_j|>1}
\frac{|z_j|^2-1}{|z_j-z|^2},\; \sum_{j\colon |z_j|<1}\frac{1-|z_j|^2}{|z_j-z|^2}
\right),\quad z\in S^1.
\end{equation}
Now observe that 
\[
\frac{1}{2\pi}\int_{S^1}\frac{1-|z_0|^2}{|z-z_0|^2}|dz|=1.
\]
$z_0\in \C$, $|z_0|<1$. This follows from the fact that
the integrant is the Poisson kernel and so the integral is the unique
harmonic function on the unit disc which is equal to 1 on the boundary. This
is the constant function 1. Hence by \eqref{est-int1} we get
\begin{equation}\label{est-int2}
\int_{S^1}\|A^\prime_v(z)\|\;|dz|\le 2\pi \max(m,n).
\end{equation}

Next we estimate $m$ and $n$. First consider $m$. Let $J_{\oP|P}(\pi_v,s)$ be
the usual intertwining  operator so that
\[
R_{\oP|P}(\pi_v,s)=r_{\oP|P}(\pi_v,s)^{-1}J_{\oP|P}(\pi_v,s),
\]
where $r_{\oP|P}(\pi_v,s)$ is the normalizing factor \cite{Ar4}. By 
\cite[Theorem~2.2.2]{Sh1} there exists a polynomial $p(z)$ with $p(0)=1$ 
whose degree is bounded independently of $\pi_v$, such that 
$p(q_v^{-s})J_{\oP|P}(\pi_v,s)$ is holomorphic on $\C$. To deal with the normalizing factor we use \eqref{rat-funct} together with Lemma~\ref{lem-loc-int} to
count the
number of poles of $r_{\oP|P}(\pi_v,s)^{-1}$. This leads to a bound for $m$
which depends only on $\bG$. To estimate $n$ we fix an open compact subgroup
$K_v$ of $\bG(F_v)$. Our goal is now to estimate the order at $\infty$ of any
matrix coefficient of $R_{\oP|P}(\pi_v,s)$ regarded as a function of
$z=q_v^{-s}$.
Write $\pi_v$ as Langlands quotient $\pi_v=J_R^M(\delta_v,\mu)$ where $R$ is a
parabolic subgroup of $M$, $\delta_v$ a square integrable representation of
$M_R(F_v)$ and $\mu\in(\af^\ast_R/\af^\ast_M)_\C$ with $\Re(\mu)$ in the chamber
attached to $R$. Then by \cite[p. 30]{Ar4} we have
\[
 R_{\oP|P}(\pi_v,s)=R_{\oP(R)|P(R)}(\delta_v,s+\mu)
\]
with respect to the identifications described in \cite[p. 30]{Ar4}. Here $s$ is
identified with a point in $(\af_R^\ast/\af_G^\ast)_\C$ with respect to the 
canonical embedding $\af_M^\ast\subset\af_G^\ast$. Using again the factorization
of normalized intertwining operators we reduce the problem to the case of a 
square-integrable representation $\delta_v$. Moreover $\delta_v$ has to 
satisfy $[I_P^G(\delta_v,s)\big|_{K_v}\colon \one]\ge 1$. By \cite[Lemma~1]{Si}
we have 
\begin{equation}\label{multipl}
[I_P^G(\delta_v,s)\big|_{K_v}\colon \one]\ge 1\Leftrightarrow
[\delta_v\big|_{K_v\cap M(F_v)}\colon\one]\ge 1
\end{equation}
Let $\Pi_2(M(F_v))$ be the space of square-integrable representations of
$M(F_v)$. This space has a manifold structure \cite{HC1}, \cite{Si}.
By \cite[Theorem~10]{HC1} the set of square-integrable representations 
$\Pi_2(M(F_v),K_v)$ of $M(F_v)$ with $[\delta_v\big|_{K_v\cap M(F_v)}
\colon\one]\ge 1$ is a compact subset of $\Pi_2(M(F_v))$. Under the canonical 
action of $i\af_M$, the set $\Pi_2(M(F_v),K_v)$ decomposes into a finite number
of orbits. For $\mu\in i\af_M$ and $\delta_v\in\Pi_2(M(F_v),K_v)$, let 
$(\delta_v)_\mu\in \Pi_2(M(F_v),K_v)$ be the result of the canonical action.
Then it follows that
\[
R_{\oP|P}((\delta_v)_\mu,\lambda)=R_{\oP|P}(\delta_v,\lambda+\mu).
\]
In this way our problem is finally reduced to the consideration of
the matrix coefficients of $R_{\oP|P}(\pi_v,s)\big|_{K_v}$ for a finite number of
representations $\pi_v$. This implies that $n$ is bounded by a constant which is
independent of $\pi_v$. Together with \eqref{est-int2} it follows 
that for each finite place
$v$ of $F$ and each open compact subgroup $K_v$ of $G(F_v)$ there exists 
$C_v>0$ such that
\begin{equation}\label{loc-int-est1}
\int_{\R} \Big\|R_{\oP|P}(\pi_v,iu)^{-1}R^\prime_{\oP|P}(\pi_v,iu)\big|_{\H_P(\pi_v)^{K_v}}
\Big\|e^{-tu^2}du\le C_vt^{-1/2}
\end{equation}
for all $0<t\le 1$ and $\pi_v\in\Pi(M(F_v))$ with
$I_P^G(\pi_v)\big|_{\H_P(\pi_v)^{K_v}}\neq0$.

{\bf Case 2:} $v=\infty$. To begin with we need a modification of
\cite[Lemma 5.19]{FLM2}.
\begin{lem}\label{lem-inequ-A}
Let $z_j\in\C\setminus i\R$, $j=1,...,m$, and let $b(z)=(z-z_1)\cdots (z-z_m)$.
Suppose that $A\colon\C\setminus\{z_1,...,z_m\}\to V$ is such that $\|A(z)\|\le
1$ for all $z\in i\R$ and $b(z)A(z)$ is a polynomial in $z\in\C$ (necessarily of
degree $\le m$) with coefficients in $V$. Then
\[
\int_{\R}\|A^\prime(iu)\| e^{-t u^2} du\le 2\pi m 
\]
for all $0\le t$.
\end{lem}
\begin{proof}
For any $w\in\C$, let $\phi_w(z):=\frac{z+\bar{w}}{z-w}$, and set
\[
\phi_{>}(z)=\sum_{j:\Re(z_j)>0}\phi_{z_j}(z),\quad \phi_{<}(z)=
\sum_{j:\Re(z_j)<0}\phi_{z_j}(z).
\]
Applying \cite[Theorem 4]{BE}, it follows that
\begin{equation}\label{estim-int-A}
\|A^\prime(z)\|\le\max\{|\phi^\prime_{>}(z)|,|\phi^\prime_{<}(z)|\}\le
|\phi^\prime_{>}(z)|+|\phi^\prime_{<}(z)|,\quad z\in i\R.
\end{equation}
Now observe that for $w=x+iy\in\C\setminus i\R$, one has
$|\phi_w^\prime(z)|=\frac{2|x|}{|z-w|^2}=
\frac{2|x|}{x^2+(u-y)^2}$ for $z=iu$, $u\in \R$. So we get
\[
\begin{split}
\int_{\R}|\phi^\prime_w(iu)| e^{-tu^2} du&\le\int_\R\frac{2|x|}{x^2+(u-y)^2}
du=\frac{2}{|x|}\int_\R\frac{1}{1+(\frac{u}{|x|}-\frac{y}{|x|})^2} du\\
&=2\int_\R\frac{1}{1+(u-\frac{y}{|x|})^2} du
=2\int_\R\frac{du}{1+u^2}=2\pi.
\end{split}
\]
Together with \eqref{estim-int-A} the lemma follows.
\end{proof}

As above let $M\in\cL$ with $\dim(\af_M/\af_G)=1$
and $P\in\cP(M)$. Let $\pi_\infty\in\Pi(M(F_\infty))$ and $\nu\in\Pi(\K_\infty)$.
As explained in \cite[Appendix]{MS}, there exist $w_1,...,w_r\in\C$ and 
$m\in\N$ such that the poles of $R_{\oP|P}(\pi_\infty,s)\big|_{\H_P(\pi_\infty)^\nu}$
are contained in $\cup_{j=1}^r\{w_j-k\colon k=1,...,m\}$. Moreover, by
\cite[Proposition~A.2]{MS} there exists $c>0$ which depends only on $G$, such 
that
\begin{equation}\label{est-const}
r\le c,\quad m\le c(1+\|\nu\|).
\end{equation}
Let $A\colon \C\to \H_P(\pi_\infty)^\nu$ be defined by 
\[
A(z):=R_{\oP|P}(\pi_\infty,z)\big|_{\H_P(\pi_\infty)^\nu}
\]
and let $b(z)=\prod_{j=1}^r\prod_{k=1}^m(z-w_j+k)$. Then it follows from ($R_6$) of
\cite[Theorem~2.1]{Ar4} that  $b(z)A(z)$ is a polynomial function. Moreover,
by unitarity of $R_{\oP|P}(\pi_\infty,it)$, $t\in\R$, we have $\|A(it)\|=1$. 
Thus $A(z)$ satisfies the assumptions of \cite[Lemma~5.19]{FLM2}. Thus by
Lemma \ref{lem-inequ-A} and \eqref{est-const} we get
\begin{equation}\label{log-esti-archim}
\begin{split}
\int_\R\Big\|R_{\oP|P}(\pi_\infty,iu)^{-1}R_{\oP|P}^\prime(\pi_\infty,iu)
\big|_{\H_P(\pi_\infty)^\nu}&\Big\|e^{-tu^2} du=\int_\R\| A^\prime(iu)\|e^{-tu^2} du\\
&\le 2\pi r\cdot m\le 2\pi c^2(1+\|\nu\|).
\end{split}
\end{equation} 
Combining \eqref{log-der-inter1}, \eqref{loc-int-est1} and \eqref{log-esti-archim},
the proposition follows.
\begin{remark}
For $G=\GL_n$ it is proved in \cite[Proposition 0.2]{MS} that the corresponding
bounds hold for the derivatives of the local intertwining operators itself.
This follows from a weak version of the Ramanujan conjecture, which implies that
the poles of the local intertwining operators are uniformly bounded away from
the imaginary axis. For the integrated derivatives the distance of the poles
from the imaginary axis does not matter.
\end{remark}
\end{proof}

\section{The residual spectrum} 
\setcounter{equation}{0}

The goal of this section is to estimate the growth of the counting function of
the residual spectrum. To this end we recall the construction of the residual
spectrum. By Langlands
\cite[Ch. 7]{La1}, \cite[V.3.13]{MW}, $L^2_{\res}(G(F)\bs G(\A)^1)$ is spanned by
{\it iterated residues} of cuspidal Eisenstein series. Let us briefly recall
this construction.

Let $P=M\ltimes N$ be a $F$-rational parabolic subgroup of $G$. If $\alpha
\in\Sigma_P$, denote by $\alpha^\vee$ the co-root associated to $\alpha$.
Given $\alpha\in\Sigma_P$ and $c\in\R$, we set 
\[
H(\alpha,c):=\{\Lambda\in\af^\ast_\C\colon \Lambda(\alpha^\vee)=c\}.
\]
An affine subspace $\cH\subset \af^\ast_\C$ is called admissible, if $\cH$ is
the intersection of such hyperplanes. Suppose that $\cH_1\supset\cH_2$ are two
admissible affine subspaces of $\af^\ast_\C$ and $\cH_2$ is of co-dimension one
in $\cH_1$. Let $\Phi(\Lambda)$ be a meromorphic function on $\cH_1$ whose
singularities lie along hyperplanes which are admissible as subspaces of
$\af^\ast_\C$. Choose a real unit vector $\Lambda_0$ in $\cH_1$ which is normal
to $\cH_2$. Let $\delta>0$ be such that $\Phi(\Lambda+z\Lambda_0)$ has no
singularities in the punctured disc $0<|z|<2\delta$. 
Then we can define a meromorphic function $\Res_{\cH_2}\Phi$ on $\cH_2$
by
\[
\Res_{\cH_2}\Phi(\Lambda):=\frac{\delta}{2\pi i}\int_0^1
\Phi(\Lambda+\delta e^{2\pi i\vartheta}\Lambda_0)d(e^{2\pi i\vartheta}),
\]
The singularities of $\Res_{\cH_2}\Phi$ lie on the intersections with $\cH_2$ of
the singular hyperplanes of $\Phi$ different from $\cH_2$. Now consider a
complete flag
\[
\af^\ast_\C=\cH_p\supset\cH_{p-1}\supset\cdots\supset\cH_1\supset\cH_0=
\{\Lambda_0\}
\]
of affine admissible subspaces of $\af^\ast_\C$ and let $\Lambda_i\in\H_i$ be a
real unit vector which is normal to $\cH_{i-1}$, $i=1,\dots,p$. We call $\cF=
\{\cH_i,\Lambda_i\}$ an admissible flag. Let $\Phi$ be a meromorphic function on
$\af^\ast_\C$ whose singularities lie along admissible hyperplanes of
$\af^\ast_\C$. Then we define $\Phi_i$ inductively by
\[
\Phi_p=\Phi,\; \Phi_i=\Res_{\cH_i}\Phi_{i+1},\quad i=0,\dots,p-1.
\]
Set
\[
\Res_{\cF}\Phi:=\Phi_0.
\]
This is the iterated residue of $\Phi$ at $\Lambda_0$.

Now let $\cA^2_{\cu}(P)$ the subspace of functions $\phi\in\cA^2(P)$ such that
for almost all $x\in G(\A)$, the function $\phi_x(m):=\phi(mx)$ on
$M(F)\bs M(\A)^1$ lies in the space $L^2_{\cu}(M(F)\bs M(\A)^1)$. For
$\pi\in\Pi_{\cu}(M(\A)^1)$ let $\cA^2_{\cu,\pi}(P)$ be the subspace of functions
$\phi\in\cA^2_{\cu}(P)$ such that each of the functions $\phi_x$ lies in the
subspace $L^2_{\cu,\pi}(M(F)\bs M(A)^1)$ ({\color{red} isotypical subspace}). Let
 $\phi\in\cA^2_{\cu}(P)$. As shown by Langlands
\cite[\S 7]{La1}, the singularities of the Eisenstein series $E(\phi,\lambda)$
lie along hyperplanes of $\af^\ast_\C$ which are defined by equations of the form
$\Lambda(\alpha^\vee)=w$, $w\in\C$, $\alpha\in\Sigma_P$. Let $H(\alpha_i,c_i)$,
$i=0,\dots,p-1$, be a set of singular hyperplanes of $E(\phi,\lambda)$ with
$\cap_i H(\alpha_i,c_i)=\{\Lambda_0\}$. Set $\cH_i:=\cap_{j\ge i} H(\alpha_j,c_j)$,
$i=0,\dots,p-1$, and $\cH_P=\af^\ast_\C$. Choose real unit vectors $\Lambda_i\in
\cH_i$ normal to $\cH_{i-1}$. Then $\cF:=\{\cH_i,\Lambda_i\}$ is an admissible
flag. Furthermore, let $\varphi\in C_c^\infty(\af)$ and let
$\hat\varphi(\Lambda)$ be its Fourier transform. It is holomorphic on
$\af^\ast_\C$. Put
\[
\psi:=\Res_{\cF}\bigl[E(\phi,\Lambda)\hat\varphi(\Lambda)\bigr].
\]
Note that $\psi$ depends only on the derivatives of $\hat\varphi$ at
$\Lambda_0$. Let $\Co(\af^\ast)$ be the positive cone in $\af^\ast$ spanned by
the simple roots of $(P,A)$. If $\Lambda_0\in\Co(\af^\ast)$ then $\psi$ is
square integrable. Let $\Omega_M\in\cZ(\mf_\C)$ be the Casimir element of
$M(F_\infty)$. Assume that $\Omega_M\phi=\mu\phi$. Then it follows that
\begin{equation}\label{eigen-val}
\Omega\psi=(\|\Lambda_0\|^2-\|\rho_P\|^2+\mu)\psi.
\end{equation}
\cite[p. 29]{HC2}. Moreover $\|\Lambda_0\|\le\|\rho_P\|$. 
As shown by Langlands \cite[Theorem 7.2]{La1}, \cite[V.3.13]{MW},
$L^2_{\res}(G(F)\bs G(\A)^1)$ is spanned by all such $\psi$, where $P$ runs over
the standard parabolic subgroups of $G$, $\pi$ over $\Pi_{\cu}(M_P(\A)^1)$ and
$\phi$ over a basis of $\cA^2_{\cu,\pi}(P)$. For all details concerning the
description of the discrete and
residual spectrum see \cite[Theorem V.3.13, p. 221]{MW}, and
\cite[Corollary, p. 224]{MW}. Moreover, the question of positivity is dealt
with by \cite[Corollary VI.1.6 (d), p. 255]{MW}.

Let $K_f$ be an open compact subgroup of $G(\A_f)$.
Let $L^2_{\res}(\AG G(F)\bs G(\A))^{K_f}$ be the subspace of $K_f$-invariant
functions of $L^2_{\res}(\AG G(F)\bs G(\A))$. Moreover, for
$\nu\in\Pi(\K_\infty)$ let
$L^2_{\res}(\AG G(F)\bs G(\A))^{K_f,\nu}$ be the $\nu$-isotypical
subspace of $L^2_{\res}(\AG G(F)\bs G(\A))^{K_f}$. Then
$L^2_{\res}(\AG G(F)\bs G(\A))^{K_f,\nu}$ is spanned by residues as above,
where for a given pair $(P,\pi)$, $\phi$ runs over a basis of
$\cA^2_{\cu,\pi}(P)^{K_f,\sigma}$. Recall that $\cA^2_{\cu,\pi}(P)^{K_f,\sigma}$ is finite
dimensional. So the estimation of the counting function of the residual
spectrum is reduced to the following problems:
\begin{enumerate}
\item[(1)] Estimation of $\dim\cA^2_{\cu,\pi}(P)^{K_f,\sigma}$ in terms of $\pi$,
$K_f$, and $\sigma$.
\item[(2)] For a given cuspidal Eisenstein
series $E(\phi,\Lambda)$, $\phi\in\cA^2_{\cu,\pi}(P)$, we need to estimate the
number of its singular hyperplanes, counted to multiplicity, which are real and
intersect a given compact set containing the origin.
\end{enumerate}
We start with (1). Let $\pi\in\Pi_{\cu}(M(\A))$, $\pi=\pi_\infty\otimes\pi_f$.
Let $\cH_P(\pi_\infty)$ (resp. $\cH_P(\pi_f)$) be the Hilbert space of the induced
representation $\Ind_{P(F_\infty)}^{G(F_\infty)}(\pi_\infty)$ (resp. $\Ind_{P(\A_f)}^{G(\A_f)}$). 
Denote by $\cH_P(\pi_\infty)_\sigma$ the $\sigma$-isotypical subspace of
$\cH_P(\pi_\infty)$ and by $\cH_P(\pi_f)^{K_f}$ the subspace of $K_f$-invariant
vectors of $\cH_P(\pi_f)$.  Let $m(\pi)$ denote the multiplicity with which
$\pi\in\Pi_{\cu}(M(\A))$ occurs in $L^2_{\cu}(\AG M(F)\bs M(\A))$. Then
by \eqref{ind-rep-1}
we obtain
\begin{equation}\label{dim-A2}
\dim\cA^2_\pi(P)^{K_f,\sigma}=m(\pi)\dim(\cH_P(\pi_f)^{K_f})
\dim(\cH_P(\pi_\infty)_\sigma).
\end{equation}
Using Frobenius reciprocity \cite[p. 208]{Kn} we get
\[
[\Ind_{P(F_\infty)}^{G(F_\infty)}(\pi_\infty)|_{\K_\infty}\colon\sigma]=
\sum_{\tau\in\Pi(\K_{M,\infty})}
[\pi_\infty|_{\K_{M,\infty}}\colon\tau]\cdot[\sigma|_{\K_{M,\infty}}\colon\tau].
\]
For $\tau\in\Pi(\K_{M,\infty})$ let $\cH_{\pi_\infty}(\tau)$ denote the
$\tau$-isotypical subspace of $\cH_{\pi_\infty}$. Then we obtain
\begin{equation}\label{dim-isotyp}
\dim(\cH_P(\pi_\infty)_\sigma)\le \dim(\sigma)\sum_{\tau\in\Pi(\K_{M,\infty})}
\dim(\cH_{\pi_\infty}(\tau))\cdot[\sigma|_{\K_{M,\infty}}\colon\tau].
\end{equation}
Next we consider $\pi_f=\otimes_{v<\infty}\pi_v$. Replacing $K_f$ by a subgroup
of finite index, if necessary, we can assume that $K_f=\prod_{v<\infty}K_v$. For
any $v<\infty$, denote by $\cH_P(\pi_v)$ the Hilbert space of the induced
representation $\Ind_{P(F_v)}^{G(F_v)}(\pi_v)$. Let $\cH_P(\pi_v)^{K_v}$ be the
subspace of $K_v$-invariant vectors. Then $\dim\cH_P(\pi_v)^{K_v}=1$ for almost
all $v$ and
\[
\cH_P(\pi_f)^{K_f}\cong \bigotimes_{v<\infty}\cH_P(\pi_v)^{K_v}.
\]
\begin{equation}\label{K-finite-inv}
\begin{split}
\Ind^{G(F_v)}_{P(F_v)}(\pi_v)^{K_v}&=\left(\Ind_{P(\cO_v)}^{G(\cO_v)}(\pi_v)\right)^{K_v}
\\
&\hookrightarrow \bigoplus_{G(\cO_v)/K_v}\Ind^{K_v}_{K_v\cap P}(\pi_v)^{K_v}\\
&\cong\bigoplus_{G(\cO_v)/K_v}\pi_v^{K_v\cap P}.
\end{split}
\end{equation}
Let $K_{M,f}=K_f\cap M(\A_f)$. For $\sigma\in\Pi(K_\infty)$ let
\[
\cF_M(\sigma):=\{\tau\in\Pi(K_{M,\infty})\colon [\sigma|_{K_{M,\infty}}\colon\tau]>0\}.
\]
Combining \eqref{dim-A2} - \eqref{K-finite-inv}, it follows that there exists
$C>0$, which depends on $\sigma$, such that
\begin{equation}\label{est-dim-A2}
\dim\cA^2_\pi(P)^{K_f,\sigma}\le C m(\pi)\dim(\cH_{\pi_f}^{K_{M,f}})
\sum_{\tau\in\cF(\sigma)}\dim(\cH_{\pi_\infty}(\tau)).
\end{equation}
In order to deal with (2), we use the inner product formula for truncated
cuspidal Eisenstein series proved by Langlands \cite[Sect. 9]{La3},
\cite[Lemma 4.2]{Ar2}. We recall the formula. Let $T\in\af_0^+$ be sufficiently regular. For $\phi\in\cA^2_{\cu}(P)$ let $\Lambda^TE(g,\phi,\lambda)$ be the
truncated Eisenstein series \cite[Sect. 1]{Ar2}. Let $\phi\in\cA_{\cu}^2(P)$
and $\phi^\prime\in\cA^2_{\cu}(P^\prime)$. Then we have the following inner
product formula 
\begin{equation}\label{inner-prod}
\begin{split}
&\int_{G(F)\bs G(\A)^1}\Lambda^TE(g,\phi,\lambda)\overline{\Lambda^T E(g,\phi^\prime,
\lambda^\prime)}dg\\
&=\sum_Q\sum_s\sum_{s^\prime}\vol(\af^G_Q/\Z(\Delta^\vee_Q))
\frac{e^{(s\lambda+s^\prime\lambda^\prime)(T)}}
{\prod_{\alpha\in\Delta_Q}(s\lambda+s^\prime\bar{\lambda^\prime})(\alpha^\vee)}
(M_{Q|P}(s,\lambda)\phi,M_{Q|P^\prime}(s^\prime,\lambda^\prime)\phi^\prime),
\end{split}
\end{equation}
where $Q$ runs over all standard parabolic subgroups, $s\in W(\af_P,\af_Q)$,
and $s^\prime\in W(\af_{P^\prime},\af_Q)$, as meromorphic functions of
$\lambda\in\af^\ast_{P,\C}$ and $\lambda^\prime\in\af^\ast_{P^\prime,\C}$
\cite[Prop. 15.3]{Ar5}, \cite[Lemma 4.2]{Ar2}. It follows from the inner product
formula that in order to
settle (2), it suffices to  estimate the corresponding number of singular
hyperplanes of the intertwining operators $M_{Q|P}(s,\lambda)|_{\cA^2_{\cu,\pi}(P)}$
for $\pi\in\Pi_{\cu}(M(\A))$. 
To deal with this problem, we reduce it to the case of
$M_{Q|P}(\lambda)|_{\cA^2_{\cu,\pi}(P)}$, $Q,P\in\cP(M)$,
$\pi\in\Pi_{\cu}(M(\A))$. Let $M,M_1\in\cL$ and let $P\in\cP(M)$,
$P_1\in\cP(M_1)$. Suppose that $P$ and $P_1$ are associated and let
$t\in W(\af_M,\af_{M_1})$. Let $w_t\in G(F)$ be a representative of $t$. Then
$M_1:=w_t M w_t^{-1}$ and $tP=w_t P w_t^{-1}$ is a parabolic subgroup which
belongs to $\cP(M_1)$. The restriction of $t$ to $\af_M\subset\af_0$ defines
an element in $W(\af_M,\af_{tM})$. Let
\[
t\colon\cA^2(P)\to\cA^2(tP)
\]
be the linear operator defined by $(t\phi)(x)=\phi(w_t^{-1}x)$, $x\in G(\A)$.
By \cite[Lemma 1.1]{Ar3} there exists $T_0\in\af_0$ such that
\[
H_{P_0}(w_t^{-1}) =T_0-t^{-1}T_0.
\]
Then by \cite[(1.5)]{Ar9} one has
\begin{equation}\label{aux-res}
M_{P_1|P}(s,\lambda)t^{-1}=M_{P_1|tP}(st^{-1},t\lambda)e^{(\lambda+\rho_P)(T_0-t^{-1}
T_0)}
\end{equation}
for $s\in W(\af_M,\af_{M_1})$. Thus as far as the singular hyperplanes of
$M_{P_1|P}(s,\lambda)$ are concerned, we can assume that $P_1$ and $P$ have the
same Levi component $M$, that is $P,P_1\in\cP(M)$. Let $t\in W(\af_M)$. By the
functional equation \cite[(1.2)]{Ar9} we have
\begin{equation}\label{funct-equ1}
M_{P_1|P}(t,\lambda)=M_{P_1|tP}(1,t\lambda)M_{tP|P}(t,\lambda).
\end{equation}
Using \eqref{aux-res} with $s=t$, we get
\[
M_{tP|P}(t,\lambda)=M_{tP|tP}(1,t\lambda)te^{(\lambda+\rho_P)(T_0-t^{-1}T_0)}.
\]
Since $M_{tP|tP}(1,\lambda)=\Id$, it follows that
\begin{equation}
M_{P_1|P}(t,\lambda)=M_{P_1|tP}(1,t\lambda)te^{(\lambda+\rho_P)(T_0-t^{-1}T_0)}.
\end{equation}
Thus we are reduced to
the consideration of the singular hyperplanes of the intertwining operators
$M_{Q|P}(\lambda)=M_{Q|P}(1,\lambda)$. Given $\pi\in\Pi_{\cu}(M_P(\A))$, an open
compact subgroup $K_f\subset G(\A_f)$ and $\sigma\in\Pi(\K_\infty)$,
we need to estimate the singular hyperplanes, counted to multiplicity,
of $M_{Q|P}(\pi,\lambda)^{K_f,\sigma}$, which are real and intersect a
fixed compact set. By \eqref{normalization1} the problem is reduces to the
consideration of the normalizing factors $n_{Q|P}(\pi,\lambda)$ and the
normalized intertwining operators $R_{Q|P}(\pi,\lambda)$ restricted to
$\cA^2_{\cu,\pi}(P)^{K_f,\sigma}$.

To begin with we consider the normalized intertwining operators
\eqref{norm-intertw-op2}. Let $v$ be a place of $F$. For $\pi_v\in\Pi(G(F_v))$
let
\[
J_{Q|P}(\pi_v,\lambda)\colon\cH_P^0(\pi_v)\to\cH_Q^0(\pi_v),\quad\lambda\in
\af_{Q,\C}^\ast,
\]
be the local intertwining operator and 
\begin{equation}\label{lo-int-op}
R_{Q|P}(\pi_v,\lambda):=n_{Q|P}(\pi_v,\lambda)^{-1} J_{Q|P}(\pi_v,\lambda).
\end{equation}
the local normalized intertwining operator. The operators
$R_{Q|P}(\pi_v,\lambda)$  satisfy properties
$(R_1),...,(R_8)$ of \cite[Theorem 2.1]{Ar4}. Let
$\pi=\otimes_v\pi_v\in\Pi_{\di}(G(\A))$. There exists a finite set of
places $S(\pi)$ of $F$, containing the Archimedean places, such that for all
$v\not\in S$, $G/F_v$ and $\pi_v$ are unramified. For $v\not\in S(\pi)$, let
$\K_v$ be hyperspecial and assume that $\phi_v\in\cH_P(\pi_v)^{\K_v}$. Then by
$(R_8)$ we have 
\[
R_{Q|P}(\pi_v,\lambda)\phi_v=\phi_v,\quad s\not\in S(\pi).
\]
Hence the product \eqref{norm-intertw-op2} runs only over $v\in S(\pi)$ and
therefore, it is well defined for all $\lambda\in\af^\ast_{M,\C}$. So it suffices
to consider the local intertwining operators $R_{Q|P}(\pi_v,\lambda)$.
Let $P_0,...,P_k\in\cP(M)$ and $\alpha_1,...,\alpha_k\in\Sigma_M$  such that
$P=P_0$, $Q=P_k$ and $P_{i-1}\big|^{\alpha_i}P_i$ for $i=1,...,k$. By
\cite[($R_2$)]{Ar4} we get
\begin{equation}
R_{Q|P}(\pi_v,\lambda)=R_{P_k|P_{k-1}}(\pi_v,\lambda)\circ
R_{R_{k-1}|P_{k-2}}(\pi_v,\lambda)\circ\cdots\circ R_{P_1|P_0}(\pi_v,\lambda).
\end{equation}
Hence we can assume that $Q,P\in\cP(M)$ are adjacent along some root
$\alpha\in\Sigma_M$. Then $R_{Q|P}(\pi_v,\lambda)$ depends only on
$\lambda(\alpha^\vee)$. First consider the case $v<\infty$. Let $q_v$ be the
order of the residue field of $F_v$. By $(R_6)$, $R_{Q|P}(\pi_v,\lambda)$ is a
rational function of $q_v^{-\lambda(\alpha^\vee)}$. Furthermore,
by  \cite[Theorem 2.2.2]{Sh1} there exists a polynomial $Q_v(\pi_v,s)$ with $
Q_v(\pi_v,0)=1$,
such that
\[
Q_v(\pi_v,q_v^{-\lambda(\alpha^\vee)})J_{Q|P}(\pi_v,\lambda)
\]
is a holomorphic and non-zero function of $\lambda\in\af_{M,\C}^\ast$. Moreover,
the degree of the polynomial $Q_v$ is independent of  $\pi_v\in\Pi(M(F_v))$
By \eqref{lo-int-op} it follows that
\[
n_{Q|P}(\pi_v,\lambda)Q_v(\pi_v,q_v^{-\lambda(\alpha^\vee)})R_{Q|P}(\pi_v,\lambda)
\]
is holomorphic on $\af_{M,\C}^\ast$. The normalizing factors
$n_{Q|P}(\pi_v,\lambda)$ satisfy properties similar to the corresponding
properties $(R_1),...,(R_8)$ satisfied by the local intertwining
operators \cite[Theorem 2.1]{Ar4}. In particular, there exists a rational
function
$n_\alpha(\pi_v,s)$ such that
\[
n_{Q|P}(\pi_v,\lambda)=n_\alpha(\pi_v,q_v^{-\lambda(\alpha^\vee)}).
\]
Now observe that for every $R>0$ and $z\in\C$ the number of solutions of
$q_v^{-s}=z$ in the disc $|s|\le R$ is bounded by $1+(2\pi)^{-1}\log(q_v)R$.
Hence it suffices to estimate the number of zeros of $n_\alpha(\pi_v,s)$
and $Q_v(\pi_v,s)$, respectively. As mentioned above, the degree of
$Q_v(\pi,s)$ is bounded independently of $\pi\in\Pi(M(F_v))$.
The rational function 
$n_\alpha(\pi_v,s)$ has been described in \cite[(3.6)]{Mu2}. It follows from
\cite[Lemma 3.1]{Mu2} and \cite[(3.6)]{Mu2} that there exists $C>0$ 
such that for all $M\in\cL(M_0)$ and all square integrable $\pi\in\Pi(M(F_v))$
the number of poles and zeros of $n_\alpha(\pi,s)$ is less than $C$. Now let
$\pi$ be tempered. It is known that $\pi$ is an irreducible constituent of an
induced representation $\Ind_R^M(\sigma)$, where $M_R$ is an admissible Levi
subgroup of $M$ and $\sigma\in\Pi(M_R(F_v))$ is square integrable modulo $A_R$.
Then by \cite[(2.2)]{Ar4} we are reduced to the square integrable case. In
general, $\pi$ is a Langlands quotient of an induced representation
$\Ind_R^M(\sigma,\mu)$, where $M_R$ is an admissible Levi subgroup of $M$,
$\sigma\in\Pi_{\temp}(M_R(F_v))$, and $\mu$ is point in the chamber of
$\af_R^\ast/\af_M^\ast$. Now we use \cite[(2.3)]{Ar4} to reduce to the tempered
case, which proves that there exists $C>0$ such that the number of poles and
zeros of $n_\alpha(\pi,s)$ is less than $C$ for all $\pi\in\Pi(M(F_v))$.

The case $v\in S_\infty$ has been already treated in section \ref{sec-locint}.
See \eqref{est-const} and the text above \eqref{est-const}.

Now we can summarize our results.
Using \eqref{norm-factors}, we obtain the following proposition.
\begin{prop}\label{prop-norm-intertw}
Let $M\in\cL(M_0)$. Let $K_f\subset G(\A_f)$ be an open compact subgroup and
$\sigma\in\Pi(\K_\infty)$. For every $R>0$ there exists $C>0$
such that for all $Q,P\in\cP(M)$ and $\pi\in\Pi(M(\A))$ the number of
singular hyperplanes of $R_{Q|P}(\pi,\lambda)\big|_{\cA^2_\pi(P)^{K_f,\sigma}}$, which
intersects the ball of radius $R$ in $\af_{M,\C}^\ast$, is bounded by $C$.
\end{prop}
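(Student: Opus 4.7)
The proof consists of assembling the reductions and uniform local bounds already established in the preceding paragraphs. By the product formula \eqref{prod-form}, which holds for the normalized operators as well by property $(R_2)$ of \cite[Theorem 2.1]{Ar4}, the operator $R_{Q|P}(\pi,\lambda)$ factors as a composition of $R_{P_i|P_{i-1}}(\pi,\lambda)$ for parabolics adjacent along a single root $\alpha_i\in\Sigma_M$. Thus I may assume from the outset that $Q,P\in\cP(M)$ are adjacent along a single root $\alpha$, so that $R_{Q|P}(\pi,\lambda)$ depends only on the scalar $s=\lambda(\alpha^\vee)$, and a singular hyperplane meeting the ball of radius $R$ in $\af_{M,\C}^\ast$ corresponds to a pole of this one-variable meromorphic function inside a disc of size bounded in terms of $R$.

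Next I localize, via \eqref{norm-intertw-op2}, the set of places that can contribute a singular hyperplane. Replacing $K_f$ by a finite-index subgroup I may assume $K_f=\prod_v K_v$ is factorizable and let $S$ be the finite set of finite places where $K_v$ is not hyperspecial. For any $v\notin S\cup S_\infty$ for which $\pi_v$ is unramified, property $(R_8)$ forces $R_{Q|P}(\pi_v,\lambda)$ to act as the identity on $\H_P(\pi_v)^{K_v}$, and therefore contributes no singular hyperplane to the restriction of $R_{Q|P}(\pi,\lambda)$ to $\cA^2_\pi(P)^{K_f,\sigma}$. Only the finitely many places in $S\cup S_\infty$ (together with the bad places for $G$, which is a fixed finite set) can contribute, and the total number of singular hyperplanes is at most the sum of their local contributions.

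At each finite place $v$ in this fixed finite set, I combine the two uniform bounds already proved: the degree of the Shahidi polynomial $Q_v(\pi_v,z)$ clearing the poles of $J_{Q|P}(\pi_v,\lambda)$ is bounded independently of $\pi_v$ by \cite[Theorem~2.2.2]{Sh1}, and the number of zeros and poles of the rational function $n_\alpha(\pi_v,z)$ in $z=q_v^{-s}$ is bounded independently of $\pi_v$ by the results of \cite[Lemma~3.1]{Mu2} and \cite[(3.6)]{Mu2}, combined with the reductions to the tempered and then square-integrable case via \cite[(2.2),(2.3)]{Ar4}. Converting from zeros in $z$ to zeros in $s$ introduces only the factor $1+(2\pi)^{-1}(\log q_v)R$, which depends on $R$ but not on $\pi$. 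At the archimedean place, the analogous uniform bound is \eqref{est-const} applied to the $\sigma$-isotypic subspace: the constants $r$ and $m$ bounding the poles of $R_{\overline{P}|P}(\pi_\infty,s)|_{\H_P(\pi_\infty)^\sigma}$ are controlled by $\|\sigma\|$, which is fixed. Summing these finitely many uniform local bounds gives the constant $C$ claimed in the proposition.

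The main point throughout is the uniformity in $\pi$; this is where the work done in the preceding pages, especially the Langlands-quotient reduction at the finite places and the Arthur-type estimates on poles at the archimedean place, is essential. No further argument beyond these reductions is required.
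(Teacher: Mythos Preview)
Your proposal is correct and follows essentially the same approach as the paper: the proposition is explicitly introduced with ``Now we can summarize our results,'' and its proof consists precisely of the reductions (adjacent parabolics via $(R_2)$, localization via \eqref{norm-intertw-op2} and $(R_8)$, uniform finite-place bounds via Shahidi's polynomial and the Langlands-quotient reduction for $n_\alpha(\pi_v,\cdot)$, and the archimedean bound \eqref{est-const}) that you have recapitulated. One small point worth making explicit: for $v\notin S\cup S_\infty$ with $\pi_v$ ramified the space $\cH_P(\pi_v)^{K_v}$ is zero (since $K_v$ is hyperspecial), so $\cA^2_\pi(P)^{K_f,\sigma}=0$ and there is nothing to prove --- this is why only the fixed finite set $S\cup S_\infty$ (together with the places where $G$ is ramified) can contribute.
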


Next we consider the global normalizing factors. By \eqref{norm-factors},
$n_{Q|P}(\pi,\lambda)$ is the
product of the normalizing factors $n_\alpha(\pi,\lambda(\alpha^\vee))$,
$\alpha\in\Sigma_P\cap\Sigma_{\bar{Q}}$. Thus our problem is reduced to the
estimation of the number of real poles, counted to multiplicity, of the
meromorphic function $n_\alpha(\pi,s)$. Let $\Sigma_\alpha^\R(\pi)$ be
the set of real poles of $n_\alpha(\pi,s)$. Let
$\pi\in\Pi_{\di}(M(\A);K_f,\sigma)$. Then it follows from Corollary
\ref{cor-est-poles} that there exists $C>0$ 
\begin{equation}\label{num-poles}
\sum_{\rho\in\Sigma_\alpha^\R(\pi)}n(\rho)\le C(1+\log(1+\lambda^2_{\pi_\infty})).
\end{equation}
Now we can summarize our results as follows. For arbitrary $Q,P\in\cP(M)$,
the global normalizing factor $n_{Q|P}(\pi,\lambda)$ is the product of 
the functions $n_\alpha(\pi,\lambda(\alpha^\vee))$,
$\alpha\in\Sigma_P\cap\Sigma_{\bar Q}$  \eqref{norm-factors}. Note that
$\#\Sigma_P\le\dim \nf_P$. Let $d_P:=\dim\nf_P$. Then it follows from 
\eqref{num-poles} and Proposition \eqref{prop-norm-intertw} that there exists
$C>0$ such for every $\pi\in\Pi_{\cu}(M(\A);K_f,\sigma)$ and every
$\phi\in\cA_{\cu,\pi}^2(P)$ the number of singular hyperplanes of the
Eisenstein series $E(\phi,\lambda)$, counted with multiplicity, which are
real and intersect the ball $\{\lambda\in\af_{M,\C}^\ast\colon\|\lambda\|\le \|\rho_P\|\}$  is bounded by
\begin{equation}
C(1+\log(1+\lambda^2_{\pi_\infty}))^{d_P},
\end{equation}
Now recall from the beginning of this section that the residual spectrum is
spanned
by iterated residues of Eisenstein series $E(\phi,\lambda)$ with respect to
complete flags of affine admissible subspaces of $\af_{M,\C}^\ast$. For
$\lambda\ge 0$ let
\[
\Pi_{\cu}(M(\A);\lambda):=\{\pi\in\Pi_{\cu}(M(\A))\colon -\lambda_{\pi_\infty}\le
\lambda\}.
\]
We note that there exists $C\in\R$ such that $C\le -\lambda_{\pi_\infty}$ for all
$\pi\in\Pi_{\di}(M(\A))$. 
Using \eqref{eigen-val} and  \eqref{est-dim-A2}, it follows that there exist
$C_0,C_1>0$ such that
\begin{equation}\label{estim5}
\begin{split}
N_{\res}^{K_f,\sigma}(\lambda)\le C_1
\sum_{P\supset P_0}&(1+\log(1+\lambda^2))^{d_{P}}\\
&\cdot\sum_{\tau\in\cF_{M_P}(\sigma)}\sum_{\pi\in\Pi_{\cu}(M_P(\A);\lambda+C_0)}
m(\pi)\dim(\cH_{\pi_f}^{K_{M_P,f}})\dim(\cH_{\pi_\infty}(\tau)).
\end{split}
\end{equation}
For a given $P$ let $M=M_P$. As in \eqref{adel-quot1} there exist finitely
many lattices
$\Gamma_{M,i}\subset M(F_\infty)$, $i=1,...,k$, such that
\begin{equation}
\Ai M(F)\bs M(\A)/K_{M,f}\cong
\bigsqcup_{i=1}^k(\Gamma_{M,i}\bs M(F_\infty)^1). 
\end{equation}
Thus we get an isomorphism of $M(F_\infty)$-modules
\begin{equation}
L^2_{\cu}(\Ai M(F)\bs M(\A))^{K_{M,f}}\cong \bigoplus_{i=1}^k
L^2_{\cu}(\Gamma_{M,i}\bs M(F_\infty)^1).
\end{equation}
And hence for $\tau\in\Pi(K_{M,\infty})$ we get
\begin{equation}\label{compar3}
L^2_{\cu}(\Ai M(F)\bs M(\A))^{K_{M,f},\tau}\cong \bigoplus_{i=1}^k
L^2_{\cu}(\Gamma_{M,i}\bs (M(F_\infty)^1\otimes V_\tau)^{K_{M,\infty}}.
\end{equation}
Let $\widetilde X_M=\Ai\bs M(F_\infty)/K_{M,\infty}$. Let
$E_{\tau,i}\to\Gamma_{M,i}\bs \widetilde X_M$ be the locally homogeneous vector
bundle associated to $\tau$. Let $N_{\cu}^{\Gamma_{M,i}}(\lambda;\tau)$
be the eigenvalues counting function for the Casimir operator acting
in $L^2_{\cu}(\Gamma_{M,i}\bs \widetilde X_M, E_{\tau,i})$. 
It follows from \eqref{compar3} that
\begin{equation}\label{count-fct4}
\sum_{\pi\in\Pi_{\cu}(M(\A);\lambda)} m(\pi)\dim(\cH^{K_{M,f}}_{\pi_f})
\dim(\cH_{\pi_\infty}(\tau))=\sum_{i=1}^k N_{\cu}^{\Gamma_{M,i}}(\lambda;\tau).
\end{equation}
Let $m_M:=\dim\widetilde X_M$. Then by \cite[Theorem 9.1]{Do} we get
\begin{equation}
N_{\cu}^{\Gamma_{M,i}}(\lambda;\tau)\ll 1+\lambda^{m_M/2}. 
\end{equation}
Thus by \eqref{count-fct4} it follows that there exists $C>0$ such that
\begin{equation}\label{estim-res4a}
\sum_{\tau\in\cF_{M}(\sigma)}\sum_{\pi\in\Pi_{\cu}(M(\A);\lambda+C_0)} m(\pi)
\dim(\cH^{K_{M,f}}_{\pi_f})\dim(\cH_{\pi_\infty}(\tau))\le C(1+\lambda^{m_M/2}).
\end{equation}
Let $n:=\dim \widetilde X$ and $m_0(G):=\max\{m_{M_P}\colon P\neq G\}$. Note
that $d_P\le r$ for all $P$.
Then by \eqref{estim5} and \eqref{estim-res4a} we obtain
\begin{equation}\label{estim-res5}
N_{\res}^{K_f,\sigma}(\lambda)
\le C(1+\log(1+\lambda^2))^n(1+\lambda^{m_0(G)/2}).
\end{equation}
Now observe that $\widetilde X\cong\widetilde X_M\times\Ai\times N_P(F_\infty)$
\cite[Sect. 4.2, (3)]{Bo3}.
Hence $m_M\le n-2$, and therefore $m_0(G)\le n-2$. Thus we get
\begin{equation}\label{estim-res6}
N_{\res}^{K_f,\sigma}(\lambda)\le C(1+\lambda^{(n-1)/2}),\quad \lambda\ge 0,
\end{equation}
where $C>0$ depends on $K_f$ and $\sigma$. This completes the proof of the
second statement of Theorem \ref{main-thm-adelic}. 

Next we wish to extend this result to any Levi subgroup $L\in\cL(M)$.
Recall that for any pair of elements $Q\in\cP(L)$  and $R\in\cP^L(M)$
there exists a unique $P\in\cP(M)$ such that $P\subset Q$ and
$P\cap L=R$. Then $P$ is denoted by $Q(R)$. Let $R,R^\prime\in\cP^L(M)$,
$\pi\in\Pi_{\di}(M(\A))$, and $Q\in\cP(L)$. Then for any $k\in \K$ and
$\phi\in\cA^2_\pi(Q(R))$,
the function $\phi_k$ on $M(\A)$, which is defined by
$\phi_k(m):=\phi(mk)$, $m\in M(\A)$, belongs to $\cA^2_\pi(R)$, and one
has
\begin{equation}\label{intertw-rel1}
(M_{Q(R^\prime)|Q(R)}(\pi,\lambda)\phi)_k=M_{R^\prime|R}(\pi,\lambda)\phi_k)
\end{equation}
\cite[(1.3)]{Ar9}. Furthermore, the normalizing factors satisfy
\begin{equation}\label{rel-norm-fact}
n_{Q(R^\prime)|Q(R)}(\pi,\lambda)=n_{R^\prime|R}(\pi,\lambda)
\end{equation}
\cite[Sect. 2]{Ar4}. 
Thus the considerations above continue to hold for each standard Levi
subgroup $L$ of $G$. For $M\in\cL(M_0)$ and $\sigma\in\Pi(\K_\infty)$ let
$\sigma_M:=\sigma|_{\K_\infty\cap M(F_\infty)}$. Denote by
$N_{M,\res}^{K_{M,f},\sigma_M}(\lambda)$ the counting function of the
residual spectrum for $M$ with respect to $(K_{M,f},\sigma_M)$. Let
$m_0(M):=\max\{m_{M_R}\colon R\in\cP^M,\; R\neq M\}$. 
Then, summarizing our results, we get
\begin{equation}
N_{M,\res}^{K_{M,f},\sigma_M}(\lambda)\le C(1+\log(1+\lambda^2))^{r_M}
\lambda^{m_0(M)/2}
\end{equation}
for $\lambda\ge 0$. As above it follows that $m_0(M)\le \dim\widetilde X_M-2$,
and we obtain the following proposition.
\begin{prop}\label{prop-res}
Assume that $G$ satisfies condition (L). Let $K_f$ be an open compact
subgroup of $G(\A_f)$ and $\sigma\in\Pi(\K_\infty)$. Let $M\in\cL$ and
$m_M:=\dim \widetilde X_M$. Then there exists $C>0$ such that
\begin{equation}
N_{M,\res}^{K_{M,f},\sigma_M}(\lambda)\le C(1+\lambda^{(m_M-1)/2})
\end{equation}
for $\lambda\ge 0$.
\end{prop}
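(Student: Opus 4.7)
The plan is to run the argument that yielded the bound $N_{\res}^{K_f,\sigma}(\lambda)\le C(1+\lambda^{(n-1)/2})$ with $G$ replaced by $M$. The spectral construction is intrinsic to the reductive group $M$: by Langlands' theory, $L^2_{\res}(\Ai M(F)\bs M(\A))^{K_{M,f},\sigma_M}$ is spanned by iterated residues of cuspidal Eisenstein series attached to pairs $(R,\pi)$ with $R\in\cP^M(M')$, $M'\in\cL^M$, and $\pi\in\Pi_{\cu}(M'(\A))$. The Maass--Selberg inner product formula \eqref{inner-prod}, applied inside $M$, reduces the count of singular hyperplanes of these Eisenstein series to the count of singular hyperplanes of the intertwining operators $M_{R'|R}(\pi,\lambda)$ restricted to $\cA^2_{\cu,\pi}(R)^{K_{M,f},\sigma_M}$, and by \eqref{normalization1} this further splits into estimates for the global normalizing factors $n_{R'|R}(\pi,\lambda)$ and for the normalized intertwining operators $R_{R'|R}(\pi,\lambda)$.

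The key step is to transport the estimates already established for $G$ to the Levi $M$ via \eqref{intertw-rel1} and \eqref{rel-norm-fact}. Fix any $Q\in\cP(M)$; then for $R,R'\in\cP^M(M')$ we have $n_{R'|R}(\pi,\lambda)=n_{Q(R')|Q(R)}(\pi,\lambda)$, so Corollary \ref{cor-est-poles} applied in $G$ gives exactly the same $\log(1+\lambda_{\pi_\infty}^2)$ bound on the number of real poles. Likewise \eqref{intertw-rel1} reduces the pole count of $R_{R'|R}(\pi,\lambda)$ to that of $R_{Q(R')|Q(R)}(\pi,\lambda)$ on the $K_f$-invariant and $\sigma$-isotypic subspace, which is controlled by Proposition \ref{prop-norm-intertw}. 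Combining these with the dimension estimate \eqref{est-dim-A2} applied inside $M$ and with Donnelly's upper bound for the cuspidal spectrum on each component of the decomposition analogous to \eqref{adel-quot1} for $M'$, one obtains
\[
N_{M,\res}^{K_{M,f},\sigma_M}(\lambda)\le C_1\bigl(1+\log(1+\lambda^2)\bigr)^{r_M}\bigl(1+\lambda^{m_0(M)/2}\bigr),
\]
where $r_M$ is the semisimple rank of $M$ and $m_0(M)=\max\{m_{M_R}\colon R\in\cP^M,\ R\neq M\}$.

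The final step is geometric. For any proper $R\in\cP^M$ with Levi $M_R$, one has the diffeomorphism $\widetilde X_M\cong \widetilde X_{M_R}\times A_{R,\infty}^+/\AAA_{M,\infty}^+\times N_R(F_\infty)$, both of the last two factors having dimension $\ge 1$. Hence $m_{M_R}\le m_M-2$, so $m_0(M)\le m_M-2$, and the logarithmic factor is absorbed into $\lambda^{(m_M-1)/2}$ by adjusting the constant (the estimate being trivial for bounded $\lambda$). I expect no serious obstacle: the entire argument is a template already executed for $G$, and the only thing that needs checking is that \eqref{intertw-rel1}--\eqref{rel-norm-fact}, together with the fact that $M_{R'|R}$ depends on $\lambda$ only through its restriction to $\af_{M'}^\ast$, let the previously proved pole counts be read off without loss inside $M$.
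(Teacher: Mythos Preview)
Your proposal is correct and follows essentially the same approach as the paper: rerun the residual-spectrum argument inside $M$, using \eqref{intertw-rel1} and \eqref{rel-norm-fact} to transport the pole counts of Corollary~\ref{cor-est-poles} and Proposition~\ref{prop-norm-intertw} from $G$ down to the Levi, then absorb the logarithmic factor via $m_0(M)\le m_M-2$. The paper's own treatment is just the brief paragraph preceding the proposition, and your write-up spells out the same steps in slightly more detail.
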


\section{The spectral side of the trace formula}
\setcounter{equation}{0}
In this section we apply the spectral side of the (non-invariant) trace
formula of Arthur \cite{Ar1}, \cite{Ar2}, to the heat kernel. The goal
is to prove that the leading term of the asymptotic expansion as $t\to 0$ is
given by the trace of the heat operator, restricted to the point spectrum. 

To begin with we briefly recall the structure of the spectral side.
Let $L \supset M$ be Levi subgroups in 
$\levis$, $P \in \PPP (M)$, and
let $m=\dim\aaa_L^G$ be the co-rank of $L$ in $G$.
Denote by $\bases_{P,L}$ the set of $m$-tuples $\bss=(\beta_1^\vee,\dots,
\beta_m^\vee)$
of elements of $\rts_P^\vee$ whose projections to $\af_L$ form a basis for 
$\af_L^G$.
For any $\bss=(\beta_1^\vee,\dots,\beta_m^\vee)\in\bases_{P,L}$ let
$\vol(\bss)$ be the co-volume in $\af_L^G$ of the lattice spanned by $\bss$ 
and let
\begin{align*}
\Xi_L(\bss)&=\{(Q_1,\dots,Q_m)\in\cF_1(M)^m: \ \ \beta_i^{\vee}\in\af_M^{Q_i}, 
\, i = 1, \dots, m\}\\&=
\{\langle P_1,P_1'\rangle,\dots,\langle P_m,P_m'\rangle): \ \ 
P_i|^{\beta_i}P_i', \, 
i = 1, \dots, m\}.
\end{align*}
Given $Q,P\in\cP(M)$, let $M_{Q|P}(\lambda)\colon \cA^2(P)\to\cA^2(Q)$,
$\lambda\in\af^\ast_{M,\C}$, be the intertwining operator defined by
\eqref{intertw1}.

For any smooth function $f$ on $\af_M^*$ and
$\mu\in\af_M^*$ denote by 
$D_\mu f$ the directional derivative of $f$ along $\mu\in\af_M^*$.
For a pair $P_1|^\alpha P_2$ of adjacent parabolic subgroups in $\cP(M)$ write
\begin{equation}\label{intertw2}
\delta_{P_1|P_2}(\lambda)=M_{P_2|P_1}(\lambda)D_\varpi M_{P_1|P_2}(\lambda):
\cA^2(P_1)\rightarrow\cA^2(P_2),
\end{equation}
where $\varpi\in\af_M^*$ is such that $\sprod{\varpi}{\alpha^\vee}=1$.
\footnote{Note that this definition differs slightly from the definition of
$\delta_{P_1|P_2}$ in \cite{FLM1}.} Equivalently, writing
$M_{P_1|P_2}(\lambda)=\Phi(\sprod{\lambda}{\alpha^\vee})$ for a
meromorphic function $\Phi$ of a single complex variable, we have
\[
\delta_{P_1|P_2}(\lambda)=\Phi(\sprod{\lambda}{\alpha^\vee})^{-1}
\Phi'(\sprod{\lambda}{\alpha^\vee}).
\]
Recall that for $P,Q\in\cP(M)$, $\langle P,Q\rangle$ denotes the group
generated by $P$ and $Q$.
For any $m$-tuple $\dtup=(Q_1,\dots,Q_m)\in\Xi_L(\bss)$
with $Q_i=\langle P_i,P_i'\rangle$, $P_i|^{\beta_i}P_i'$, denote by 
$\Delta_{\dtup}(P,\lambda)$
the expression
\begin{equation}\label{intertw3}
\frac{\vol(\bss)}{m!}M_{P_1'|P}(\lambda)^{-1}\delta_{P_1|P_1'}(\lambda)M_{P_1'|P_2'}(\lambda) \cdots
\delta_{P_{m-1}|P_{m-1}'}(\lambda)M_{P_{m-1}'|P_m'}(\lambda)\delta_{P_m|P_m'}(\lambda)M_{P_m'|P}(\lambda).
\end{equation}
Recall the (purely combinatorial) map $\dtup_L: \bases_{P,L} \to \FFF_1 (M)^m$ with the property that
$\dtup_L(\bss) \in \Xi_L (\bss)$ for all $\bss \in \bases_{P,L}$ as defined in \cite[pp.\ 179--180]{FLM1}.\footnote{The map $\dtup_L$ depends in fact on the
additional choice of
a vector $\underline{\mu} \in (\aaa^*_M)^m$ which does not lie in an explicit finite
set of hyperplanes. For our purposes, the precise definition of $\dtup_L$ is immaterial.}

For any $s\in W(M)$ let $L_s$ be the smallest Levi subgroup in $\levis(M)$
containing $w_s$. We recall that $\aaa_{L_s}=\{H\in\aaa_M\mid sH=H\}$.
Set
\[
\iota_s=\abs{\det(s-1)_{\aaa^{L_s}_M}}^{-1}.
\]
For $P\in\FFF(M_0)$ and $s\in W(M_P)$ let
$M(P,s):\AF^2(P)\to\AF^2(P)$ be as in \cite[p.~1309]{Ar3}.
$M(P,s)$ is a unitary operator which commutes with the operators $\rho(P,\lambda,h)$ for $\lambda\in\iii\aaa_{L_s}^*$.
Finally, we can state the refined spectral expansion.

\begin{theo}[\cite{FLM1}] \label{thm-specexpand}
For any $h\in C_c^\infty(G(\A)^1)$ the spectral side of Arthur's trace formula is given by
\begin{equation}\label{specside1}
J_{\spec}(h) = \sum_{[M]} J_{\spec,M} (h),
\end{equation}
$M$ ranging over the conjugacy classes of Levi subgroups of $G$ (represented by members of $\mathcal{L}$),
where
\begin{equation}\label{specside2}
J_{\spec,M} (h) =
\frac1{\card{W(M)}}\sum_{s\in W(M)}\iota_s
\sum_{\bss\in\bases_{P,L_s}}\int_{\iii(\aaa^G_{L_s})^*}
\tr(\Delta_{\dtup_{L_s}(\bss)}(P,\lambda)M(P,s)\rho(P,\lambda,h))\ d\lambda
\end{equation}
with $P \in \PPP(M)$ arbitrary.
The operators are of trace class and the integrals are absolutely convergent
with respect to the trace norm and define distributions on $\Co(G(\A)^1)$.
\end{theo}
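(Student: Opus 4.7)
The plan is to derive \eqref{specside2} from Arthur's original (non-invariant) spectral expansion \cite{Ar2}, following the argument of \cite{FLM1}. Arthur expresses $J_{\spec,M}(h)$ through his $(G,M)$-family formalism: one forms logarithmic derivatives of intertwining operators $M_{P'|P}(\lambda)$ indexed by pairs of adjacent parabolic subgroups and combines them via the standard $(G,M)$-family machinery, yielding a somewhat opaque distribution. Our task is to replace this packaging by the symmetric expression \eqref{specside2}, where the roles of the various adjacent pairs $P_i|^{\beta_i}P_i'$ are made explicit through the tuples $\dtup_L(\bss)$.

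First I would recall Arthur's differentiation lemma and invoke the combinatorial identity proved in \cite{FLM1}: for a generic auxiliary vector $\underline{\mu}\in(\af_M^*)^m$, the $(G,M)$-family coefficient at the base point can be rewritten as a sum over $m$-tuples $(Q_1,\dots,Q_m)\in\FFF_1(M)^m$. Grouping the resulting terms according to the basis $\bss=(\beta_1^\vee,\dots,\beta_m^\vee)\in\bases_{P,L}$ of $\af_L^G$ that they span produces the telescoping product \eqref{intertw3}, with the combinatorial weight $\vol(\bss)/m!$ accounting for the contribution of each basis. The map $\dtup_L(\bss)\in\Xi_L(\bss)$ depends on the choice of $\underline{\mu}$, but the sum over $\bss$ does not, as a consequence of the intrinsic nature of Arthur's original expression.

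The genuinely difficult step, and the main content of \cite{FLM1}, is establishing the trace class property and absolute convergence in the trace norm; the naive Arthur formulation provides only convergence of traces after the spectral decomposition, not in trace norm. The plan is to decompose $h=h_1\ast h_2\ast h_3$ via Dixmier--Malliavin so that $\rho(P,\lambda,h_i)$ enjoys rapid decay in both $\|\lambda\|$ and in the Casimir eigenvalue of $\pi$. One then bounds the factors in \eqref{intertw3}: the intertwining operators $M_{P'|P}(\lambda)$ are unitary on $i\af_M^*$, while the logarithmic derivatives $\delta_{P_i|P_i'}(\lambda)$ are controlled by combining the estimate for the global normalizing factors (compare Corollary \ref{cor-est-poles}) with the local estimate for normalized intertwining operators (compare Proposition \ref{prop-log-der}), yielding bounds that grow at most logarithmically in $1+\|\lambda\|^2+\lambda_{\pi_\infty}^2$. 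The strong smoothing from $\rho(P,\lambda,h_i)$ then absorbs this growth, both over the integral in $\lambda$ and over the sum over $\pi\in\Pi_{\di}(M(\A))$.

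Finally, having established \eqref{specside2} and absolute convergence in trace norm for $h\in C_c^\infty(G(\A)^1)$, extension to $\Co(G(\A)^1)$ follows by continuity: the bounds above are controlled by a finite number of Schwartz seminorms $\|f\ast X\|_{L^1(G(\A)^1)}$, so each $J_{\spec,M}$ and hence $J_{\spec}$ extends to a continuous distribution on $\Co(G(\A)^1)$, completing the proof.
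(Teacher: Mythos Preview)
The paper does not prove this theorem at all: it is quoted verbatim from \cite{FLM1}, as the attribution in the theorem heading indicates, and the text immediately after it moves on to applying the result to the heat kernel. So there is nothing in the paper to compare your proposal against; your sketch is an outline of the argument in \cite{FLM1} rather than anything the present paper does.

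That said, one point in your sketch is misleading. You invoke Corollary~\ref{cor-est-poles} and Proposition~\ref{prop-log-der} as models for the bounds on $\delta_{P_i|P_i'}(\lambda)$. Those results in the present paper are proved under the assumption that $G$ satisfies property~(L), whereas Theorem~\ref{thm-specexpand} is stated and proved in \cite{FLM1} for an arbitrary reductive group over a number field, with no such hypothesis. The estimates actually used in \cite{FLM1} for the global normalizing factors come from the unconditional polynomial bounds of \cite{Mu2}, not from property~(L); the sharper logarithmic control you cite is what allows the present paper to go further (Theorem~\ref{thm-spec-side}), but it is not part of the input to Theorem~\ref{thm-specexpand} itself.
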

Now we apply the trace formula to the heat kernel. We recall its definition.
For details see \cite[$\S$ 3]{MM1}. Recall that the underlying symmetric space
is 
\[
\widetilde X=G(F_\infty)^1/\K_\infty,
\]
where $G(F_\infty)^1=G(\A)^1\cap G(F_\infty)$. Note that $G(F_\infty)^1$ is
semisimple and
\begin{equation}
G(F_\infty)=G(F_\infty)^1\cdot \Ag.
\end{equation}
Given $\nu\in\Pi(\K_\infty)$, let $\widetilde E_\nu\to \widetilde X$ be the
associated homogeneous vector bundle. Let $\widetilde\Delta_\nu=
(\widetilde\nabla^\nu)^\ast\widetilde\nabla^\nu$ be the Bochner-Laplace operator
acting in the space $C^\infty(\widetilde X,\widetilde E_\nu)$ of smooth sections
of $\widetilde E_\nu$. This is a $G(F_\infty)^1$-invariant second order elliptic
differential operator. Since 
$\widetilde{X}$ is complete, $\widetilde{\Delta}_{\nu}$, regarded as operator in
$L^2(\widetilde X,\widetilde E_\nu)$  with domain the smooth compactly
supported sections, is essentially self-adjoint \cite[p. 155]{LaM}. Its
self-adjoint extension
will also be denoted by $\widetilde{\Delta}_\nu$. Let $\Omega\in\cZ(\gf_\C)$ 
and $\Omega_{\K_\infty}\in\cZ(\kf)$ be the Casimir operators of $\gf$ and
$\kf$, respectively, where the latter is defined with respect to  the
restriction of the normalized Killing form of $\mathfrak{g}$ to $\mathfrak{k}$.
Then with respect to the isomorphism \eqref{iso-glsect} we have
\begin{align}\label{BLO}
\widetilde{\Delta}_{\nu}=-R(\Omega)+\nu(\Omega_{\K_\infty}),
\end{align} 
where $R$ denotes the right regular representation of $G(F_\infty)$ in 
$C^\infty(G(F_\infty),\nu)$ (see \cite[Proposition 1.1]{Mia}). 

Let $e^{-t\widetilde\Delta_{\nu}}$, $t>0$, be the heat semigroup generated by 
$\widetilde\Delta_\nu$. It commutes with the action of $G(F_\infty)^1$.
With respect
to the isomorphism \eqref{iso-glsect} we may regard $e^{-t\widetilde\Delta_{\nu}}$
as a bounded operator in $L^2(G(F_\infty)^1,\nu)$, which commutes with the
action of $G(F_\infty)^1$.
Hence it is a convolution operator, i.e., there exists a smooth map
\begin{equation}\label{heatker1}
H_t^\nu\colon G(F_\infty)^1\to \End(V_\nu)
\end{equation}
such that
\[
(e^{-t\widetilde\Delta_{\nu}}\phi)(g)=\int_{G(F_\infty)^1} H_t^\nu(g^{-1}g^\prime)
(\phi(g^\prime))\;dg^\prime,\quad \phi\in L^2(G(F_\infty)^1,\nu).
\]
The kernel $H_t^\nu$ satisfies
\begin{align}\label{propH}
{H}^{\nu}_{t}(k^{-1}gk')=\nu(k)^{-1}\circ {H}^{\nu}_{t}(g)\circ\nu(k'),
\:\forall k,k'\in \K_\infty, \forall g\in G(F_\infty)^1.
\end{align}
Moreover, proceeding as in the proof of \cite[Proposition 2.4]{BM} it follows 
that $H^\nu_t$ belongs to
$(\ccC^{q}(G(F_\infty)^1)\otimes\End(V_{\nu}))^{K_\infty\times K_\infty}$ for all $q>0$,
where 
$\ccC^q(G(F_\infty)^1)$ is Harish-Chandra's Schwartz space of $L^q$-integrable 
rapidly decreasing functions on $G(F_\infty)^1$. Put
\begin{equation}\label{loctrace}
h_t^\nu(g):=\tr H_t^\nu(g),\quad g\in G(F_\infty)^1.
\end{equation}
Then $h_t^\nu\in\ccC^q(G(F_\infty)^1)$ for all $q>0$. We extend $h_t^\nu$ to a
function on $G(F_\infty)$ by
\[
h_t^\nu(ag)=h_t^\nu(g),\quad g\in G(F_\infty)^1,\; a\in \Ag.
\]
Let ${\bf 1}_{K_f}\colon G(\A_f)\to \C$ be the characteristic function of $K_f$.
Put
\begin{equation}
\chi_{K_f}:=\frac{{\bf 1}_{K_f}}{\vol(K_f)}
\end{equation}
and
\begin{equation}\label{adelic-heat-ker}
\phi_t^\nu(g):=h_t^\nu(g_\infty)\chi_{K_f}(g_f)
\end{equation}
for $g=g_\infty\cdot g_f\in G(\A)=G(F_\infty)\cdot G(\A_f)$. Now observe that all
derivatives of $\phi_t^\nu$ belong to $L^1(G(\A)^1)$. Thus $\phi_t^\nu$
belongs to $\Co(\bG(\A);K_f)$ (see section \ref{sec-prelim} for its definition).
By Theorem \ref{thm-specexpand}, $J_{\spec}$  is a distribution on 
$\Co(\bG(\A);K_f)$. Thus we can insert $\phi_t^\nu$ into the trace formula and
by Theorem \ref{thm-specexpand} we get
\begin{equation}
J_{\spec}(\phi_t^\nu)=\sum_{[M]} J_{\spec,M} (\phi_t^\nu),
\end{equation}
where the sum ranges over the conjugacy classes of Levi subgroups of $G$ and 
$J_{\spec,M} (\phi_t^{\tau,p})$ is given by \eqref{specside2}.
To analyze these terms, we proceed as in \cite[Section 13]{MM1}.
Recall that the operator $\Delta_{\mathcal{X}}(P,\lambda)$, which 
appears in the formula \eqref{specside2}, is defined by \eqref{intertw3}.
Its definition involves the intertwining operators $M_{Q|P}(\lambda)$. If we 
replace $M_{Q|P}(\lambda)$ by its restriction $M_{Q|P}(\pi,\lambda)$ to
$\cA^2_\pi(P)$, we obtain the restriction $\Delta_{\mathcal{X}}(P,\pi,\lambda)$ of
$\Delta_{\mathcal{X}}(P,\lambda)$ to $\cA^2_\pi(P)$. Similarly, let $\rho_\pi(P,\lambda)$
be the induced representation in $\bar{\cA}^2_\pi(P)$. 
Fix $s\in W(M)$ and $\beta\in\bases_{P,L_s}$.  Then for the integral 
on the right of \eqref{specside2} with $h=\phi_t^\nu$ we get
\begin{equation}\label{specside3}
\sum_{\pi\in\Pi_{\di}(M(\A))}\int_{i(\af^G_{L_s})^*}\Tr\left(
\Delta_{\dtup_{L_s}(\bss)}(P,\pi,\lambda)M(P,\pi,s)\rho_\pi(P,\lambda,\phi^\nu_t)
\right)\;d\lambda.
\end{equation}
In order to deal with the integrand, we need the following result. Let $\pi$
be a unitary  admissible representation of $\bG(F_\infty)$. Let
$A\colon \H_\pi\to\H_\pi$ be a bounded operator which is an intertwining
operator for $\pi|_{\K_\infty}$. Then $A\circ\pi(h_t^\nu)$ is a finite rank
operator. Define an operator $\tilde A$ on $\H_\pi\otimes V_\nu$ by
$\tilde A:=A\otimes\Id$. Then by \cite[(9.13)]{MM1} we have
\begin{equation}\label{TrFT}
\Tr(A\circ\pi(h_t^\nu))=e^{t(\pi(\Omega)-\nu(\Omega_{\K_\infty}))}
\Tr\left(\tilde A|_{(\H_\pi\otimes V_\nu)^{\K_\infty}}\right).
\end{equation}
We will apply this to the induced representation $\rho_\pi(P,\lambda)$. 
Let $m(\pi)$ denote the multiplicity with which $\pi$ occurs in the regular
representation of $M(\A)$ in $L^2_{\di}(\Ai M(F)\bs M(\A))$. Then
\begin{equation}\label{iso-ind}
\rho_\pi(P,\lambda)\cong \oplus_{i=1}^{m(\pi)}\Ind_{P(\A)}^{G(\A)}(\pi,\lambda).
\end{equation}
Fix positive restricted roots of $\af_P$ and let $\rho_{\af_P}$ denote the 
corresponding half-sum of these roots. For $\xi\in \Pi(M(F_\infty))$ and
$\lambda\in\af^\ast_P$ let 
\[
\pi_{\xi,\lambda}:=\Ind_{P(F_\infty)}^{G(F_\infty)}(\xi\otimes e^{i\lambda})
\]
be the unitary induced representation. Let $\xi(\Omega_M)$ be the Casimir
eigenvalue of $\xi$. Define a constant $c(\xi)$ by
\begin{equation}\label{casimir4}
c(\xi):=-\langle\rho_{\af_P},\rho_{\af_P}\rangle+\xi(\Omega_M).
\end{equation}
Then for $\lambda\in\af^\ast_P$ one has
\begin{equation}\label{casimir5}
\pi_{\xi,\lambda}(\Omega)=-\|\lambda\|^2+c(\xi)
\end{equation}
(see \cite[Theorem~8.22]{Kn}). Denote by 
$\widetilde\Delta_{\dtup_{L_s}(\bss)}(P,\pi,\nu,\lambda)$ resp.
$\widetilde M(P,\pi,\nu,s)$ the extensions of
$\widetilde\Delta_{\dtup_{L_s}(\bss)}(P,\pi,\nu,\lambda)$ resp. $M(P,\pi,\nu,s)$
to operators on $\cA^2_\pi(P)\otimes V_\nu$ as above. Using the definition of
$\phi^\nu_t$, \eqref{iso-ind} and \eqref{TrFT}, it follows that 
\eqref{specside3} is equal to
\begin{equation}\label{specside4}
\begin{split}
\sum_{\pi\in\Pi_{\di}(M(\A))}&e^{tc(\pi_\infty)}\\
&\cdot\int_{i(\af^G_{L_s})^*}e^{-t\|\lambda\|^2}\Tr\left(
\bigl(\widetilde\Delta_{\dtup_{L_s}(\bss)}(P,\pi,\nu,\lambda)
\widetilde M(P,\pi,\nu,s)\bigr)\big|_{(\cA^2_\pi(P)^{K_f}\otimes V_\nu)^{\K_\infty}}\right)
\;d\lambda.
\end{split}
\end{equation}
Since $M(P,\pi,s)$ is unitary, \eqref{specside4} can be estimated by
\begin{equation}\label{specside5}
\begin{split}
\sum_{\pi\in\Pi_{\di}(M(\A))}\dim\left(\cA^2_\pi(P)^{K_f,\nu}\right)\cdot e^{tc(\pi_\infty)}
\int_{i(\af^G_{L_s})^*}e^{-t\|\lambda\|^2}\|
\widetilde\Delta_{\dtup_{L_s}(\bss)}(P,\pi,\nu,\lambda)\|_{1,\cA^2_\pi(P)^{K_f,\nu}}\;d\lambda.
\end{split}
\end{equation}
For $\pi\in\Pi(M(\A))$ denote by $\lambda_{\pi_\infty}$ the Casimir eigenvalue
of the restriction of $\pi_\infty$ to $M(F_\infty)^1$. Given $\lambda>0$, let 
\[
\Pi_{\di}(M(\A);\lambda):=\{\pi\in\Pi_{\di}(M(\A))\colon |\lambda_{\pi_\infty}|\le
\lambda\}.
\]
For the estimation of \eqref{specside5} we need the following
auxiliary result.
\begin{lem}\label{lem-est-sum}
Let $d_M=\dim M(F_\infty)^1/\K_\infty^M$. For every open compact subgroup
$K_f\subset G(\A_f)$ and every $\nu\in\Pi(\K_\infty)$ there
exists $C>0$ such that
\begin{equation}
\sum_{\pi\in\Pi_{\di}(M(\A);\lambda)}\dim\cA^2_\pi(P)^{K_f,\nu}\le C(1+\lambda^{d_M/2})
\end{equation}
for all $\lambda\ge 0$.
\end{lem}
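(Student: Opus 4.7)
The plan is to reduce the statement to counting eigenvalues of the Casimir operator on the full discrete spectrum of $M$, and then to split this into a cuspidal and a residual part, applying Donnelly's upper bound \eqref{upper-bd} to the former and Proposition \ref{prop-res} (just established in the previous section) to the latter.

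First I would apply the Frobenius reciprocity estimate \eqref{est-dim-A2}, whose derivation makes no use of cuspidality and extends verbatim to discrete data: combining \eqref{ind-rep-1} with \eqref{dim-isotyp} and \eqref{K-finite-inv} gives
\begin{equation}
\dim\cA^2_\pi(P)^{K_f,\nu}\le C\,m(\pi)\dim(\cH_{\pi_f}^{K_{M,f}})\sum_{\tau\in\cF_M(\nu)}\dim\cH_{\pi_\infty}(\tau),
\end{equation}
where $\cF_M(\nu)=\{\tau\in\Pi(\K_\infty^M):[\nu|_{\K_\infty^M}:\tau]>0\}$ is a finite set and $C$ depends only on $\nu$ and $K_f$.

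Next, summing over $\pi\in\Pi_{\di}(M(\A);\lambda)$ and using the $M(F_\infty)$-module isomorphism of the form \eqref{g-modules-dis} applied to $M$, namely
\begin{equation}
L^2_{\di}(\Ai M(F)\bs M(\A))^{K_{M,f}}\cong \bigoplus_{i=1}^k L^2_{\di}(\Gamma_{M,i}\bs M(F_\infty)^1),
\end{equation}
the adelic sum transforms into
\begin{equation}
\sum_{\pi\in\Pi_{\di}(M(\A);\lambda)}\dim\cA^2_\pi(P)^{K_f,\nu}\le C\sum_{\tau\in\cF_M(\nu)}\sum_{i=1}^k N_{\di}^{\Gamma_{M,i}}(\lambda;\tau),
\end{equation}
where $N_{\di}^{\Gamma_{M,i}}(\lambda;\tau)$ is the Casimir eigenvalue counting function on $L^2_{\di}(\Gamma_{M,i}\bs M(F_\infty)^1,E_{\tau,i})$.

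Finally, I would decompose $N_{\di}^{\Gamma_{M,i}}=N_{\cu}^{\Gamma_{M,i}}+N_{\res}^{\Gamma_{M,i}}$. Donnelly's upper bound \eqref{upper-bd} applied to $M(F_\infty)^1/\K_\infty^M$ of dimension $d_M$ yields $N_{\cu}^{\Gamma_{M,i}}(\lambda;\tau)\ll 1+\lambda^{d_M/2}$, while Proposition \ref{prop-res}, whose statement is formulated directly for arbitrary $M\in\cL$ under property (L), gives $N_{\res}^{\Gamma_{M,i}}(\lambda;\tau)\ll 1+\lambda^{(d_M-1)/2}$, which is of strictly lower order. Since $k$ and $|\cF_M(\nu)|$ are finite and depend only on $K_f$ and $\nu$, summing the finitely many contributions produces the asserted bound $C(1+\lambda^{d_M/2})$.

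The only point that requires a moment of care is the reduction in the first step: the original inequality \eqref{est-dim-A2} is phrased for $\pi\in\Pi_{\cu}(M(\A))$, but the isomorphism \eqref{ind-rep-1} and the $K_\infty$-type counting arguments of \eqref{dim-isotyp}--\eqref{K-finite-inv} are insensitive to this restriction and apply to the whole of $\Pi_{\di}(M(\A))$. After that, everything amounts to feeding Donnelly's classical bound and Proposition \ref{prop-res} into a finite sum, so no further obstacle arises.
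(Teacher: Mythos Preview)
Your proposal is correct and follows essentially the same route as the paper's proof: reduce via \eqref{est-dim-A2} to a finite sum over $\tau\in\cF_M(\nu)$, split into cuspidal and residual contributions, and bound the former by Donnelly's estimate (the paper cites \cite[Lemma~3.2]{Mu3}, which amounts to the same thing) and the latter by Proposition~\ref{prop-res}. The only cosmetic difference is that the paper applies Proposition~\ref{prop-res} directly to the adelic counting function $N_{M,\res}^{K_{M,f},\tau}(\lambda)$ rather than first passing to the classical lattices $\Gamma_{M,i}$; since the adelic counting function is the sum of the nonnegative classical ones, your formulation follows immediately from theirs.
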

\begin{proof}
By \eqref{est-dim-A2} it suffices to fix $\tau\in\Pi(K_{M,\infty})$ and to
estimate the sum
\[
\sum_{\Pi_{\di}(M(\A),\lambda)} m(\pi)\dim(\cH_{\pi_f}^{K_{M,f}})
\dim(\cH_{\pi_\infty}(\tau)).
\]
To estimate the sum over $\Pi_{\cu}(M(\A),\lambda)$ we use
\cite[Lemma 3.2]{Mu3}, which holds for general reductive groups. Thus we get
\begin{equation}\label{est-sum-cusp}
\sum_{\pi\in\Pi_{\cu}(M(\A);\lambda)}\dim\cA^2_\pi(P)^{K_f,\nu}\le C(1+\lambda^{d_M/2}).
\end{equation}
Next observe that for any $\tau\in\Pi(K_{M,\infty})$, by definition of the
counting function we have
\[
\sum_{\pi\in\Pi_{\res}(M(\A);\lambda)}m(\pi)\dim(\cH_{\pi_f}^{K_{M,f}})
\dim(\cH_{\pi_\infty}(\tau))=N_{M,\res}^{K_{M,f},\tau}(\lambda).
\]
By Proposition \ref{prop-res} the right hand side is bounded by a constant
$C>0$ times $1+\lambda^{(d_M-1)/2}$. By \eqref{est-dim-A2} we get
\[
\sum_{\pi\in\Pi_{\res}(M(\A);\lambda)}\cA^2_{\pi}(P)^{K_f,\nu}\le C(1+\lambda^{(d_M-1)/2}).
\]
Together with \eqref{est-sum-cusp} the lemma follows.
\end{proof}
Next we estimate the integral in \eqref{specside5}. We use the notation
introduced above. Let $\bss=(\beta_1^\vee,\dots,\beta_m^\vee)$ and
$\dtup_{L_s}(\bss)=(Q_1,\dots,Q_m)\in\Xi_{L_s}(\bss)$ with
$Q_i=\langle P_i,P_i'\rangle$, $P_i|^{\beta_i}P_i'$, $i=1,\dots,m$.
 Using the definition \eqref{intertw3} of 
$\Delta_{\dtup_{L_s}(\bss)}(P,\pi,\nu,\lambda)$, it follows that we can bound the
integral by a constant multiple of
\begin{equation}\label{est-integral}
\dim(\nu)\int_{i(\af^G_{L_s})^*}e^{-t\|\lambda\|^2}\prod_{j=1}^m\left\| 
\delta_{P_j|P_j^\prime}(\lambda)\Big|_{\cA^2_\pi(P_j^\prime)^{K_f,\nu}}\right\|
\;d\lambda,
\end{equation}
where $\delta_{P_j|P_j^\prime}(\lambda)$ is defined by \eqref{intertw2}.
We introduce new coordinates $s_j:=\langle\lambda,\beta_j^\vee\rangle$,
$j=1,\dots,m$, on $(\af^G_{L_s,\C})^\ast$. 
Using \eqref{normalization} and \eqref{intertw2}, we can write
\begin{equation}\label{delta}
\delta_{P_i|P_i^\prime}(\lambda)=\frac{n^\prime_{\beta_i}(\pi,s_i)}{n_{\beta_i}(\pi,s_i)}
+j_{P_i^\prime}\circ(\Id\otimes R_{P_i|P_i^\prime}(\pi,s_i)^{-1}
R^\prime_{P_i|P_i^\prime}(\pi,s_i))\circ j_{P_i^\prime}^{-1}.
\end{equation}
Now assume that $G$ satisfies property (L). By \cite[Prop. 3.8]{FL1}, $G$
satisfies property (TWN+) (tempered winding numbers, strong version). This
means that for any proper Levi subgroup $M$ of $G$ defined over $\Q$, and any
root $\alpha\in\Sigma_M$, and $T\in\R$ the following estimate holds
\begin{equation}
\int_T^{T+1}|n_\alpha^\prime(\pi,it)| dt\ll\log(|T|+\Lambda(\pi_\infty;p^{\sico})+
\level(\pi;p^{\sico}))
\end{equation}
for all $\Pi_{\di}(M(\A))$. In the proof of Corollary \ref{cor-est-poles}
it was proved that there exists $C>0$ such that for all
$\pi\in\Pi_{\di}(M(\A);K_f,\sigma)$ one has
\begin{equation}\label{estim6}
\level(\pi;p^{\sico})\le C,\quad{\text and}\quad \Lambda(\pi_\infty;p^{\sico})\le
C(1+\lambda_{\pi_\infty}^2).
\end{equation}
Hence we get
\begin{equation}\label{int-log-deriv}
\int_T^{T+1}|n_\alpha^\prime(\pi,it)| dt\ll\log(|T|+1+\lambda_{\pi_\infty}^2)
\end{equation}
For all $T\in\R$ and $\pi\in\Pi_{\di}(M(\A);K_f,\sigma)$.
\begin{lem}\label{lem-log-der}
There exists $C>0$ such that
\begin{equation}
\int_\R\left|\frac{n_\alpha^\prime(\pi,i\lambda)}{n_\alpha(\pi,i\lambda)}\right|
e^{-t\lambda^2}
d\lambda\le C(1+\log(1+\lambda_{\pi_\infty}^2))\frac{1+|\log t|}{\sqrt{t}}
\end{equation}
for all $0< t\le 1$ and $\pi\in\Pi_{\di}(M(\A);K_f,\sigma)$.
\end{lem}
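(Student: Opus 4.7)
The plan is to reduce the logarithmic derivative on the imaginary axis to the bare derivative, apply the TWN+ estimate \eqref{int-log-deriv} interval by interval, and close with a Gaussian integral comparison. Setting $s=i\lambda$ with $\lambda\in\R$ in the functional equation \eqref{funct-equ-5} gives $n_\alpha(\pi,i\lambda)\overline{n_\alpha(\pi,i\lambda)}=1$, hence $|n_\alpha(\pi,i\lambda)|=1$ and
\[
\left|\frac{n_\alpha'(\pi,i\lambda)}{n_\alpha(\pi,i\lambda)}\right|=|n_\alpha'(\pi,i\lambda)|.
\]
Thus it suffices to bound $\int_\R|n_\alpha'(\pi,i\lambda)|e^{-t\lambda^2}\,d\lambda$.

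Set $L:=\log(1+\lambda_{\pi_\infty}^2)$ and decompose $\R$ into unit intervals $[n,n+1]$, $n\in\Z$. The estimate \eqref{int-log-deriv}, which is available since \eqref{estim6} bounds $\level(\pi;p^{\sico})$ uniformly on $\Pi_{\di}(M(\A);K_f,\sigma)$ and controls $\Lambda(\pi_\infty;p^{\sico})$ by $1+\lambda_{\pi_\infty}^2$, yields
\[
\int_n^{n+1}|n_\alpha'(\pi,i\lambda)|\,d\lambda\ll\log(|n|+2)+L\ll(1+L)\log(|n|+2).
\]
For $\lambda\in[n,n+1]$ we have $e^{-t\lambda^2}\le e^{-tn_*^2}$ with $n_*:=\min(|n|,|n+1|)$. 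Summing over $n$ gives
\[
\int_\R|n_\alpha'(\pi,i\lambda)|e^{-t\lambda^2}\,d\lambda\ll(1+L)\sum_{n\in\Z}\log(|n|+2)\,e^{-tn_*^2}.
\]

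It remains to show the last sum is $O((1+|\log t|)/\sqrt{t})$ for $0<t\le 1$. Comparison with the integral (the boundary terms being absorbed into an $O(1)$) and the substitution $y=\lambda\sqrt{t}$ give
\[
\int_\R\log(|\lambda|+2)\,e^{-t\lambda^2}\,d\lambda=\frac{1}{\sqrt{t}}\int_\R\log\!\left(|y|/\sqrt{t}+2\right)e^{-y^2}\,dy.
\]
Since $\log(|y|/\sqrt{t}+2)\le\log(|y|+2)+\tfrac{1}{2}|\log t|$ for $0<t\le 1$ and $\int_\R\log(|y|+2)e^{-y^2}\,dy<\infty$, the right-hand side is $\ll(1+|\log t|)/\sqrt{t}$. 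Combining with the previous step completes the proof.

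The reduction via the functional equation and property (TWN+) is the only essential input; everything after that is a routine sum-versus-integral estimate. The one place where care is needed is in packaging the logarithmic factor through the rescaling $y=\lambda\sqrt{t}$, which is what produces the $|\log t|$ on the right-hand side; otherwise the argument would yield only $1/\sqrt{t}$ times a constant depending on $\pi$, rather than the explicit form required.
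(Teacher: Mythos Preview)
Your proof is correct and follows essentially the same approach as the paper: reduce to $|n_\alpha'(\pi,i\lambda)|$ via the functional equation \eqref{funct-equ-5}, feed in the TWN+ bound \eqref{int-log-deriv}, and finish with a Gaussian integral estimate producing the $(1+|\log t|)/\sqrt t$ factor. The only cosmetic difference is that the paper passes through integration by parts against the antiderivative bound \eqref{int-log-deriv1}, whereas you sum \eqref{int-log-deriv} directly over unit intervals; these are equivalent routine moves.
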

\begin{proof}
By \eqref{funct-equ-5} it follows that $|n_\alpha(\pi,i\lambda)|=1$ for $\lambda
\in\R$. Furthermore, by \eqref{int-log-deriv} we have
\begin{equation}\label{int-log-deriv1}
\int_0^\lambda|n_\alpha^\prime(\pi,iu)|du\le C|\lambda|\log(|\lambda|+1+
\lambda_{\pi_\infty}^2)
\end{equation} 
for $\lambda\in\R$ and $\pi\in\Pi_{\di}(M(\A);K_f,\sigma)$. Hence, using
integration by parts, the integral on the left hand side of the claimed
inequality equals
\[
2t\int_\R\int_0^\lambda|n_\alpha^\prime(\pi,iu)|du \lambda e^{-t\lambda^2} d\lambda.
\]
By \eqref{int-log-deriv1} we get
\[
\begin{split}
\int_\R|n_\alpha^\prime(\pi,i\lambda)|e^{-t\lambda^2}d\lambda&\le
C t\int_\R\log(|\lambda|+1+\lambda_{\pi_\infty}^2)\lambda^2 e^{-t\lambda^2} d\lambda\\
&\le C_1(1+\log(1+\lambda_{\pi_\infty}^2))\frac{1+|\log t|}{\sqrt{t}}
\end{split}
\]
for all $0< t\le 1$ and $\pi\in\Pi_{\di}(M(\A);K_f,\sigma)$.
\end{proof}
Let $l_s=\dim(A_{L_s}/A_G)$.
Combining \eqref{delta}, Lemma \ref{lem-log-der} and Proposition
\ref{prop-log-der} it follows that there exists $C>0$ 
\begin{equation}\label{estim7}
\int_{i(\af^G_{L_s})^*}e^{-t\|\lambda\|^2}\prod_{i=1}^m\left\| 
\delta_{P_i|P_i^\prime}(\lambda)\Big|_{\cA^2_\pi(P_i^\prime)^{K_f,\nu}}\right\|
\;d\lambda\le C(1+|\log t|)^{l_s} t^{-{l_s}/2} \left(1+\log(1+\lambda_{\pi_\infty}^2)
\right)^{l_s}
\end{equation}
for all $0<t\le 1$ and $\pi\in\Pi_{\di}(M(\A))$ with $\cA^2_\pi(P)^{K_f,\nu}$. 
Now we can estimate \eqref{specside5}. Note that $c(\pi_\infty)=\lambda_{\pi_\infty}
-\|\rho_{\af_P}\|^2$. Using \eqref{estim7}, it follows that \eqref{specside3} can
be estimated by a constant multiple of
\begin{equation}\label{estim8}
(1+|\log t|)^{l_s} t^{-{l_s}/2}\sum_{\pi\in\Pi_{\di}(M(\A))}\dim\cA^2_\pi(P)^{K_f,\nu}
\left(1+\log(1+\lambda_{\pi_\infty}^2)\right)^{l_s} e^{t\lambda_{\pi_\infty}}.
\end{equation}
 Let $X_M=M(F_\infty)^1/K_{M,\infty}$ and
$m=\dim X_M$. Using Lemma \ref{lem-est-sum} it follows that for every
$\varepsilon>0$ there
exists $C>0$ such that the series is bounded by $Ct^{-m/2-\varepsilon}$ for
$0<t\le 1$. Together with \eqref{estim8} this yields the following proposition.
\begin{prop}\label{prop-bound-spec-side}
Let $M\in\cL$. Let $m=\dim X_M$ and $l=\max_{s\in W(M)}\dim(A_{L_s}/A_G)$. For
every $\varepsilon>0$ there exists $C>0$ such that
\[
|J_{\spec,M}(\phi_t^\nu)|\le C t^{-(m+l)/2-\varepsilon}
\]
for all $0<t\le 1$.
\end{prop}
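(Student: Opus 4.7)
The plan is to assemble the ingredients already in place: Theorem \ref{thm-specexpand} expresses $J_{\spec,M}(\phi_t^\nu)$ as a finite sum (indexed by $s\in W(M)$ and $\bss\in\bases_{P,L_s}$) of integrals of the form \eqref{specside3}. Applying the trace identity \eqref{TrFT} to each $\rho_\pi(P,\lambda,\phi_t^\nu)$, together with the decomposition \eqref{iso-ind} and the Casimir formula \eqref{casimir5}, converts the trace into the expression \eqref{specside4}. Since $M(P,\pi,s)$ is unitary, each summand is bounded by the trace norm of $\widetilde\Delta_{\dtup_{L_s}(\bss)}(P,\pi,\nu,\lambda)$, yielding \eqref{specside5}.

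Next I would control the inner integral. Expanding the operator $\Delta_{\dtup_{L_s}(\bss)}(P,\pi,\lambda)$ via \eqref{intertw3} as a product of $l_s$ factors $\delta_{P_j|P_j'}(\lambda)$ interleaved with unitary intertwining operators, and changing variables to $s_j=\langle\lambda,\beta_j^\vee\rangle$, the Gaussian factorizes. The decomposition \eqref{delta} splits each $\delta_{P_j|P_j'}$ into a global part $n_{\beta_j}'/n_{\beta_j}$ and a local part involving $R_{P_j|P_j'}(\pi,s_j)^{-1}R'_{P_j|P_j'}(\pi,s_j)$. Lemma \ref{lem-log-der} (which invokes property (TWN+), itself a consequence of property (L)) bounds the integral of the first part by $C(1+\log(1+\lambda_{\pi_\infty}^2))(1+|\log t|)t^{-1/2}$, while Proposition \ref{prop-log-der} bounds the second by $Ct^{-1/2}$. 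Multiplying over the $l_s$ coordinates yields exactly the estimate \eqref{estim7}.

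Substituting \eqref{estim7} back into \eqref{specside5} and using $c(\pi_\infty)=\lambda_{\pi_\infty}-\|\rho_{\af_P}\|^2$, the remaining task is to estimate the heat sum
\[
S(t) := \sum_{\pi\in\Pi_{\di}(M(\A))} \dim\cA^2_\pi(P)^{K_f,\nu}\,\bigl(1+\log(1+\lambda_{\pi_\infty}^2)\bigr)^{l_s} e^{t\lambda_{\pi_\infty}}.
\]
Setting $\mu=-\lambda_{\pi_\infty}$ (bounded below since $\lambda_{\pi_\infty}$ is bounded above on $\Pi_{\di}(M(\A))$), Lemma \ref{lem-est-sum} gives a polynomial bound $O((1+\mu^{m/2})(1+\log(1+\mu))^{l_s})$ for the corresponding counting function. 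Abel summation against $e^{-t\mu}$, followed by the substitution $u=t\mu$, yields $S(t)\ll (1+|\log t|)^{l_s} t^{-m/2}$ for $0<t\le 1$. Combined with the prefactor $(1+|\log t|)^{l_s} t^{-l_s/2}$ from \eqref{estim7}, the total bound is $C(1+|\log t|)^{2l_s} t^{-(m+l_s)/2}$; absorbing the logarithmic factor into $t^{-\varepsilon}$ and taking $l=\max_s l_s$ gives the asserted bound. The main conceptual hurdle has in fact already been cleared before this point, namely the integrated control on logarithmic derivatives in Lemma \ref{lem-log-der} and Proposition \ref{prop-log-der}; here the Gaussian weight supplied by the heat kernel is what allows integrated rather than pointwise bounds to suffice, and property (L) enters both through (TWN+) and through the pole-counting Corollary \ref{cor-est-poles}.
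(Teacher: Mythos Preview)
Your proposal is correct and follows essentially the same route as the paper: the argument in the paper leading from \eqref{specside3} through \eqref{estim8} to Proposition \ref{prop-bound-spec-side} proceeds exactly via \eqref{TrFT}, \eqref{casimir5}, the unitarity of $M(P,\pi,s)$, the decomposition \eqref{delta}, Lemma \ref{lem-log-der} and Proposition \ref{prop-log-der} to reach \eqref{estim7}, and then Lemma \ref{lem-est-sum} to control the resulting heat sum. The only cosmetic difference is that the paper simply asserts the series in \eqref{estim8} is $O(t^{-m/2-\varepsilon})$ by Lemma \ref{lem-est-sum}, whereas you spell out the Abel-summation step and track an extra $(1+|\log t|)^{l_s}$ from the logarithmic weight in the sum; either way the logarithmic powers are absorbed into $t^{-\varepsilon}$.
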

Now we distinguish two cases. First assume that $M=G$. Then $A_M=A_G$. Let
$R^1_{\di}$ be the restriction of the regular representation $R^1$ of $G(\A)^1$
in $L^2(G(F)\bs G(\A)^1)$ to the discrete subspace. Then $J_{\spec,G}(\phi_t^\nu)=
Tr(R^1_{\di}(\phi_t^{\nu,1})$. Let $R_{\di}$ be the regular representation of
$G(\A)$ in $L^2(\Ag G(F)\bs G(\A))$. Then the operator $R_{\di}(\phi_t^\nu)$
is isomorphic to $R^1(\phi_t^{\nu,1})$. Thus
\begin{equation}\label{spec-side-G}
J_{\spec,G}(\phi_t^{\nu,1})=\Tr(R_{\di}(\phi_t^\nu)).
\end{equation}
Given $\pi\in\Pi_{\di}(G(\A))$, let $m(\pi)$ denote the multiplicity with
which $\pi$ occurs in the regular representation of $G(\A)$ in
$L^2(\Ag G(F)\bs G(\A))$. Then, using Corollary 2.2 in \cite{BM}, we get
\begin{equation}\label{spec-side-G-1}
J_{\spec,G}(\phi_t^{\nu,1})=\sum_{\pi\in\Pi_{\di}(G(\A),\xi_0)}m(\pi)\dim(\cH_{\pi_f}^{K_f})
\dim(\cH_{\pi_\infty}\otimes V_\nu)^{K_\infty} e^{t\lambda_{\pi_\infty}}.
\end{equation}
Now assume that $M$ is a proper Levi subgroup. Let $P=M\ltimes N$. Let
$\widetilde X=G(F_\infty)^1/\K_\infty$. Then
\[
\widetilde X\cong X_M\times \Ai/\Ag\times N(F_\infty).
\]
Since $l=\max_{s\in W(M)}\dim(A_{L_s}/A_G)\le \dim(A_M/A_G)$, it follows that
$m+l\le \dim\widetilde X-1$. Thus using this together with Proposition
\ref{prop-bound-spec-side}, we get
\begin{theo}\label{thm-spec-side}
Suppose that $G$ satisfies property (L). Let $n=\dim\widetilde X$. For every
open compact subgroup $K_f$ of $G(\A_f)$ and every $\nu\in\Pi(\K_\infty)$ the
spectral side of the trace formula, evaluated at $\phi_t^{\nu,1}$, satisfies
\begin{equation}
\begin{split}
J_{\spec}(\phi_t^{\nu,1})=&\sum_{\pi\in\Pi_{\di}(G(\A))}m(\pi)\dim(\cH_{\pi_f}^{K_f})
\dim(\cH_{\pi_\infty}\otimes V_\nu)^{K_\infty} e^{t\lambda_{\pi_\infty}}\\
&+O(t^{-(n-1)/2})
\end{split}
\end{equation}
as $t\to 0^+$.
\end{theo}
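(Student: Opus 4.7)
The plan is to read off the theorem from the spectral decomposition in Theorem~\ref{thm-specexpand}, isolating the $M=G$ term as the main contribution and bounding every proper Levi contribution by $O(t^{-(n-1)/2})$. First, I would apply Theorem~\ref{thm-specexpand} to write
\[
J_{\spec}(\phi_t^{\nu,1}) = J_{\spec,G}(\phi_t^{\nu,1}) + \sum_{[M]\neq [G]} J_{\spec,M}(\phi_t^{\nu,1}),
\]
so the task splits into identifying the $M=G$ summand and estimating the remainder.

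For $M=G$, there is no $\af_{L_s}$-integration (since $L_s = G$ forces $s=1$), so $\Delta_{\dtup_{L_s}(\bss)}(P,\lambda) = \Id$ and the sum in \eqref{specside2} collapses to $\Tr R_{\di}(\phi_t^\nu)$. Using that $\phi_t^\nu = h_t^\nu \otimes \chi_{K_f}$, invoking \eqref{TrFT} for each $\pi = \pi_\infty \otimes \pi_f \in \Pi_{\di}(G(\A))$, and noting that $\pi_f(\chi_{K_f})$ is the orthogonal projection onto $\cH_{\pi_f}^{K_f}$ (up to a factor absorbed by the normalization of $\chi_{K_f}$), we obtain precisely the formula \eqref{spec-side-G-1}; this is the asserted main term.

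For each proper $M\in\cL$, Proposition~\ref{prop-bound-spec-side} gives
\[
\abs{J_{\spec,M}(\phi_t^{\nu,1})} \le C\, t^{-(m+l)/2 - \varepsilon}, \qquad 0 < t \le 1,
\]
where $m = \dim X_M$ and $l = \max_{s\in W(M)} \dim(A_{L_s}/A_G)$. The key dimension-counting step is to verify $m + l \le n-1$ for $M\neq G$. Since $L_s \supset M$ we have $A_{L_s} \subset A_M$, hence $l \le \dim(A_M/A_G)$. Combining with the decomposition
\[
\widetilde X \cong X_M \times (A_M/A_G) \times N_P(F_\infty),
\]
which gives $n = m + \dim(A_M/A_G) + \dim N_P(F_\infty)$, and using that $\dim N_P(F_\infty) \ge 1$ because $P \neq G$, we conclude $m + l \le n - 1$. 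Choosing $\varepsilon > 0$ small enough that $(m+l)/2 + \varepsilon \le (n-1)/2$ (possible since we can pick $\varepsilon < 1/2$), the sum over the finitely many conjugacy classes $[M] \neq [G]$ contributes $O(t^{-(n-1)/2})$ as $t \to 0^+$.

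The main substantive obstacle has already been dispatched in building Proposition~\ref{prop-bound-spec-side}: controlling the integrals of logarithmic derivatives of intertwining operators in \eqref{intertw3} required both the global estimate (Lemma~\ref{lem-log-der}, which uses property (TWN+), a consequence of (L) via \cite{FL1}) and the local estimate (Proposition~\ref{prop-log-der}), together with the polynomial bound on $\dim \cA^2_\pi(P)^{K_f,\nu}$ of Lemma~\ref{lem-est-sum}. Granted these inputs, the theorem reduces to the elementary dimension inequality $m + l \le n-1$ described above.
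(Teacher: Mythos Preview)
Your argument is the paper's argument essentially verbatim: split $J_{\spec}$ via Theorem~\ref{thm-specexpand}, identify the $M=G$ term with \eqref{spec-side-G-1}, and for proper $M$ combine Proposition~\ref{prop-bound-spec-side} with the dimension count $m+l\le n-1$ obtained from $\widetilde X\cong X_M\times (A_M/A_G)\times N_P(F_\infty)$ and $l\le\dim(A_M/A_G)$.

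One small caveat (also present in the paper's exposition): your parenthetical ``possible since we can pick $\varepsilon<1/2$'' is not correct when $m+l=n-1$ exactly, which does occur (e.g.\ $G=\SL(2)$ over $\Q$, where $m=0$, $l=1$, $n=2$); in that case no $\varepsilon>0$ gives $(m+l)/2+\varepsilon\le (n-1)/2$, and Proposition~\ref{prop-bound-spec-side} literally only yields $O(t^{-(n-1)/2-\varepsilon})$. This is harmless for the Weyl law since any such bound is $o(t^{-n/2})$, and a slightly more careful treatment of the logarithmic factors in \eqref{estim8} would recover a power of $\log t$ in place of the $\varepsilon$-loss; but as written the justification for the exact exponent $O(t^{-(n-1)/2})$ is incomplete.
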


\section{Geometric side of the trace formula}
\setcounter{equation}{0}
As before, $G$ is a reductive group over a number field $F$.
In this section we consider the geometric side of the Arthur trace formula
$J_{\geo}$ evaluated at $\phi_t^\nu$ and determine the asymptotic behavior of
$J_{\geo}(\phi_t^\nu)$ as $t\to 0$. The geometric side $J_{\geo}$ of the trace
formula was introduced in \cite{Ar1}. See also \cite{Ar5}. For
$f\in C_c^\infty(G(\A)^1)$, Arthur has defined $J_{\geo}(f)$ as the value at a
point $T_0\in\af_0$, specified in \cite[Lemma 1.1]{Ar3}, of a polynomial
$J^T(f)$ on
$\af_0$. By \cite[Theorem 7.1]{FL3}, $J_{\geo}(f)$ is absolutely convergent
for all $f\in\Co(G(\A);K_f)$. Let $\phi_t^\nu\in \Co(G(\A);K_f)$ be the
function which is defined by \eqref{adelic-heat-ker}.
Then $J_{\geo}(\phi_t^\nu)$ is well defined. In \cite[(1.5)]{MM2},
the regularized trace of the heat operator $e^{-t\Delta_\nu}$ was defined as
\[
\Tr_{\reg}\left(e^{-t\Delta_\nu}\right):=J_{\geo}(\phi_t^\nu).
\]
Then in \cite[Theorem 1.1]{MM2} an asymptotic expansion  of
$\Tr_{\reg}\left(e^{-t\Delta_\nu}\right)$ as $t\to 0$ has been established. For our
purpose we need to know the precise form of the term of order $t^{-n/2}$, where
$n=\dim X$. To this end we briefly recall the derivation of the asymptotic
expansion. The first step is to replace $\phi_t^\nu$ by an appropriate
compactly supported function $\tilde \phi_t^\nu$ with support concentrated near
the identity element. Such a function is constructed as follows.

Let $d(\cdot, \cdot): \tilde X\times\tilde X\longrightarrow [0,\infty)$ be the geodesic distance on $\tilde X$, and put $r(g_\infty)=d(g_\infty x_0, x_0)$ where $x_0=\K_\infty\in \tilde X$ is the base point. Let $0<a<b$ be sufficiently small real numbers and let $\beta:\R\longrightarrow [0,\infty)$ be a smooth function supported in $[-b,b]$ such that $\beta(y)=1$ for $0\le |y|\le a$, and $0\le\beta(y)\le1$ for $|y|>a$. Define 
\begin{equation}\label{eq:def:psi}
 \psi^\nu_t(g_\infty) 
 = \beta(r(g_\infty)) h_t^\nu(g_\infty).
\end{equation}
and
\begin{equation}\label{eq:def:phi:tilde}
 \tilde\phi^\nu_t(g)
 = \psi^\nu_t(g_\infty)\chi_{K_f}(g_f)
\end{equation}
for $g=g_\infty\cdot g_f\in G(\A)=G(F_\infty)\cdot G(\A_f)$.
Then $\tilde\phi^\nu_t\in C_c^\infty(\bG(\A)^1)$ and $\psi^\nu_t\in
C^\infty_c(\bG(F_\infty)^1)$.
By \cite[Proposition~12.1]{MM1} there is some $c>0$ such that for every $0<t\le 1$ we have
\begin{equation}\label{geom-approx}
 \left|J_{\geo}(\phi_t^\nu)- J_{\geo}(\tilde\phi_t^\nu)\right|
 \ll e^{-c/t}.
\end{equation}
We note that in \cite[Sect. 12]{MM1} we made the assumption that $\bG=\GL(n)$
or $\bG=\SL(n)$. However, the proof of the  proposition holds without any 
restriction on $\bG$. The next result reduces the considerations to the 
unipotent contribution to the geometric side.
Before we state it, we recall the coarse geometric expansion of Arthur's trace formula \cite[Sect. 10]{Ar5}: Two elements $\gamma_1, \gamma_2\in G(F)$ are called coarsely equivalent if their semisimple parts (in the Jordan decomposition)  are conjugate in $\bG(F)$. Then for any $f\in C^\infty_c(\bG(\A)^1)$ we have
\[
 J_{\geo}(f)=\sum_{\of} J_{\of} (f),
\]
where $\of$ runs over the coarse equivalence classes in $\bG(F)$, and the distribution $J_{\of} $ is supported in the set of all $g\in \bG(\A)^1$ whose semisimple part is conjugate in $\bG(\A)$ to some semisimple element in $\of$. If $\of\neq\of'$, the supports of $J_{\of}$ and $J_{\of'}$ are disjoint.
Note that the set of unipotent elements in $\bG(F)$ constitute a single
equivalence class $\of_{\text{unip}}$ and we write
$J_{\text{unip}}=J_{\of_{\text{unip}}}$. 
Assume that $K_f$ is neat. If the support of $\beta$ is sufficiently small
then by \cite[Prop. 3.1]{MM2} we have 
\begin{equation}\label{unipotent1}
J_{\geo}(\tilde\phi_t^\nu)=J_{\unip}(\tilde\phi_t^\nu).
\end{equation}
By \eqref{geom-approx} and \eqref{unipotent1} the problem is reduced to the
study of the asymptotic behavior of $J_{\unip}(\tilde\phi_t^\nu)$ as $t\searrow0$.
For this purpose we use Arthur's fine geometric expansion of  $J_{\text{unip}}$.
In order to state it we need to introduce some notation.

Let $S$ be a finite
set of places of $F$, which includes the archimedean places, such that
$K_v=\cpt_v$ for $v\not\in S$. Let $G(F_S)^1=G(F_S)\cap G(\A)^1$.
Let $M\in\CmL$. Following Arthur, we introduce an equivalence relation on the set of unipotent elements in $M(F)$  that depends on the set $S$: Two unipotent elements $u,v\in M(F)$ are $(M,S)$-equivalent if and only if $u$ and $v$ are conjugate in $M(F_S)$. We denote the equivalence class of $u$ by $[u]_S\subseteq M(F)$ and let $\CmU_S^M$ denote the set of all such equivalence classes.

Note that two equivalent unipotent elements define the same unipotent conjugacy class in $M(F_S)$, so we can view $\CmU^M_S$ also as the set of unipotent conjugacy classes in $M(F_S)$ which have at least one $F$-rational representative,
and we denote the corresponding conjugacy class by $[u]_S$ as well. 
\begin{remark}
 \begin{enumerate}[label=(\roman{*})]
  \item If $T\subseteq S$, then we get a well-defined map $\CmU^M_S\ni[u]_S\mapsto [u]_T\in\CmU^M_T$. 

  \item If $G=\GL(n)$, the equivalence relation is independent of $S$ and is the same as conjugation in $M(F)$.
 \end{enumerate}
\end{remark}
For $[u]_S\in \CmU^M_S$ and $f_S\in C^\infty_c(\bG(F_S)^1)$, Arthur associates a weighted orbital integral $J_M^G([u]_S, f_S)$ \cite{Ar6} which is a distribution supported on  the $\bG(F_S)$-conjugacy class induced from
$[u]_S\subseteq M(F_S)$. Let $\One_{\cpt^S}\in C_c^\infty(\bG(\A^S))$ be the characteristic function
of $\cpt^S$, if $f_S\in C_c^\infty(\bG(F_S)^1)$. Put
$f=f_S\One_{\cpt^S}\in C^\infty_c(\bG(\A)^1)$. By \cite[Corollary 8.3]{Ar7}
there exist unique constants $a^M([u]_S, S)\in\C$ and conjugacy classes
$[u]_S\in \CmU^M_S$, such that for all $f_S\in C_c^{\infty}(\bG(F_S)^1)$ we have
\begin{equation}\label{fine-geom-expans}
J_{\text{unip}}(f)
= \sum_{M\in \CmL}\sum_{[u]_S\in\CmU^M_S} a^M([u]_S, S) J_M^{\bG}([u]_S, f_S).
\end{equation}
In fact, Corollary 8.3 in \cite{Ar7} is stated only for reductive groups over
$\Q$. However, at the end of the article, Arthur explains that all results of
the article hold equally well for reductive groups over a number field $F$.

In general, there is not much known about the coefficients $a^M([u]_S, S)$.
However, for our purpose we only need to know $a^G(1,S)$, which by
\cite[Corollary 8.5]{Ar7} is given by
\begin{equation}\label{lead-coeff}
a^G(1,S)=\vol(G(F)\bs G(\A)^1).
\end{equation}
Write $S=S_\infty\sqcup S_0$. Then $K_f=K_{S_0}\K^S$. Recall that by
\eqref{eq:def:phi:tilde}
\[
\tilde\phi_t^\nu=\frac{1}{\vol(K_f)}\psi_t^\nu\cdot{\bf 1}_{K_{S_0}}
\cdot{\bf 1}_{\K^S}.
\]
Then by \eqref{fine-geom-expans} we get
\begin{equation}\label{orb-int}
J_{\unip}(\tilde\phi_t^\nu)=\sum_{M\in \CmL}\sum_{[u]_S\in\CmU^M_S}
\frac{a^M([u]_S, S)}{\vol(K_f)} J_M^{\bG}([u]_S, \psi_t^\nu\cdot {\bf 1}_{K_{S_0}}).
\end{equation}
Using \eqref{lead-coeff}, the term that corresponds to $(G,1)$ equals
\begin{equation}\label{lead-term2}
\frac{\vol(G(F)\bs G(\A)^1)}{\vol(K_f)}h_t^\nu(1).
\end{equation}
To deal with the weighted orbital integrals in general, we use Arthur's
splitting formula
\cite[(18.7)]{Ar5}, which we recall next. Let $S$ be any finite set of places
of $F$ which not necessarily contains the archimedean places. Let $L\in\cL(M)$
and $Q\in\cP(L)$. Given $f_S\in G(F_S)$ let
\[
f_{S,Q}(m)=\delta_Q(m)^{1/2}\int_{\K_S}\int_{N_Q(F_S)} f_S(k^{-1}mnk)\, dn\, dk,\quad
m\in L.
\]
Suppose that $S=S_1\cup S_2$ with $S_1$, $S_2$ non-empty and disjoint, and that $f_S$ is the restriction of a product $f_{S_1}f_{S_2}$ to $G(F_S)^1$ with
$f_{S_j}\in C^\infty(G(F_{S_j}))$, $j=1,2$. Then the splitting formula states that
\begin{equation}\label{eq-splitting}
 J_M^{\bG}([u]_S, f_S) 
 = \sum_{L_1,\,L_2\in\CmL(M)} d_M^{\bG}(L_1,L_2) J_{M}^{L_1}([u]_{S_1}, f_{S_1,Q_1}) J_M^{L_2}([u]_{S_2}, f_{S_2,Q_2}),
\end{equation}
where the notation is as follows: The $d_M^{\bG}(L_1,L_2)\in\R$ are certain constants which depend only on $M, L_1, L_2, \bG$ but not on $S$. In fact, $d_M^{\bG}(L_1, L_2)$ is non-zero only if the natural map $\af_M^{L_1}\oplus \af_M^{L_2}\longrightarrow\af_M^{\bG}$ is an isomorphism. The $Q_j$ are arbitrary elements in $\CmP(L_j)$ and $[u]_{S_j}\in\cU_{S_j}^M$ is the image of $[u]_S$ under the
canonical map $\cU_S^M\to \cU_{S_j}^M$.
Finally, $J_M^{L_j}([u]_{S_j}, \cdot)$ denotes the $S_j$-adic distribution which
is supported on the $L_j(F_{S_j})$-conjugacy class which is induced from
$[u]_{S_j}\subseteq M(F_{S_j})$ and is defined as in \cite{Ar6}.

We apply the splitting formula to the weighted orbital integral on the right
of \eqref{orb-int} with $S_1=S_\infty$ and $S_2=S_0$. We obtain
\begin{equation}\label{splitting1}
 J_M^{\bG}([u]_S, \psi_t^\nu \cdot \One_{K_{S_0}})
 = \sum_{L_1,\,L_2\in\CmL(M)} d_M^{\bG}(L_1,L_2) J_{M}^{L_1}([u]_{\infty}, \psi^\nu_{t,Q_1}) J_M^{L_2}([u]_{S_0}, \One_{K_{S_0},Q_2}).
\end{equation}
This is a finite sum with $d_M^{\bG}(L_1,L_2)$ and
$J_M^{L_2}([u]_{S_0}, \One_{K_{S_0},Q_2})$ independent of $t$. The asymptotic
expansion in $t$ of weighted orbital integrals of the form
$J_{M}^{L_1}([u]_{\infty},\psi^\nu_{t,Q_1})$ has been determined in
\cite[Prop. 7.2]{MM2}. This Corollary has been proved for groups over $\Q$.
However, the proof can be easily extended to reductive groups over $F$, either
by repeating the arguments or using restriction of scalars,

We recall the proposition. Let $M\in\cL$, $P_1=M_1N_1\in
\cF(M)$ and $\cO\subset M(F_\infty)$ a unipotent conjugacy class in
$M(F_\infty)$. Let
$d_{\cO}=\dim \cO^{G(F_\infty)^1}$ be the dimension of the unipotent orbit in
$G(F_\infty)^1$ induced from $M(F_\infty)$, and let $r^{M_1}_M=\dim \af_M^{M_1}$.
Then there
exist constants $b_{ij}=b_ij(M,\cO)\in\C$, $j\ge 0$, $0\le i\le r_M^{M_1}$, such
that for $0<t\le 1$
\begin{equation}\label{asympt-exp7}
 J_M^{M_1}(\CmO, \left(\psi_t^\nu\right)_{P_1})
\sim t^{-n/2  + d_\CmO^G/4} \sum_{j=0}^\infty \sum_{i=0}^{r_M^{M_1}} b_{ij} t^{j/2}
(\log t)^i.
\end{equation}
If $K_f$ is neat, then $d_{\cO}^G>0$ for $\cO\neq 1$. Combining
\eqref{geom-approx}--\eqref{asympt-exp7} it follows that for every $\nu\in
\Pi(K_\infty)$ there exist $\varepsilon>0$ such that
\begin{equation}\label{asympt-exp4}
J_{\geo}(\phi_t^\nu)=\vol(X(K_f))h_t^\nu(1)+O(t^{-n/2+\varepsilon})
\end{equation}
for all $0<t\le 1$.
By \cite[Lemma 2.3]{Mu3} we have
\begin{equation}
h_t^\nu(1)=\frac{\dim(\nu)}{(4\pi)^{n/2}}t^{-n/2}+O(t^{-(n-1)/2})
\end{equation}
as $t\searrow0$.
Together with \eqref{asympt-exp4} we obtain the following proposition.
\begin{prop}\label{prop-geo-side}
Let $G$ be a reductive group over a number field $F$. Let $K_f$ be an open
compact subgroup of $G(\A)$. Assume that $K_f$ is neat.
Then for every $\nu\in\Pi(K_\infty)$
there exists $\varepsilon>0$ such that we have
\[
J_{\geo}(\phi_t^\nu)=\frac{\dim(\nu)\vol(X(K_f))}{(4\pi)^{n/2}}t^{-n/2}+
O(t^{-n/2+\varepsilon})
\]
for all $0<t\le 1$.
\end{prop}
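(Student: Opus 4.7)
The plan is to follow the chain of reductions laid out just before the statement and to make the error estimates explicit. First I would pass from $\phi_t^\nu$ to the compactly supported truncation $\tilde\phi_t^\nu$ defined in \eqref{eq:def:phi:tilde}. By \eqref{geom-approx} the difference $|J_{\geo}(\phi_t^\nu)-J_{\geo}(\tilde\phi_t^\nu)|$ is bounded by $e^{-c/t}$ for some $c>0$, so this error is absorbed in $O(t^{-n/2+\varepsilon})$ for any $\varepsilon>0$.

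Next, because we assume that $K_f$ is neat and the support of $\beta$ is chosen small, \eqref{unipotent1} reduces the geometric side to the unipotent contribution, $J_{\geo}(\tilde\phi_t^\nu)=J_{\unip}(\tilde\phi_t^\nu)$. Applying Arthur's fine geometric expansion \eqref{fine-geom-expans}, and using the normalization $\tilde\phi_t^\nu=\vol(K_f)^{-1}\psi_t^\nu\cdot\mathbf 1_{K_{S_0}}\cdot\mathbf 1_{\K^S}$ together with the value $a^G(1,S)=\vol(G(F)\bs G(\A)^1)$ from \eqref{lead-coeff}, the $(M,[u])=(G,1)$ term contributes exactly
\[
\frac{\vol(G(F)\bs G(\A)^1)}{\vol(K_f)}\,h_t^\nu(1)=\vol(X(K_f))\,h_t^\nu(1).
\]
All other terms $(M,[u]_S)$ with $(M,[u])\neq (G,1)$ are then treated via Arthur's splitting formula \eqref{eq-splitting}, applied to the decomposition $S=S_\infty\sqcup S_0$, as in \eqref{splitting1}. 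The non-archimedean factors $J_M^{L_2}([u]_{S_0},\mathbf 1_{K_{S_0},Q_2})$ and the constants $d_M^G(L_1,L_2)$ are independent of $t$, so the whole $t$-dependence sits in the archimedean weighted orbital integrals $J_M^{L_1}([u]_\infty,(\psi_t^\nu)_{Q_1})$.

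For these, I would invoke the asymptotic expansion \eqref{asympt-exp7}, which gives each such term an order $t^{-n/2+d_\cO^G/4}$ up to logarithmic factors. The key point is that neatness of $K_f$ forces $d_\cO^G>0$ for every induced orbit corresponding to a pair different from $(G,1)$; hence each non-leading term contributes $O(t^{-n/2+\varepsilon_0}(\log t)^{r})$ for some $\varepsilon_0>0$ depending on the smallest positive $d_\cO^G/4$ appearing, and this is absorbed in $O(t^{-n/2+\varepsilon})$ after decreasing $\varepsilon_0$ slightly. Combining these ingredients yields
\[
J_{\geo}(\phi_t^\nu)=\vol(X(K_f))\,h_t^\nu(1)+O(t^{-n/2+\varepsilon}).
\]

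To finish I would insert the small-time expansion of the heat kernel on the diagonal, $h_t^\nu(1)=\dim(\nu)(4\pi)^{-n/2}t^{-n/2}+O(t^{-(n-1)/2})$ from \cite[Lemma 2.3]{Mu3}, to obtain the claimed leading term with a remainder $O(t^{-n/2+\varepsilon})$ (after shrinking $\varepsilon$ if necessary to accommodate the $1/2$-power improvement coming from $h_t^\nu(1)$). The only non-routine step is verifying that the fine geometric expansion and the splitting formula — which Arthur stated over $\Q$ — apply verbatim over the number field $F$, and that neatness of $K_f$ indeed eliminates the trivial unipotent orbit from every proper Levi contribution; both points are handled in \cite{Ar7} and \cite[Prop.~3.1]{MM2} respectively, so no genuine obstacle arises beyond bookkeeping.
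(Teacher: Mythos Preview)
Your proposal is correct and follows exactly the paper's argument: truncate to $\tilde\phi_t^\nu$ via \eqref{geom-approx}, reduce to $J_{\unip}$ via neatness and \eqref{unipotent1}, apply the fine expansion \eqref{fine-geom-expans} and the splitting formula \eqref{splitting1}, and then use the archimedean asymptotics \eqref{asympt-exp7} together with the diagonal heat-kernel expansion $h_t^\nu(1)=\dim(\nu)(4\pi t)^{-n/2}+O(t^{-(n-1)/2})$. One small clarification: the positivity $d_\cO^G>0$ for every pair $(M,[u])\neq(G,1)$ is a general fact about induced unipotent orbits (the orbit induced from the trivial class in a proper Levi is the nontrivial Richardson orbit), not a consequence of neatness---neatness is used only in \eqref{unipotent1} to kill the non-unipotent equivalence classes, so your closing remark about what \cite[Prop.~3.1]{MM2} handles should be rephrased accordingly.
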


\section{Proof of the main theorem}\label{sect-proof-main-thm}
\setcounter{equation}{0}
First we establish the adelic version of the Weyl law, which is Theorem
\ref{main-thm-adelic}. Let $G_0$ be a reductive algebraic group over a number
field $F$ and let $G=\Res_{F/\Q}(G_0)$ be the reductive group over $\Q$ which
is obtained from $G_0$ by restriction of scalars. We shall use
the (non-invariant) Arthur trace formula for reductive groups over $F$
to deduce the Weyl law for $G_0$. Then we use the properties of the restriction
of scalars to show that this is equivalent to the Weyl law for $G$.

To begin with we recall that the coarse Arthur trace formula over $F$ is the
identity
\[
J_{\spec}(f)=J_{\geo}(f),\quad f\in\Co(G(\A_F)^1).
\]
Applied to $\phi_t^\nu$ we get the equality
\[
J_{\spec}(\phi_t^\nu)=J_{\geo}(\phi_t^\nu),\quad t>0.
\]
Assume that $G_0$ satisfies property (L). Let $K_{0,f}$ be an open compact
subgroup of $G_0(\A_{F,f})$. We assume that $K_{0,f}$ is neat.
Combining Theorem \ref{thm-spec-side} and Proposition \ref{prop-geo-side},
we obtain
\begin{equation}
\begin{split}
\sum_{\pi\in\Pi_{\di}(G_0(\A_F))}m(\pi)\dim(\cH_{\pi_f}^{K_{0,f}})
\dim\left(\cH_{\pi_\infty}\otimes V_\nu\right)^{K_\infty}\,e^{t\lambda_{\pi_\infty}}=&
\frac{\dim(\nu)\,\vol(X(K_{0,f}))}{(4\pi)^{n/2}}t^{-n/2}\\
&+O(t^{-(n-1)/2})
\end{split}
\end{equation}
as $t\searrow0$, where $X(K_{0,f})$ is defined by \eqref{adel-quot2}.
Let $N^{K_{0,f},\nu}_{\di}(\lambda)$ be the adelic counting
function defined by \eqref{adelic-count-fct1}. 
Applying Karamata's theorem \cite[p. 446]{Fe}, we obtain
\begin{equation}\label{adelic-weyl-law3}
N^{K_{0,f},\nu}_{\di}(\lambda)=
\frac{\dim(\nu)\vol(X(K_{0,f}))}{(4\pi)^{n/2}\Gamma(n/2+1)}\lambda^{n/2}+
o(\lambda^{n/2})
\end{equation}
as $\lambda\to\infty$. By Proposition \ref{prop-res} we get
\begin{equation}\label{adelic-weyl-law4}
N^{K_{0,f},\nu}_{\cu}(\lambda)=
\frac{\dim(\nu)\vol(X(K_{0,f}))}{(4\pi)^{n/2}\Gamma(n/2+1)}\lambda^{n/2}+
o(\lambda^{n/2}).
\end{equation}
This is the first part of the Weyl law for $G_0$. The second part is the
estimation of the counting function of the residual spectrum which
follows from Proposition \ref{prop-res} for $M=G$.

Next we show that Theorem \ref{main-thm-adelic} is compatible with the
restriction of scalars. To begin with we recall some facts about the Weil
restriction of scalars \cite{We}, \cite{Bo2}. By \cite[Theorem 1.3.2]{We} we
have
\begin{equation}\label{restr-scal1}
G(\Q_v)=\prod_{w|v}G_0(F_w).
\end{equation}
for all places $v$ of $\Q$. In particular, we get
\begin{equation}\label{restr-scal2}
G(\A_\Q)=G_0(\A_F),\quad G(\R)=G_0(F_\infty)=\prod_{w\in S_\infty}G_0(F_w),\quad
G(\Q)=G_0(F).
\end{equation}
Therefore we obtain a bijection of the automorphic representations of $G_0$
with those of $G$. Also the regular representation of $G(\A_\Q)$ on
$L^2(G(\Q)\bs G(\A_\Q))$ is equivalent to the regular representation of
$G_0(\A_F)$ on $L^2(G_0(F)\bs G_0(\A_F))$. 
Furthermore, by \cite[5.2]{Bo2}, the map
$P_0\mapsto \Res_{F/\Q}(P_0)$
induces a bijection between parabolic subgroups of $G_0$, defined over $F$, and
parabolic subgroups of $G$, defined over $\Q$, and \eqref{restr-scal1}
and \eqref{restr-scal2} continue to hold for $F$-parabolic subgroups of $G_0$.
Let $P_0=M_{P_0}N_{P_0}$ be a $F$-parabolic subgroup of $G_0$ and
$P=\Res_{F/\Q}(P_0)$. Let $f\in L^2(G(\Q)\bs G(\A_Q)^1)$ and $\tilde f\in
L^2(G_0(F)\bs G_0(\A_F)^1)$ correspond to each other. Then
\begin{equation}
\int_{N_P(\Q)\bs N_P(\A_\Q)}f(nx) dn=\int_{N_{P_0}(F)\bs N_{P_0}(\A_F)}\tilde f(n_0x)dn_0.
\end{equation}
Hence we get
\[
L^2_{\cu}(G(\Q)\bs G(\A_\Q)^1)\cong L^2_{\cu}(G_0(F)\bs G_0(\A_F)^1).
\]
The same holds for the residual spectrum. It follows that the counting
functions for $G$ and $G_0$ coincide. Thus \eqref{adelic-weyl-law3} and
\eqref{adelic-weyl-law4} hold for the counting function of $G=\Res_{F/\Q}(G_0)$.
This proves Theorem \ref{main-thm-adelic}.

Next we deduce Theorem \ref{main-thm} from Theorem \ref{main-thm-adelic}.
To this end we express the counting function in a different way. Let $\sigma
\in\Pi(\K_\infty)$. Let
$L^2(A_G\Gamma\bs G(\R)),\sigma)$ be defined as in \eqref{sigma-iso}.  Given
$\pi\in\Pi(G(\R))$ let $m_\Gamma(\pi)$ be the multiplicity with which
$\pi$ occurs in the regular representation $R_\Gamma$ in
$L^2(A_G\Gamma\bs G(\R))$. Let $L^2_{\di}(A_G\Gamma\bs G(\R))$
the span of all irreducible subrepresentations. Then
\begin{equation}
(L^2_{\di}(A_G\Gamma\bs G(\R))\otimes V_\sigma)^{\K_\infty}=
\bigoplus_{\pi\in\Pi(G(\R))}m_\Gamma(\pi)(\cH_\pi\otimes V_\sigma)^{\K_\infty}.
\end{equation}
For $\tau\in\Pi(G(\R))$ let $\lambda_\tau$ be the Casimir eigenvalue of $\tau$,
i.e., the eigenvalue of
$R_\Gamma(\Omega_{G(\R)})$ on $\cH_{\tau}$. Then it follows that
$(\cH_\tau\otimes V_\sigma)^{K_\infty}$ is an eigenspace of
$\Delta_\sigma=-R_\Gamma(\Omega_{G(\R)})$ with
eigenvalue $-\lambda_\tau$. It follows that
\begin{equation}\label{eigenv-count-fct}
N_{\Gamma,\di}(\lambda;\sigma):=\sum_{\substack{\pi\in\Pi_(G(\R))\\-\lambda_\pi\le\lambda}}
m_\Gamma(\pi)\dim(\cH_\pi\otimes V_\sigma)^{\K_\infty}.
\end{equation}
There are similar formulas for $N_{\Gamma,\cu}(\lambda,\sigma)$ and 
$N_{\Gamma,\res}(\lambda,\sigma)$

Now we establish the relation between the adelic
and real counting functions. Let $K_f\subset G(\A_f)$ be an open compact
subgroup. Let $\Gamma_i\subset G(\Q)$, $i=1,...,l$, be determined by
\eqref{adel-quot1}. 
The relation between the classical and adelic counting functions is described
by the following lemma.
\begin{lem}\label{lem-counting-fct}
Let $\sigma\in\Pi(\K_\infty)$. Then
\[
N^{K_f,\sigma}_{\di}(\lambda)=\sum_{i=1}^l N_{\Gamma_i,\di}(\lambda,\sigma)
\]
for $\lambda\ge 0$.
The same equality holds for the counting functions of the cuspidal and residual
spectrum.
\end{lem}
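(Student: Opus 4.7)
The strategy is to rewrite the adelic sum by grouping terms according to the archimedean component $\tau = \pi_\infty$, and then to invoke the multiplicity identity \eqref{multipl2}, which itself is a direct consequence of the $G(F_\infty)$-module isomorphism \eqref{g-modules-dis}
\[
L^2_{\di}(\Ag G(\Q)\bs G(\A))^{K_f}\cong \bigoplus_{i=1}^l L^2_{\di}(\Gamma_i\bs G(\R)^1).
\]
Since the representations contributing to $L^2(\AG G(\Q)\bs G(\A))$ are by construction trivial on $\AG$, and since $G(\R)=\AG\times G(\R)^1$, the Casimir eigenvalue $\lambda_{\pi_\infty}$ used in the adelic counting function (the eigenvalue of $\pi_\infty|_{G(\R)^1}$) coincides with the eigenvalue $\lambda_\tau$ attached to the matching $\tau\in\Pi(G(\R))$ on the classical side. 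This is the only compatibility check required before the combinatorial rearrangement can proceed.

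Concretely, I would start from \eqref{adelic-count-fct1} and collect all $\pi\in\Pi_{\di}(G(\A))$ whose archimedean component equals a fixed $\tau$:
\[
N^{K_f,\sigma}_{\di}(\lambda)=\sum_{\substack{\tau\in\Pi(G(\R))\\ -\lambda_\tau\le\lambda}}\dim\left(\cH_\tau\otimes V_\sigma\right)^{\K_\infty}\Bigg(\sum_{\substack{\pi\in\Pi_{\di}(G(\A))\\ \pi_\infty=\tau}}m(\pi)\dim\cH_{\pi_f}^{K_f}\Bigg).
\]
By \eqref{multipl2}, the parenthesised inner sum equals $\sum_{j=1}^l m_{\Gamma_j}(\tau)$, where $m_{\Gamma_j}(\tau)$ is the multiplicity of $\tau$ in $L^2_{\di}(\Gamma_j\bs G(\R)^1)$. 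Interchanging the two summations and using the description of $N_{\Gamma_j,\di}(\lambda,\sigma)$ recorded in \eqref{eigenv-count-fct} yields
\[
N^{K_f,\sigma}_{\di}(\lambda)=\sum_{j=1}^l\sum_{\substack{\tau\in\Pi(G(\R))\\ -\lambda_\tau\le\lambda}}m_{\Gamma_j}(\tau)\dim\left(\cH_\tau\otimes V_\sigma\right)^{\K_\infty}=\sum_{j=1}^l N_{\Gamma_j,\di}(\lambda,\sigma),
\]
which is the claim for the full discrete spectrum.

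For the cuspidal and residual counting functions, the argument is identical once \eqref{g-modules-dis} is replaced by \eqref{g-modules-cus} and \eqref{g-modules-res}, respectively. Both of those are again isomorphisms of $G(F_\infty)$-modules, so the corresponding versions of \eqref{multipl2} hold without modification, and the same regrouping and interchange of sums produce the desired identities $N^{K_f,\sigma}_{\cu}(\lambda)=\sum_i N_{\Gamma_i,\cu}(\lambda,\sigma)$ and $N^{K_f,\sigma}_{\res}(\lambda)=\sum_i N_{\Gamma_i,\res}(\lambda,\sigma)$. There is no real obstacle here; the lemma is essentially a repackaging of the spectral decomposition \eqref{g-modules-dis}--\eqref{g-modules-res}, and the only point requiring any care is the verification that Casimir eigenvalues on the two sides of these isomorphisms match, which reduces to the harmlessness of $\AG$ noted above.
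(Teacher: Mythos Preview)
Your proof is correct and follows essentially the same route as the paper: both arguments group the adelic sum by the archimedean component $\tau=\pi_\infty$, invoke \eqref{multipl2} (which rests on the $G(F_\infty)$-module isomorphism \eqref{g-modules-dis}), and then identify the result with the classical counting functions via \eqref{eigenv-count-fct}. Your write-up is in fact slightly more explicit than the paper's, since you spell out the Casimir-eigenvalue compatibility and the passage to the cuspidal and residual cases via \eqref{g-modules-cus} and \eqref{g-modules-res}.
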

\begin{proof} Given $\tau\in\Pi(G(\R))$, let $m(\tau)$ be the multiplicity
with which $\tau$ occurs in the regular representation
By \eqref{multipl1} with respect to $F=\Q$ we have
\[
\begin{split}
\sum_{\substack{\tau\in\Pi_{\di}(G(\R))\\-\lambda_\tau\le\lambda}}
m(\tau)(\cH_{\tau}\otimes V_\nu)^{\K_\infty}&=
\sum_{\substack{\tau\in\Pi_{\di}(G(\R))\\-\lambda_\tau\le\lambda}}
\sum_{\substack{\pi\in\Pi_{\di}(G(\A))\\\pi_\infty=\tau}}
m(\pi)\dim(\cH_{\pi_f}^{K_f})\dim(\cH_{\pi_\infty}\otimes
V_\nu)^{\K_\infty}\\
&=\sum_{\substack{\pi\in\Pi_{\di}(G(\A))\\-\lambda_{\pi_\infty}\le\lambda}}
m(\pi)\dim(\cH_{\pi_f}^{K_f})\dim(\cH_{\pi_\infty}\otimes V_\nu)^{\K_\infty}\\
&=N^{K_f,\sigma}_{\di}(\lambda).
\end{split}
\]
Combined with \eqref{multipl2} it follows that
\begin{equation}\label{iso-multipl}
\begin{split}
N^{K_f,\sigma}_{\di}(\lambda)=\sum_{\substack{\tau\in\Pi_{\di}(G(\R))\\
-\lambda_{\tau}\le\lambda}}m(\tau)(\cH_{\tau}\otimes V_\nu)^{K_\infty}&=\sum_{i=1}^l
\sum_{\substack{\tau\in\Pi(G(\R))\\-\lambda_\tau\le\lambda}}m_{\Gamma_i}(\tau)
(\cH_\tau\otimes V_\nu)^{K_\infty}\\
&=\sum_{i=1}^l N_{\Gamma_i,\di}(\lambda,\nu).
\end{split}
\end{equation}
\end{proof}

Let $K_f$ and $\Gamma_i$, $i=1,...,l$, be as above. By Lemma
\ref{lem-counting-fct} we have 
\begin{equation}\label{comp-count-fct5}
N^{K_f,\nu}_{\cu}(\lambda)=\sum_{i=1}^l N_{\Gamma_i,\cu}(\lambda;\nu).
\end{equation}
Furthermore, by \eqref{adel-quot1} we have
\begin{equation}\label{vol-comp}
\vol(X(K_f))=\vol(A_G G(\Q)\bs G(\A)/K_\infty K_f)=\sum_{i=1}^l
\vol(\Gamma_i\bs \widetilde X),
\end{equation}
where $\widetilde X=A_G\bs G(\R)/K_\infty$. Thus by \eqref{adelic-weyl-law3} we
obtain
\begin{equation}\label{sum-weyl-laws}
\lim_{\lambda\to\infty}\sum_{i=1}^l \frac{N_{\Gamma_i,\cu}(\lambda;\nu)}
{\lambda^{n/2}}=\sum_{i=1}^l\frac{\dim(\nu)\vol(\Gamma_i\bs\widetilde X)}
{(4\pi)^{n/2}\Gamma(n/2+1)}.
\end{equation}
Now we argue as in \cite[Sect. 6.3]{LV} \eqref{upper-bd}. By \eqref{upper-bd}
we have
\[
\limsup_{\lambda\to\infty}\frac{N_{\Gamma_i,\cu}(\lambda;\nu)}{\lambda^{n/2}}
\le \frac{\dim(\nu)\vol(\Gamma_i\bs\widetilde X)}{(4\pi)^{n/2}\Gamma(n/2+1)}
\]
for $i=1,...,l$. Combined with \eqref{sum-weyl-laws} it follows that
\begin{equation}\label{weyl-law-real}
N_{\Gamma_i,\cu}(\lambda;\nu)=\frac{\dim(\nu)\vol(\Gamma_i\bs \widetilde X)}{(4\pi)^{n/2}\Gamma(\frac{n}{2}+1)}\lambda^{n/2}+o(\lambda^{n/2})
\end{equation}
for $i=1,...,l$. 

Now let $\Gamma\subset G(\Q)$ be a congruence subgroup. 
By the definition of a congruence subgroup (see sect. \ref{sect-arithm-mfds})
there exists a compact open subgroup $K_f\subset G(\Q)$ such that
$\Gamma=K_f\cap G(\Q)$. Let $\Gamma_i$, $i=1,...,l$, be defined by
\eqref{congr-subgr}. Then $\Gamma=\Gamma_1$ and the first part of Theorem
\ref{main-thm} follows from \eqref{weyl-law-real}.

To establish the second part of Theorem \ref{main-thm}, we observe that by
Lemma \ref{lem-counting-fct} we have
\begin{equation}\label{comp-res-count-fct}
N_{\res}^{K_f,\nu}(\lambda)=\sum_{i=1}^l N_{\Gamma_i,\res}(\lambda,\nu)
\end{equation}
for $\lambda\ge 0$. Since each summand on the right hand side is $\ge 0$,
and $\Gamma=\Gamma_1$, \eqref{estim-res6} yields 
\begin{equation}
N_{\Gamma,\res}(\lambda,\nu)\le C(1+\lambda^{(n-1)/2}),\quad \lambda\ge 0.
\end{equation}
This completes the proof of Theorem \ref{main-thm}.


\begin{thebibliography}{MMM1}

\bibitem[Ar1]{Ar1} J. Arthur, {\it A trace formula for reductive groups. 
I. Terms associated to classes in $G(\Q)$}. 
Duke Math. J. {\bf 45} (1978), no. 4, 911 -- 952.

\bibitem[Ar2]{Ar2} J. Arthur, {\it A trace formula for reductive groups. II. 
Applications of a truncation operator}. Compositio Math. {\bf 40} (1980), 
no. 1, 87 --121. 

\bibitem[Ar3]{Ar3} J.\ Arthur, {\it The trace formula in invariant form}, 
Ann. of Math. (2) {\bf 114} (1981), no.\ 1, 1--74.

\bibitem[Ar4]{Ar4} J. Arthur, {\it Intertwining operators and residues I.
Weighted characters}, Journal of Funct. Analysis {\bf 84} (1989), 19 -- 84.

\bibitem[Ar5]{Ar5} J. Arthur, {\it An Introduction to the trace
eformula}, Clay Math. Proceedings, Vol. {\bf 4}, 2005.

\bibitem[Ar6]{Ar6} J. Arthur, {\it The local behavior of weighted
orbital integrals}, Duke Math. J. \textbf{56} (1988), no. 2, 223--293.

\bibitem[Ar7]{Ar7} J.\ Arthur, {\it A measure on the unipotent variety}, Can.
J. Math., Vol. XXXVII,  (1985), no.\ 6, 1237--1274.


\bibitem[Ar8]{Ar8} J. Arthur.
\newblock {\it On a family of distributions obtained from {E}isenstein series. 
{I}. {A}pplication of the {P}aley-{W}iener theorem}. \newblock 
{\em Amer. J. Math.},  (1982) 104(6):1243--1288.

 \bibitem[Ar9]{Ar9} J. Arthur.
\newblock {\it On a family of distributions obtained from {E}isenstein series. 
{II}.
  {E}xplicit formulas}. \newblock {\em Amer. J. Math.},  (1982) 
104(6):1289--1336.

\bibitem[Av]{Av} V. G. Avakumovi\'c, {\it \"Uber die Eigenfunktionen auf 
geschlossenen Riemannschen Mannigfaltig\-keiten.}  
Math. Z. {\bf 65} (1956), 327--344.

\bibitem[BM]{BM} D. Barbasch, H. Moscovici, {\it $L^2$-index and
the trace formula}, J. Funct. Analysis {\bf 53} (1983), 151--201.

\bibitem[Bo1]{Bo1} A. Borel, {\it Some finiteness properties of
adele groups
over number fields}, Inst. Hautes \'Etudes Sci. Publ. Math. {\bf 16} (1963),
5 -- 30.

\bibitem[Bo2]{Bo2} A. Borel, {\it Automorphic $L$-function}, In: Automorphic forms, representations and L-functions, Part 2, pp. 27–61, Proc. Sympos. Pure Math., XXXIII, Amer. Math. Soc., Providence, \\R.I., 1979. 

\bibitem[Bo3]{Bo3} A. Borel, {\it Stable real cohomology of
arithmetic groups}, Annales scient. de l'ÉNS {\bf 7}, no 2, (1974), 235--272.

\bibitem[BJ]{BJ} A. Borel and H. Jacquet, {\it Automorphic forms
and automorphic representations},\\ Proc. Symp. Pure Math. {\bf 33},
Part I, Amer. Math. Soc. (1979), 184--202.

\bibitem[BE]{BE} P. Borwein and T. Erdélyi, {\it Sharp extensions
of Bernstein's inequality to rational spaces}, Mathematika {\bf 43}(2) (1997),
413--423.

\bibitem[BG]{BG} A.\ Borel and H.\ Garland, {\it Laplacian and the discrete
  spectrum of an arithmetic group}, Amer. J. Math. {\bf 105} (1983),
  no.\ 2, 309--335.

\bibitem[CD]{CD} L.\ Clozel, P.\ Delorme, {\it Le th\'eor\`eme de {P}aley-{W}iener invariant pour les groupes de
 {L}ie r\'eductifs}, Invent. Math. {\bf 77} (1984), no.\ 3, 427--453.

\bibitem[CLL]{CLL} L. Clozel, J.-P. Labesse, and R. Langlands, {\it
Morning seminar on the trace formula}, mimeographed notes, IAS, Princeton,
1984.

\bibitem[CV]{CV} Y. Colin de Verdière, {\it Pseudo-laplaciens, II}.
Ann. Inst. Fourier {\bf 33} (1983), no. 2, 87–-113.

\bibitem[Do]{Do} H. Donnelly, {\it On the cuspidal spectrum for finite volume
symmetric spaces}, J. Differential Geometry {\bf 17} (1982), 239--253.

\bibitem[GL]{GL} S. Gelbart, E. Lapid, {\it Lower bounds for $L$-functions
at the edge of the critical strip}, Amer. J. Math. {\bf 128} (2006), no. 3,
619 -- 638.

\bibitem[He]{He} D.A. Hejhal, {\it The Selberg trace formula for $\PSL(2,\R)$}
Vol. 2, Lecture Notes Math. {\bf 1001}, Springer-Verlag, Berlin, 1983. 


\bibitem[Fe]{Fe} W. Feller, {\it An Introduction to Probability
Theory and its Applications}, 2nd ed. ,Vol. II, John Wiley \& Sons, New York,
1971.

\bibitem[FLM1]{FLM1} T. Finis, E. Lapid and W. M\"uller, {\it On the spectral side of Arthur's trace formula - absolute convergence}, Ann. of Math.,
vol. {\bf 174} (2011), no. 1, 173--195.

\bibitem[FLM2]{FLM2} T. Finis, E. Lapid, W. M\"uller, {\it Limit multiplicities for
principal congruence subgroups of $\GL(n)$ and $\SL(n)$}, J. Inst. Math.
Jussieu {\bf 14} (2015), no. 3, 589--638.

\bibitem[FL1]{FL1} T. Finis, E. Lapid, {\it On the analytic properties of 
intertwinig operators I: global normalizing factors},  Bull. Iranian Math. 
Soc. {\bf 43} (2017), no. 4, 235--277.

\bibitem[FL2]{FL2} T. Finis, E. Lapid, {\it On the remainder term of the Weyl
 law for congruence subgroups of Chevalley groups}, Duke Math. J. {\bf 170}
(2021), no. 4, 653–695. 

\bibitem[FL3]{FL3} T. Finis, E. Lapid, {\it On the continuity of the geometric
side of the trace formula.} Acta Math. Vietnam. {\bf 41} (2016), no. 3,
425--455.

\bibitem[FM]{FM} T. Finis, J. Matz, {\it On the asymptotics of Hecke
operators for reductive groups}, Math. Annalen {\bf 380} (2021), no. 3--4,
1037--1104.

\bibitem[HC1]{HC1} Harish-Chandra,{\it The Plancherel formula for reductive
p-adic groups}, In: Collected papers, IV, pp. 353-367, Springer-Verlag, 
New York-Berlin-Heidelberg, 1984.

\bibitem[HC2]{HC2} Harish-Chandra,{\it Automorphic Forms on Semisimple Lie
groups}, Lecture Notes in Math. Vol. {\bf 62}, Springer-Verlag,
Berlin-Heidelberg-New York, 1968.
  
\bibitem[Kn]{Kn} A.W. Knapp, {\it Representation theory of semisimple groups},
Princeton University Press, Princeton and Oxford, 2001.

\bibitem[La1]{La1} R.P. Langlands, {\it On the functional equations satisfied
by Eisenstein series}, Lecture Notes Math., Vol. {\bf 544}, 
Springer-Verlag, Berlin-New York, 1976.

\bibitem[La2]{La2} R. P. Langlands, {\it Problems in the theory of automorphic
forms}, Lectures in modern analysis and apllications, III, Springer, Berlin,
1970, pp. 18--61. Lect. Notes in Math., Vol {\bf 170}.

\bibitem[La3]{La3} R.P. Langlands, {\it Eisenstein series.} In: Proc.
Symp. Pure Math. Vol. {\it 9}, pp. 235--252, A.M.S., Providence, R.I., 1966.

\bibitem[LM]{LM} E. Lapid, W. M\"uller, {\it Spectral asymptotics for
arithmetic quotients of $\SL(n,\R)/\SO(n)$}. Duke Math. J. {\bf 149} (2009),
no. 1, 117--155. 

\bibitem[LaM]{LaM} H.B. Lawson and M.-L. Michelsohn, {\it Spin geometry},
Princeton Mathematical Series, {\bf 38}. Princeton University Press,
Princeton, NJ, 1989. 

\bibitem[LV]{LV} E. Lindenstrauss, A. Venkatesh, {\it Existence and Weyl's law
for spherical cusp forms}, GAFA {\bf 17} (2007), 220 -- 251.

\bibitem[Ma]{Ma} A. Maiti, {\it Weyl's law for arbitrary Archimedean type},
arXiv:2210.05986v1.

\bibitem[MM1]{MM1} J. Matz and W. M\"uller, {\it Analytic torsion of arithmetic quotients of the symmetric space {$\SL_n(\R)/\SO(n)$}}, Geom. Funct. Anal. {\bf 27} no.\ 6 (2017), 1378--1449.

\bibitem[MM2]{MM2} J. Matz and W. M\"uller, {\it Analytic torsion
for arithmetic locally symmetric manifolds and approximation of $L^2$-torsion},
 J. Funct. Anal. 284 (2023), no. 1, Paper No. 109727.

\bibitem[Mia]{Mia} R.J. Miatello, {\it The Minakshisundaram-Pleijel 
coefficients for the vector-valued heat kernel on compact locally symmetric 
spaces of negative curvature.}  Trans. Amer. Math. Soc. {\bf 260} (1980), 
1--33. 

\bibitem[Mil]{Mil} S.D. Miller, {\it On the existence and temperedness of cusp
forms for $\SL_3(\Z)$}. J. Reine Angew. Math. {\bf 533} (2001), 127--169. 

\bibitem[MW]{MW} C. M{\oe}glin, J.-L. Waldspurger, {\it Spectral Decomposition
and Eisenstein Series}, Cambridge Tracts in Mathematics, {\bf 113}.
Cambridge University Press, Cambridge, 1995. 

\bibitem[Mu1]{Mu1} W. M\"uller, {\it The trace class conjecture in the theory of
automorphic forms}, Annals of Math. {\bf 130}, no. 3, (1989), 473--529.

\bibitem[Mu2]{Mu2} W. M\"uller, {\it On the spectral side of the Arthur trace 
formula}, GAFA {\bf 12} (2002), 669 -- 722.

\bibitem[Mu3]{Mu3} W. M\"uller, {\it Weyl's law for the cuspidal spectrum of
$\SL_n$}, Annals of Math. {\bf 165} (2007), 273--333.

\bibitem[MS]{MS} W.\ M\"uller, B.\ Speh, {\it Absolute convergence
of the spectral side of the Arthur trace formula for $\GL(n)$
With an appendix by E.M.\ Lapid.}  Geom. Funct. Anal. {\bf 14} (2004), no.\ 1, 
58--93.

\bibitem[Sa1]{Sa1} P. Sarnak, {\it On cusp forms}, In: The Selberg trace
formula and related topics, Contemp. Math. {\bf 53}, Amer. Math. Soc.,
Providence, RI, 1986, pp: 393--407.

\bibitem[Sa2]{Sa2} P. Sarnak, {\it Spectra of hyperbolic surfaces}, Bull. Amer.
Math. Soc. (N.S.) {\bf 40} (2003), no. 4, 441--478.

\bibitem[Se1]{Se1} A. Selberg, {\it Harmonic analysis}, in ''Collected Papers'',
Vol. I, Springer-Verlag, Berlin-Heidelberg-New York (1989), 626--674.

\bibitem[Sh1]{Sh1} F. Shahidi, {\it On certain $L$-functions}, Amer. J. Math.
{\bf 103} (1981), 297--356.

\bibitem[Sh2]{Sh2} F. Shahidi, {\it Eisenstein series and automorphic L-functions}, American Mathematical Society Colloquium Publications, 58.
American Mathematical Society, Providence, RI, 2010. 

\bibitem[Si]{Si} A.J.\ Silberger,
{\it Harish-Chandra's Plancherel Theorem for $p$-adic Groups}, Trans. Amer. Math. Soc. {\bf 348} (1996), no.\ 11, 4673 -- 4686.

\bibitem[We]{We} A. Weil, {\it Adeles and algebraic groups},
Birkh\"auser, Boston, Basel, Stuttgart, 1982.


\end{thebibliography}
\end{document}